\newtheorem{Proposition}{Proposition}[section]
\newtheorem{Lemma}[Proposition]{Lemma}
\newtheorem{Definition}[Proposition]{Definition}
\newtheorem{Corollary}[Proposition]{Corollary}
\newtheorem{Theorem}[Proposition]{Theorem}
\newtheorem{Claim}[Proposition]{Claim}
\theoremstyle{definition}
\newtheorem{Remark}[Proposition]{Remark}
\newtheorem{Example}[Proposition]{Example}
\renewcommand{\AA}{\mathbb{A}}
\newcommand{\BB}{k}
\newcommand{\Base}{\mathfrak{B}}
\newcommand{\GG}{\mathbb{G}}
\newcommand{\LL}{\mathbb{L}}
\newcommand{\MM}{\mathfrak{M}}
\newcommand{\PP}{\mathbb{P}}
\newcommand{\QQ}{\mathbb{Q}}
\renewcommand{\SS}{\mathfrak{M}}
\newcommand{\TT}{\mathfrak{N}}
\newcommand{\WW}{\mathcal{W}}
\newcommand{\XX}{\mathcal{X}}
\newcommand{\YY}{\mathcal{Y}}
\newcommand{\ZZ}{\mathcal{Z}}
\newcommand{\muu}{\hat{\mu}}
\newcommand{\Nn}{N}
\DeclareMathOperator{\KK}{K}
\DeclareMathOperator{\Var}{Var}
\DeclareMathOperator{\Sch}{Sch}
\DeclareMathOperator{\St}{St}
\DeclareMathOperator{\Sta}{St^{aff}}
\DeclareMathOperator{\Sp}{Sp}
\DeclareMathOperator{\Gl}{Gl}
\DeclareMathOperator{\gl}{gl}
\DeclareMathOperator{\BGl}{BGl}
\DeclareMathOperator{\Spec}{Spec}
\DeclareMathOperator{\Sym}{Sym}
\DeclareMathOperator{\SSym}{\Sym}
\DeclareMathOperator{\Symm}{\mathcal{S}ym}
\DeclareMathOperator{\SSymm}{\Symm}
\DeclareMathOperator{\cl}{cl}
\DeclareMathOperator{\Char}{char}
\DeclareMathOperator{\Ho}{H}
\DeclareMathOperator{\DER}{D}
\DeclareMathOperator{\Rep}{Rep}
\DeclareMathOperator{\Vect}{Vect}
\DeclareMathOperator{\Lo}{\mathcal{L}}
\DeclareMathOperator{\Crit}{Crit}
\DeclareMathOperator{\Hom}{Hom}
\DeclareMathOperator{\Mat}{Mat}
\DeclareMathOperator{\Span}{span}
\DeclareMathOperator{\Hilb}{Hilb}
\DeclareMathOperator{\tr}{tr}
\DeclareMathOperator{\lcm}{lcm}
\DeclareMathOperator{\CF}{CF}
\title{Motivic DT-invariants for the one loop quiver with potential}
\author{Ben Davison, Sven Meinhardt}
\begin{document}

\maketitle

\begin{abstract}
In this paper we compute the motivic Donaldson--Thomas invariants for the quiver with one loop and any potential. As the presence of arbitrary potentials requires the full machinery of $\muu$-equivariant motives, we give a detailed account of them. In particular, we will prove two results for the motivic vanishing cycle which might be of importance not only in Donaldson--Thomas theory. 
\end{abstract}

\tableofcontents

\section{Introduction}

Donaldson--Thomas invariants were first introduced by R.\ Thomas \cite{Thomas1} to give an alternative way to count (irreducible) curves on a Calabi--Yau 3-fold. A few years later D.\ Joyce \cite{JoyceI},\cite{JoyceII},\cite{JoyceIII},\cite{JoyceMF},\cite{JoyceIV} and Y. Song \cite{JoyceDT} generalized the definition to much more general situations using results of K.\ Behrend \cite{Behrend}. Shortly after this M.\ Kontsevich and Y.\ Soibelman \cite{KS1},\cite{KS3} came up with an alternative definition which turns out to be equivalent to the one given by Joyce (see \cite{Meinhardt3}). It has been realized subsequently by several people that the Donaldson--Thomas invariants should be of motivic origin, in other words, they should be Euler characteristics of certain motives. Among several papers giving a definition of motivic Donaldson--Thomas invariants (at least in special cases), we will basically follow \cite{Meinhardt3}, but the reader is also encouraged to consult \cite{BBS},\cite{KS2},\cite{Mozgovoy1}. There are only a few nontrivial examples, where motivic Donaldson--Thomas invariants have been computed for all classes in the Grothendieck group of the Calabi--Yau 3-category in question, though let us mention the papers \cite{BBS} and \cite{MMNS}. We will start from the rather trivial example of coherent sheaves of dimension zero on $\AA^1_k$ with $k=\bar{k}$ and $\Char k=0$ or, equivalently, finite dimensional representations of $k[t]$, i.e.\ finite dimensional representations of the one loop quiver. The corresponding category is of homological dimension one but is also the heart of a bounded t-structure in a Calabi--Yau 3-category. The motivic Donaldson--Thomas invariants for this example are well-known and given by 
\[ \Omega_n=\begin{cases}
             \LL^{1/2} & \mbox{ for } n=1 \mbox{ and} \\ 0 & \mbox{ else},
            \end{cases} \]
where $\LL^{1/2}=1-[\mu_2]$ denotes a square root of the motive $\LL$ of $\AA^1_k$ in $\KK^{\muu}(\Var/k)$ having Euler characteristic $-1$. Moreover, $[\mu_d]$ denotes the $\muu$-equivariant motive of the set of $d$-th roots of unity with obvious $\muu$-action. 
We will modify this example by considering a nonzero ``potential'' $W\in k[t]$ and by requiring that the sheaves are annihilated by multiplication with $W'=dW/dt$, i.e.\ are supported on $\Spec (k[t]/(W'))$. In the language of representations the operator $A$ given by $t$ has to satisfy $W'(A)=0$. This class of examples is not of great importance but is nevertheless interesting as it provides the first case in which the full machinery of $\muu$-equivariant motives has to be applied. Here is our main result.  
\begin{Theorem} \label{maintheorem} For $W\in k[T]$ let $W'=c\prod_{i=1}^r(t-a_i)^{d_i-1}$ be the prime decomposition of $W'$ into linear factors with $c\in k^\times$, $1< d_i\in \mathbb{N}$ and $a_i\in k$ for all $1\le i\le r$. Define the Donaldson--Thomas invariants $\Omega_{\vec n}\in \KK^{\muu}(\Sta/k)$ for any $r$-tuple $(n_1,\ldots,n_r)\in \mathbb{N}^r$ as in section 6.  Then
\[ \Omega_{\vec n}= \begin{cases} 
              \LL^{-1/2}\bigl(1- [\mu_{d_i}]\bigr) &\mbox{ for } \vec n=\vec e_i\mbox{ being the i-th basis vector } (1\le i\le r),  \\ 0 & \mbox{ otherwise}.
             \end{cases}
\]
In particular, $\Omega_{\vec n}$ is in the image of $\KK^{\muu}(\Var/k)[\LL^{-1/2}]$ in $\KK^{\muu}(\Sta/k)$.
\end{Theorem}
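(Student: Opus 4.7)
The plan is to exploit the Chinese Remainder decomposition
\[ k[t]/(W') \;\cong\; \prod_{i=1}^r k[t]/(t-a_i)^{d_i-1}, \]
under which the critical locus of $\tr W$ on the moduli stack $\MM_n = [\mathfrak{gl}_n/\Gl_n]$ of $n$-dimensional representations of the one-loop quiver stratifies by the dimension vector $\vec n = (n_1,\ldots,n_r)$ recording the dimensions of the generalized eigenspaces at the $a_i$, for $n = \sum n_i$. I would first verify that this is the indexing used in the definition of $\Omega_{\vec n}$ in Section~6, and that the DT invariant is obtained from the motivic vanishing cycle $\phi_{\tr W}$ on $\MM_n$ via the integration map and plethystic logarithm of \cite{Meinhardt3}.

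The next step is a motivic Thom--Sebastiani theorem, which I expect to be one of the two general results on $\phi$ promised in the abstract. Applied in a Luna-slice neighbourhood of a critical point of type $\vec n$, it yields a product decomposition of $\MM_n$ in which $\tr W$ splits as the sum of the local potentials $\tr W_i$ coming from each eigenvalue, and the motivic vanishing cycle factors as the product of the corresponding single-eigenvalue vanishing cycles. This promotes to a factorization
\[ Z(\vec x) \;=\; \prod_{i=1}^r Z_i(x_i) \]
of the generating series of motivic vanishing-cycle classes, where $Z_i(x_i) \in \KK^{\muu}(\Sta/k)[\![x_i]\!]$ is the analogous series for the single-eigenvalue problem at $a_i$. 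Since the plethystic logarithm converts products into sums, $\Omega_{\vec n}$ vanishes whenever $\vec n$ is supported on more than one index, and the problem reduces to computing the single-eigenvalue DT invariants $\Omega^{(i)}_n$.

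For each $i$, the second general $\phi$-result, which I expect to be an invariance under polynomial change of variables, would let me replace $W_i$ by its pure monomial leading term $t^{d_i}/d_i$, the nonzero scalar disappearing up to a $\muu$-equivariant twist to be tracked. The case $n=1$ is then a direct calculation: on $\MM_1 = [\AA^1_k/\GG_m]$ with potential $t^{d_i}/d_i$ the motivic vanishing cycle at the origin is $1-[\mu_{d_i}]$ up to sign, by the standard formula for an $A_{d_i-1}$-singularity, and the $\LL^{-1/2}$ twist is provided by the normalization of the integration map on one-dimensional representations, giving $\Omega^{(i)}_1 = \LL^{-1/2}(1-[\mu_{d_i}])$.

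The main obstacle will be showing that $\Omega^{(i)}_n = 0$ for all $n \ge 2$. My plan is to exploit the scaling action $A \mapsto \lambda A$ of an additional $\GG_m$ on $\mathfrak{gl}_n$, under which $\tr(A^{d_i})$ is quasi-homogeneous of weight $d_i$; together with the behaviour of motivic vanishing cycles for quasi-homogeneous functions, this should express each coefficient of $Z_i$ as an explicit motive of the Jordan-stratified variety $\{A \in \mathfrak{gl}_n : A^{d_i-1} = 0\}$ twisted by appropriate roots-of-unity classes. Feeding these into the motivic plethystic logarithm of \cite{Meinhardt3} should then collapse all coefficients of degree $\ge 2$, but verifying this cleanly in $\KK^{\muu}(\Sta/k)[\![x_i]\!]$ with the $\muu$-equivariance data preserved is the delicate combinatorial step that will absorb most of the real work.
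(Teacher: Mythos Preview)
Your overall architecture---factor over the critical points via a product formula, then reduce each local factor to the monomial potential $t^{d_i}$ by a formal change of variable---matches the paper. The product formula is obtained there not from Luna slices and Thom--Sebastiani but from the trivial identity $[\MM\to\MM]=[\MM^{(1)}\to\MM]\star\cdots\star[\MM^{(r)}\to\MM]$ in the Hall algebra together with the integration map; and the reduction to $t^{d_i}$ is done on arc spaces (your ``invariance under polynomial change of variables'' is a short lemma, not one of the two advertised results).

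The real gap is your plan for the homogeneous case $W=t^d$ in degree $n\ge 2$. You propose to compute $\int_{\Mat_k(n,n)}\phi_{\tr A^d}$ directly via the $\GG_m$-scaling and then take a plethystic logarithm; this would require understanding the $\muu$-motives of $\{A:\tr A^d=1\}$ for all $n$, and the combinatorics you anticipate is genuinely unpleasant. The paper bypasses this entirely. The two missing ideas are: (i) a Reineke-type wall-crossing identity in the Hall algebra (Lemma~\ref{wallcrossing}) which replaces the ratio $\Phi_W(\LL T)/\Phi_W(T)$ by a generating series over the Hilbert schemes $\Hilb^n\AA^1_k\cong\Sym^n\AA^1_k\cong\AA^n_k$, and (ii) the compatibility of the \emph{equivariant} vanishing cycle with $\Sym^n$, namely $\LL^{n/2}\int_{\Sym^n\AA^1}\phi^{eq}_{\Sym^n_+(W)}=\sigma^n(\LL^{1/2}\int_{\AA^1}\phi^{eq}_W)=\sigma^n(1-[\mu_d])$ (Proposition~\ref{basicprop}(4)). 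Together with Theorem~\ref{theorem1} equating $\phi$ and $\phi^{eq}$ for the $\GG_m$-action by scaling, this gives $\Phi_W(\LL T)/\Phi_W(T)=\Sym((1-[\mu_d])T)$ in one stroke, and comparing with $\Sym\bigl(\sum_n\frac{\LL^n-1}{\LL^{1/2}-\LL^{-1/2}}\Omega_nT^n\bigr)$ forces $\Omega_n=0$ for $n\ge2$ without any combinatorics. The ``second general $\phi$-result'' you were looking for is precisely this $\Sym^n$-compatibility (Theorem~\ref{symvanishing}), not a change-of-variables statement.
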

By taking the Euler characteristic of the nontrivial $\Omega_{\vec n}$, we end up with the Milnor numbers $d_i-1$ of $W$ at the critical points $a_i\in \AA^1_k$. \\
Using a standard trick for computing Donaldson--Thomas invariants (cf.\ Lemma \ref{wallcrossing}) and $\Hilb^n \AA^1_k=\Sym^n \AA^1_k$, the main theorem would actually be an easy application of the following claim concerning the commutativity of the motivic vanishing cycle and the functor $\Sym^n$.  
\begin{Claim}  
Let $f:X\longrightarrow \AA^1_k$ be a regular map on a smooth variety of dimension $d$. Denote by $X_0$ the fiber over zero. For any $n\ge 0$ there is an obvious map $\SSym_+^n(f):\Sym^n X \xrightarrow{\Sym^n(f)} \Sym^n\AA^1_k\xrightarrow{+} \AA^1$. If we consider the motivic vanishing cycles $\phi_f\in \KK^{\muu}(\Var/X_0)[\LL^{-1/2}]$ and $\phi_{\SSym_+^n(f)}|_{\Sym^n X_0}\in \KK^{\muu}(\Var/\Sym^n X_0)[\LL^{-1/2}]$ as elements of the $\lambda$-ring $\KK^{\muu}(\Var/\Sym X_0)[\LL^{-1/2}]$ via the obvious inclusions, we get for any $n\ge 0$ the equation
\[ \LL^{nd/2} \phi_{\SSym_+^n(f)}|_{\Sym^n X_0} = \sigma^n ( \LL^{d/2} \phi_f) \]
with $\sigma^n(-)$ $(n\in \mathbb{N})$ defining the $\lambda$-ring structure.
\end{Claim}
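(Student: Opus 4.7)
The plan is to deduce the identity from two ingredients: the Thom--Sebastiani theorem for motivic vanishing cycles, and a descent argument along the finite $S_{n}$-Galois quotient $\pi\colon X^{n}\to\Sym^{n}X$.

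First, I would apply the iterated motivic Thom--Sebastiani formula to the $S_{n}$-invariant map $f^{\boxplus n}\colon X^{n}\to\AA^{1}_{k}$, $(x_{1},\dots,x_{n})\mapsto\sum_{i}f(x_{i})$. On the closed subscheme $(X_{0})^{n}\subset(f^{\boxplus n})^{-1}(0)$ the iterated formula reads, in the $\LL^{d/2}$-normalised form,
\[
\LL^{nd/2}\,\phi_{f^{\boxplus n}}\big|_{(X_{0})^{n}}=\bigl(\LL^{d/2}\phi_{f}\bigr)^{\boxtimes n}\in \KK^{\muu}(\Var/(X_{0})^{n})[\LL^{-1/2}].
\]
The right-hand side is canonically $S_{n}$-equivariant. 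Pushing it forward along the restriction $\pi'\colon(X_{0})^{n}\to\Sym^{n}X_{0}$ of $\pi$ computes, by the very definition of the $\lambda$-ring structure on $\KK^{\muu}(\Var/\Sym X_{0})[\LL^{-1/2}]$ in terms of symmetric products of representing varieties, the class $\sigma^{n}\bigl(\LL^{d/2}\phi_{f}\bigr)$.

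The second step is to identify this same pushforward with $\LL^{nd/2}\,\phi_{\SSym_{+}^{n}(f)}\big|_{\Sym^{n}X_{0}}$. Since $\SSym_{+}^{n}(f)\circ\pi=f^{\boxplus n}$ and $\pi$ is finite, the content is the compatibility of the motivic vanishing cycle with the $S_{n}$-quotient. On the open stratum of pairwise distinct tuples, $\pi$ is étale and the identification is immediate from étale base change for $\phi$. Along the diagonal strata I would stratify $\Sym^{n}X_{0}$ by partition type $\lambda=(\lambda_{1},\dots,\lambda_{\ell})$ and, via a Luna-type étale slice for the $S_{n}$-action, reduce on each stratum to the analogous compatibility for the stabiliser $\prod_{j}S_{\lambda_{j}}$, which is handled inductively by iterating the same argument inside each factor $\Sym^{\lambda_{j}}X$; the case $n=1$ is tautological. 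Combining the two steps yields the required equality.

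I expect the main obstacle to be precisely this descent along the ramified locus of $\pi$: motivic vanishing cycles are not a priori compatible with quotients by finite groups having nontrivial stabilisers, so the Luna-slice reduction has to be carried out carefully in the $\muu$-equivariant setting, keeping track both of the monodromy $\muu$-action and of the $\LL^{-1/2}$-twist. If a direct descent proves intractable, a fallback is to evaluate both sides via the Denef--Loeser arc-space presentation of $\phi$ and to match them using the motivic power structure on $\KK^{\muu}(\Var)$ together with the corresponding generating-series identity for symmetric powers.
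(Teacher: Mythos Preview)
There is a genuine gap here, and in fact the paper explicitly states that it was \emph{unable} to prove this Claim: immediately after stating it, the authors write that $\phi_{\SSym_+^n(f)}$ is only defined when $\Sym^n X$ is smooth, which it generally is not, and that ``unfortunately, we were not able to prove this result even for smooth symmetric products.'' What they do prove is the stacky variant (their Theorem on $\Symm^n X = X^n/S_n$), by a completely different route: they compare the arc-space generating series $Z^{eq}_{\SSymm_+^n(f)}(T)$ and $Z^{eq}_f(T)$ using the key identity $\Lo_p(\Symm^n X)=\Symm^n \Lo_p(X)$ together with the equivariant $\sigma^n$-compatibility of the naive vanishing cycle, and then pass to the value at $T=\infty$ via a rationality lemma. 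Your ``fallback'' is in spirit this method, but note that it only goes through for the quotient \emph{stack}: for the coarse space one does not have $\Lo_p(\Sym^n X)=\Sym^n \Lo_p(X)$ along the diagonals, which is exactly where your argument would need it.

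Two specific problems with your main line. First, your identification of $\sigma^n(\LL^{d/2}\phi_f)$ with the $S_n$-pushforward of $(\LL^{d/2}\phi_f)^{\boxtimes n}$ is not correct for the exotic $\muu$-equivariant $\lambda$-structure used here: by the paper's explicit description, $\sigma^n$ on a class with $\mu_d$-action is built from the Fermat-type varieties $J^d_{n,0}$ and $J^d_{n,1}$, not merely from the naive symmetric product, so Thom--Sebastiani plus pushforward does not produce $\sigma^n$ in $\KK^{\muu}$. Second, your descent step is precisely the obstruction the authors could not overcome. A Luna-type slice for the $S_n$-action gives \'etale-local product decompositions of $X^n$, not of $\Sym^n X$; on the singular diagonal strata the motivic vanishing cycle of $\SSym_+^n(f)$ is not even defined, and where it is, there is no known compatibility of $\phi$ with finite ramified quotients in $\KK^{\muu}(\Var/-)$. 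Your inductive reduction to $\prod_j S_{\lambda_j}$ just reproduces the same difficulty inside each $\Sym^{\lambda_j}X$. In short, the step you flag as ``the main obstacle'' is not a technicality to be carried out carefully; it is the whole content of the Claim, and it remains open.
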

The attentive reader might have realized that $\phi_{\SSym_+^n(f)}$ is only defined for smooth varieties $\Sym^n X$, while $\Sym^n X$ is in general not smooth. Unfortunately, we were not able to prove this result even for smooth symmetric products. There is, however, a similar form with $\Sym^n X$ replaced by the smooth Deligne--Mumford stack $\Symm^n X:=X^n/S_n$. We will prove this ``stacky'' version in section 5 which can also be seen as a generalization of the famous Thom--Sebastaini Theorem. \\
Our strategy to prove this result is by relating the vanishing cycle to the (the integral over) a much simpler vanishing cycle functor defined  only for $\GG_m$-equivariant functions. We will prove a lot of nice properties for this ``equivariant'' vanishing cycle functor including the equivariant version of the above theorem (regardless of smoothness of $\Sym^n X$) and the philosophy is that all properties of the usual vanishing cycle should actually follow from their counterparts in the equivariant setting. This will be shown in section 5. The close relationship culminates in the following theorem saying that both versions of the vanishing cycle coincide (after integration) for $\GG_m$-equivariant functions. The proof has been sketched to the second author by Dominic Joyce in a private communication. All credits are, therefore, attributed to him and all errors to us.
\begin{Theorem} \label{theorem0} 
Let $X$ be a smooth variety with $\GG_m$-action such that every point has an open neighborhood isomorphic to $\AA^r_k\times Z$ with $\GG_m$ acting via $g\cdot(v_1,\ldots,v_r,z)=(gv_1,\ldots, gv_r,z)$ for all $g\in \GG_m, (v_1,\ldots,v_r)\in \AA^r_k$ and $z\in Z$. Let $f:X \rightarrow \AA_k^1$ be a $\GG_m$-equivariant morphism of degree $d>0$, i.e.\ $f(g\cdot x)=g^df(x) \;\forall g\in\GG_m, x\in X$ and let $\muu$ act on $f^{-1}(1)$ via $\mu_d$ and trivially on $f^{-1}(0)$. Then
 \[ \int_{X} \phi_f = \LL^{-\frac{\dim X}{2}}\bigl([f^{-1}(0)] - [f^{-1}(1)] \bigr) \;\;\mbox{in} \;\KK_0^{\muu}(\Var/k)[\LL^{-1/2}].\]
\end{Theorem}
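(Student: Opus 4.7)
The plan is to compute the motivic nearby fibre $\psi_f$ of $f$ explicitly, using the $\GG_m$-action as a source of monodromy, and then extract $\phi_f$ from the defining relation $\LL^{\dim X/2}\phi_f = [X_0]-\psi_f$. Additivity of both sides along locally closed decompositions of $X$ reduces the problem to each stratum of a $\GG_m$-equivariant constructible stratification; by the local hypothesis each stratum is $\GG_m$-equivariantly $\AA^r_k\times Z$, and a further stratification of $Z$ eventually brings me to the case $X=\AA^r_k$ and $f\in k[v_1,\ldots,v_r]$ a homogeneous polynomial of degree $d$.

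The geometric content of the theorem is then that the map
\[\GG_m\times f^{-1}(1)\longrightarrow X\setminus X_0,\qquad (g,y)\longmapsto g\cdot y,\]
is a principal $\mu_d$-bundle with $\mu_d$ acting diagonally by $\zeta\cdot(g,y)=(\zeta g,\zeta^{-1}\cdot y)$ (note that the $\GG_m$-action restricted to $\mu_d$ preserves $f^{-1}(1)$ since $f(\zeta\cdot y)=\zeta^d f(y)=f(y)$). Hence the family $f\colon X\setminus X_0\to\GG_m$ is the $\mu_d$-quotient of the trivial family $\GG_m\times f^{-1}(1)\to\GG_m$, obtained by descent along the Kummer cover $\GG_m\to\GG_m$, $t\mapsto t^d$, and the geometric monodromy around $0\in\AA^1_k$ is precisely the $\mu_d$-action on $f^{-1}(1)$.

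To promote this to a motivic identity I would choose a $\GG_m$-equivariant embedded log resolution $\pi\colon\tilde X\to X$ of $(X,X_0)$, writing $\pi^{-1}(X_0)=\sum_i N_i E_i$, and apply the Denef--Loeser formula
\[\psi_f=\sum_{\emptyset\neq I}(1-\LL)^{|I|-1}\bigl[\tilde E_I^\circ\to X_0\bigr],\]
with $\tilde E_I^\circ\to E_I^\circ=\bigcap_{i\in I}E_i\setminus\bigcup_{j\notin I}E_j$ the standard $\mu_{d_I}$-cover, $d_I=\gcd_{i\in I}N_i$. The $\GG_m$-equivariance rigidifies both the multiplicities $N_i$ and the $\mu_{d_I}$-covers, presenting each $\tilde E_I^\circ$ in terms of the Galois data of the preceding paragraph. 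An inclusion--exclusion on the stratification by the $E_I^\circ$ matches the sum with the decomposition $X\setminus X_0\cong(\GG_m\times f^{-1}(1))/\mu_d$; all intermediate terms cancel, yielding $\int_X\psi_f=[f^{-1}(1)]$ in $\KK_0^{\muu}(\Var/k)$ with the asserted $\mu_d$-action. Substituting into $\LL^{\dim X/2}\phi_f=[X_0]-\psi_f$ and integrating produces the stated formula.

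The main obstacle is this last motivic step: performing the Denef--Loeser computation $\GG_m$-equivariantly and verifying that the sum telescopes onto the single smooth fibre $[f^{-1}(1)]$. All the rigidity of the hypothesis --- linear local model, $\GG_m$-equivariant resolution, Kummer descent --- is what makes this collapse possible, and it is precisely where the elementary geometric picture is promoted to the required motivic identity.
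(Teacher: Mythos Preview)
Your outline follows the same broad strategy as the paper --- compute $\phi_f$ via an embedded resolution and the Denef--Loeser formula, then argue that all contributions except the one coming from the generic fibre cancel --- but there are two genuine gaps.

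First, the reduction step is not valid as stated. You cannot pass to a locally closed \emph{stratification} of $X$: the motive $\phi_f$ is defined via arc spaces of the smooth ambient $X$, and for a closed stratum $S\subset X$ there is no reason for $\int_S\phi_f$ to equal $\int_S\phi_{f|_S}$ (the latter may not even make sense if $S$ is singular). Only for \emph{open} $U\subset X$ does one have $\int_U\phi_f=\int_U\phi_{f|_U}$. The paper handles this by taking an open cover by sets of the form $U=\AA^r_k\times Z$, observing that any nonempty finite intersection of such opens is again of this form (because the retraction $x\mapsto\lim_{g\to 0}g\cdot x$ preserves each $U$), and then running inclusion--exclusion on the open cover. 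In particular one never reduces to $X=\AA^r_k$: the factor $Z$ stays, since $f$ may vary nontrivially over $Z$.

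Second --- and this is the heart of the matter, as you yourself flag --- the cancellation in the Denef--Loeser sum does not follow from a generic $\GG_m$-equivariant resolution plus ``inclusion--exclusion''. The paper chooses a very specific resolution: first blow up the fixed locus $Z\subset X=\AA^r_k\times Z$, so that the exceptional divisor $E_0$ has multiplicity $d$ and the blow-up is the total space of $N_{E_0}$; then resolve the remaining components \emph{inside} $E_0$. This forces every other component $E_i$ ($i\ge 1$) to be the pullback of $N_{E_0}$ over a divisor $D_i\subset E_0$, whence for every nonempty $I\subset\{1,\dots,m\}$ one has $N_I=N_{I\cup\{0\}}$ and $f_I=f_{I\cup\{0\}}$. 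The contributions of $I$ and $I\cup\{0\}$ therefore come with opposite signs and equal underlying varieties; the remaining subtlety is that the two $\hat\mu$-actions (via $\mu_{m_I}$ and $\mu_{m_I+d}$) differ, and one must check --- by an explicit torus coordinate change reducing both to the action of $\mu_{\gcd(m_i:i\in I\cup\{0\})}$ on the same Galois cover of $E^\circ_{I\cup\{0\}}$ --- that the classes nonetheless agree in $\KK^{\hat\mu}(\Var/k)$. After this pairing only $I=\{0\}$ survives, and $f_{\{0\}}^{-1}(1)=f^{-1}(1)$ with its natural $\mu_d$-action. Your Kummer-descent picture is the correct geometric intuition for why the answer should be $[f^{-1}(1)]$, but the actual cancellation requires this tailored resolution and the monodromy comparison; an arbitrary equivariant log resolution will not visibly telescope.
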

This result should be compared to a similar result in \cite{BBS}, where the assumptions on the action are less strict but the assumptions on $f$ are much more restrictive. We actually believe that the theorem remains true if we allow a $\GG_m$-action on $\AA^r_k$ with any positive weights $0<w_1,\ldots,w_r$. \\
Having this theorem at hand, one only has to spell out the definitions and to prove a kind of perturbation lemma allowing us to reduce ourselves to homogeneous potentials in order to give a proof of Theorem \ref{maintheorem}. \\

The strategy of the paper is as follows. In section 2 and 3 we recall the basic definitions and properties of motives. In particular, we establish the $\lambda$-ring structure on the ring $\KK_0(\Var/k)$ of motives. In section 4 we construct an exotic $\lambda$-ring structure on $\KK_0^{\muu}(\Var/k)$ following ideas of Kontsevich and Soibelman \cite{KS1}. Moreover, we introduce a naive version of (the integral over) the vanishing cycle sheaf for $\GG_m$-equivariant functions as mentioned above, which is basically the expression on the right hand side of Theorem \ref{theorem0}. In section 5 we present a new approach to the vanishing cycle sheaf closely related to that of Denef and Loeser \cite{DenefLoeser2},\cite{DenefLoeser1} which allows us to reduce the main properties to their counterparts in the equivariant case proven in section 4. Section 5 closes with the proof of Theorem \ref{theorem0}. The last section concerns the one loop quiver and its theory of (motivic) Donaldson--Thomas invariants. By proving a few simple lemmas we can finally show that our main theorem holds.\\

\textbf{Acknowledgments.} The first author would like to thank Bal\'{a}zs Szendr\H{o}i for teaching him about motivic vanishing cycles, and for his support, in general, over the last few years.  The first author's work was conducted partly during stays at the University of British Columbia, where he would like to thank everyone, especially Jim Bryan, for providing wonderful research conditions, and secondly at the University of Strathmore, Nairobi, which he would also like to thank for providing an excellent environment for Mathematics.  Finally he would like to thank Ezra Getzler, for stimulating conversations about lambda rings and motives.  As mentioned earlier, the second author is more than grateful to Dominic Joyce, not only for sharing his ideas leading to the proof of Theorem \ref{theorem0}, but also for his ceaseless patience in teaching and explaining the deep theory of motivic Donaldson--Thomas invariants to him. Without this, the following paper would not exist at all. The second author would also like to thank Maxim Kontsevich for inviting him to the IHES and for some helpful discussions clarifying the ``big picture''. The second author's research was supported by the EPSRC grants EP/D07790/1 and EP/G068798/1, by the IHES, notably by the Klaus Tschira foundation, and finally by the University of Bonn and the SFB/TR 45.\\

\section{Motives}

We will start by recalling a few standard facts about the group of (naive) motives. Let $\SS$ be an Artin stack locally of finite type over some field $k=\bar{k}$ of characteristic zero.

\begin{Definition}
Define $\KK_0(\Var/\SS)$ to be the abelian group generated by isomorphism classes of morphisms $X\xrightarrow{f} \SS$ of finite type with $X$ being a reduced separated scheme over $k$ subject to the relation
\[ [X\xrightarrow{f} \SS ] \sim [Z\xrightarrow{f|_Z} \SS ] + [X\setminus Z \xrightarrow{f|_{X\setminus Z}} \SS] \]
for any closed and reduced subscheme $Z\subset X$.
\end{Definition}

For any $k$-morphism $\SS \xrightarrow{\pi} \TT$ of finite type there is an induced group homomorphism
\[ \pi_\ast: \KK_0(\Var/\SS) \longrightarrow \KK_0(\Var/\TT) \]
also denoted by $\int_\pi$ which is injective for any locally closed substack $\SS \hookrightarrow \TT$. Indeed, $\pi^\ast$ defined by linear extension of $\pi^\ast([X \xrightarrow{f} \TT]) = [f^{-1}(\SS) \rightarrow \SS]$ satisfies $\pi^\ast \circ \pi_\ast = id$. \\

\begin{Remark} \label{Kaiso} \qquad
\begin{enumerate} 
\item One can replace the category $\Var/\SS$ by the category $\Sch/\SS$ of finite type $\SS$-schemes or $\Sp/\SS$ of finite type algebraic $\SS$-spaces. The induced group homomorphisms
\[ \KK_0(\Var/\SS) \longrightarrow \KK_0(\Sch/\SS) \longrightarrow \KK_0(\Sp/\SS) \]
are isomorphisms if the connected components of $\SS$ are of finite type. In all other cases one should pass to the completion with respect to the topology having $\KK_0(\Var/\SS\setminus\mathcal{U}) \subset \KK_0(\Var/\SS)$ as neighborhoods of zero, where $\mathcal{U}\subset \SS$ runs through the directed set of open substacks of finite type over $k$. The completion is already given by the quotient 
\[ \hat{\KK}_0(\Var/\SS) = \KK_0(\Var/\SS) / \cap_{\mathcal{U}\subset \SS} \KK_0(\Var/\SS \setminus \mathcal{U}) \]
making $\KK_0(\Var/\SS)$ into a Hausdorff space. Notice that the intersection $\cap_{\mathcal{U}\subset \SS} \KK_0(\Var/\SS \setminus \mathcal{U})$ is nonzero even though $\cup_{\mathcal{U}\subset \SS} \mathcal{U}=\SS$. Indeed, if $(\ZZ_n)_{n\in \mathbb{N}}$ is a locally finite stratification of a connected stack $\SS$ into locally closed substacks $\ZZ_n$ of finite type, the element $0\neq [\SS \rightarrow \SS]-[\sqcup_{n\in \mathbb{N}} \ZZ_n \rightarrow \SS]$ is in the intersection. A similar construction for $\Sch/\SS$ and $\Sp/\SS$ leads to
\[ \hat{\KK}_0(\Var/\SS) \xrightarrow{\sim} \hat{\KK}_0(\Sch/\SS) \xrightarrow{\sim} \hat{\KK}_0(\Sp/\SS). \]

\item By taking fiber products there is a well-defined pull-back homomorphism
\[ \pi^\ast: \KK_0(\Sp/\TT) \longrightarrow \KK_0(\Sp/\SS) \]
for any representable morphism $\pi:\SS \rightarrow \TT$. After passing to completions or under special conditions on $\SS$ (see above), we can replace $\KK_0(\Sp/\dots)$ by $\KK_0(\Var/\ldots)$.
\end{enumerate}
\end{Remark}

Let $\SS$ be a monoid in the category of Artin stacks locally of finite type over $k$, i.e.\ there are $k$-morphisms $\epsilon:\Spec(k)\rightarrow \SS$ and $\mu:\SS\times_k \SS \rightarrow \SS$ satisfying the usual axioms of a monoid. Let us assume that $\mu$ is of finite type. The following lemma is obvious.

\begin{Lemma}
 There is a convolution product on $\KK_0(\Var/\SS)$ defined by bilinear extension of
\[ [X\xrightarrow{f} \SS]\cdot [Y \xrightarrow{g} \SS] = [ X\times_k Y \xrightarrow{f\times g} \SS\times_k \SS \xrightarrow{\mu} \SS]. \]
Together with the unit $1=[\Spec(k)\xrightarrow{\epsilon} \SS]$ it provides $\KK_0(\Var/\SS)$ with a ring structure which is commutative if $\mu$ is.
\end{Lemma}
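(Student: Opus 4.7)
The plan is to unpack the formula and check the ring axioms one by one, with the only subtlety being well-definedness on the scissor relations.

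First I would confirm that $[X\xrightarrow{f}\SS]\cdot [Y\xrightarrow{g}\SS]:=[X\times_k Y \xrightarrow{\mu\circ(f\times g)} \SS]$ defines a legitimate element of $\KK_0(\Var/\SS)$. The source $X\times_k Y$ is separated, and reduced because $k=\bar k$ is in particular perfect, so products of reduced $k$-schemes remain reduced. The composite morphism is of finite type since each of $f$, $g$ is of finite type and $\mu$ itself is of finite type by hypothesis; this last assumption is the one substantive input, ensuring we stay inside $\KK_0(\Var/\SS)$ rather than being forced into the completion of Remark \ref{Kaiso}.

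Second, I would verify bilinearity. Given a closed reduced subscheme $Z\subset X$, base change along the projection $X\times_k Y\to X$ yields the scheme-theoretic decomposition $X\times_k Y = (Z\times_k Y) \sqcup ((X\setminus Z)\times_k Y)$ with $Z\times_k Y$ closed and $(X\setminus Z)\times_k Y$ its open complement. Composing with $\mu\circ(f\times g)$ and applying the defining scissor relation of $\KK_0(\Var/\SS)$ proves additivity in the first argument; the symmetric argument handles the second, so the formula extends to a well-defined bilinear pairing.

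Finally, the associativity, unit, and (conditional) commutativity axioms are inherited termwise from the monoid structure on $\SS$. Both iterated triple products produce the class of $X_1\times_k X_2\times_k X_3\to\SS$ with map $\mu\circ(\mu\times\mathrm{id})\circ(f_1\times f_2\times f_3) = \mu\circ(\mathrm{id}\times\mu)\circ(f_1\times f_2\times f_3)$, where the two expressions coincide by associativity of $\mu$ and of the fiber product. The unit axiom follows from the canonical isomorphism $\Spec(k)\times_k X\cong X$ together with $\mu\circ(\epsilon\times\mathrm{id})=\mathrm{id}_\SS$, and commutativity of $\mu$ combined with the swap isomorphism $X\times_k Y\cong Y\times_k X$ identifies the two products in the commutative case. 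No step presents a real obstacle; the lemma is essentially bookkeeping, which is why the authors describe it as obvious.
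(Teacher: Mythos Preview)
Your proposal is correct and simply spells out the routine verifications that the paper omits entirely, since the authors state only that ``the following lemma is obvious'' and give no proof. There is nothing to compare: your argument is the natural unpacking of that remark, and each step (reducedness of the product over a perfect field, finite type via the hypothesis on $\mu$, compatibility with scissor relations, and the ring axioms from the monoid structure) is handled correctly.
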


\begin{Remark} \qquad
\begin{enumerate}
\item There is also a fiber product (over $\SS$) on $\KK_0(\Sp/\SS)$ for any Artin stack $\SS$ locally of finite type. It can be seen as a special case of a more general convolution product (see the end of the next section).  
\item If $\pi:\SS \rightarrow \TT$ is a finite type homomorphism of monoids in the category of Artin stacks locally of finite type over $k$, $\pi_\ast$ is a homomorphism of rings.
\end{enumerate}
\end{Remark}

\begin{Example} \label{Gln} Let us denote the motive of $\AA_k^1$ by $\LL$ and that of $\PP_k^{n-1}$ by $[n]$. Hence, $[n]=1+\LL + \ldots + \LL^{n-1}=\frac{\LL^{n}-1 }{ \LL-1}$ in the ring\footnote{Notice that $\Spec(k)$ is a monoid.} $\KK_0(\Var/k)$. If we denote the product $\prod_{i=1}^n [i]$ by $[n]!$, we also get $[\Gl_k(n)]=\prod_{i=0}^{n-1}(\LL^n-\LL^i)=\LL^{{n \choose 2}}[n]!(\LL-1)^n$ (see Lemma \ref{principal}).
\end{Example}

All of the above can be generalized to objects $\XX \rightarrow \SS$ in the category $\St/\SS$ of Artin stacks of finite type over $\SS$. This leads directly to the group $\KK_0(\St/\SS)$ which is a (commutative) ring if $\SS$ is a (commutative) monoid. Moreover, we can consider the full subcategory $\Sta/\SS$ of $\St/\SS$ consisting of morphisms $\XX \rightarrow \SS$ as above with $\XX$ having affine (or equivalently ``linear'') groups as stabilizers of closed points. \\
If $\SS$ is a monoid, we get the following sequence of ring homomorphisms
\begin{eqnarray} \label{kette}
 &&\KK_0(\Var/k) \xrightarrow{\epsilon_\ast}  \KK_0(\Var/\SS) \longrightarrow \KK_0(\Sch/\SS) \longrightarrow  \\
 && \qquad \qquad \qquad \longrightarrow \KK_0(\Sp/\SS) \longrightarrow \KK_0(\Sta/\SS) \longrightarrow \KK_0(\St/\SS). \nonumber 
\end{eqnarray}

There is also an equivariant version of the theory. For this let $G$ be an affine group acting on all varieties and stacks in question. Moreover, we assume that all morphisms are $G$-equivariant and that the action is good in the sense that every point has an affine $G$-invariant neighborhood. Denote by $\KK_0^G(\St/\SS)$ the corresponding group of motives, hence $\KK_0^{\{1\}}(\St/\SS)=\KK_0(\St/\SS)$. We are mostly interested in the group\footnote{One should probably write $\GG_{m,k}$ to distinguish this object from the functor $\GG_m:\SS\mapsto \Gl_\SS(1)$ but by abuse of language we will skip the index $k$.} $\GG_m$ and its subgroups
%\footnote{This is another (common) abuse of notation, since the isomorphism class of $\mu_d$ depends on $k$.} 
$\mu_d$ of $d$-th roots of unity.
For technical reasons it is useful to consider the quotient of $\KK_0^G(\St/\SS)$ with respect to the subgroup generated by
\begin{equation} \label{relation} [ \XX \xrightarrow{\pi} \mathcal{Y} \xrightarrow{f} \SS] -\LL^r\cdot[\mathcal{Y} \xrightarrow{f}\SS] \end{equation}
for any $G$-equivariant vector bundle $\XX\xrightarrow{\pi}\mathcal{Y}$ on $\mathcal{Y}$ of rank $r$. The quotient will be denoted by $\KK^G(\St/\SS)$ and by $\KK(\St/\SS)$ if $G$ is trivial\footnote{In \cite{Ekedahl2} and \cite{Ekedahl1} this group is denoted by $\KK_0(\rm Stck_k)$ for $\SS=\Spec(k)$.}. A similar construction works for any subcategory in (\ref{kette}) leading to quotient groups $\KK^G(\ldots/\SS)$ which are in fact rings as the subgroup generated by (\ref{relation}) is actually an ideal. Notice that for $G=\{1\}$ and $\SS$ a variety the element in (\ref{relation}) is automatically zero in the first four $\KK_0$-groups in (\ref{kette}) and, thus, for example $\KK(\Var/\SS)=\KK_0(\Var/\SS)$.\\ 

The following lemma will be useful throughout the paper. Notice that the ring $\KK_0(\Var/k)$ maps naturally into $\KK^{G}(\St/\SS)$ as any variety carries a trivial $G$-action. 

\begin{Lemma}[cf.\ \cite{Ekedahl1}, Proposition 1.1] \label{principal}
Let $\XX\xrightarrow{\pi} \YY$ be a $G$-equivariant $\Gl_k(n)$-principal bundle on $\YY \xrightarrow{f}\SS$. Then $[\XX \xrightarrow{\pi} \YY \xrightarrow{f} \SS]=[\Gl_k(n)]\cdot[\YY\xrightarrow{f} \SS]$ in $\KK^{G}(\St/\SS)$.
\end{Lemma}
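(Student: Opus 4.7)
I would deduce this from the defining relation (\ref{relation}) by an inductive argument. First form the associated rank $n$ vector bundle $E := \XX \times^{\Gl_k(n)} \AA^n_k$ on $\YY$ with its induced $G$-action; under this correspondence $\XX$ is identified $G$-equivariantly with the frame bundle of $E$. More generally, write $E^{(k)}$ for the $k$-fold fibre product of $E$ over $\YY$ (so $E^{(0)}=\YY$) and let $U_k\subset E^{(k)}$ be the open $G$-substack of fibrewise linearly independent $k$-tuples; then $U_0=\YY$ and $U_n=\XX$, and I would proceed by induction on $k$.

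The forgetful projection $E^{(k)}\to E^{(k-1)}$ is the pullback of $E\to\YY$, so its restriction over $U_{k-1}$ is a $G$-equivariant rank $n$ vector bundle $U_{k-1}\times_\YY E \to U_{k-1}$. Inside it, the closed complement of $U_k$ parameterises tuples $(v_1,\dots,v_{k-1},w)$ with $w\in\Span(v_1,\dots,v_{k-1})$; since the $k-1$ universal sections of $E$ over $U_{k-1}$ are fibrewise linearly independent, their span is a $G$-equivariant rank $(k-1)$ sub-vector bundle $F\subset E|_{U_{k-1}}$, and this complement is the total space of $F$. Applying relation (\ref{relation}) to the ambient rank $n$ bundle and to $F$ and subtracting yields $[U_k \to \SS] = (\LL^n-\LL^{k-1})\,[U_{k-1}\to\SS]$ in $\KK^G(\St/\SS)$.

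Iterating this from $U_0=\YY$ gives
\[ [\XX \to \YY\xrightarrow{f}\SS] \;=\; \prod_{i=0}^{n-1}(\LL^n-\LL^i)\cdot[\YY\xrightarrow{f}\SS]. \]
Specialising to the trivial principal bundle $\Gl_k(n)\to\Spec(k)$ yields the identity $[\Gl_k(n)]=\prod_{i=0}^{n-1}(\LL^n-\LL^i)$ cited in Example \ref{Gln}; substituting this back recovers the lemma.

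The main technical obstacle is to check that the complement of $U_k$ in $U_{k-1}\times_\YY E$ is an honest $G$-equivariant sub-vector bundle, so that relation (\ref{relation}) actually applies. This reduces to showing that the sheaf map $\mathcal{O}_{U_{k-1}}^{k-1}\to E|_{U_{k-1}}$ assembled from the universal sections has locally free cokernel of rank $n-k+1$; this is the standard fact that a morphism of locally free sheaves which is fibrewise injective has locally free cokernel. Equivariance is automatic since the $G$-action commutes with the linear $\Gl_k(n)$-action used to build $E$.
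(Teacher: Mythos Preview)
Your proof is correct and follows essentially the same route as the paper's: form the associated $G$-equivariant vector bundle $E=\XX\times^{\Gl_k(n)}\AA^n_k$, filter the frame bundle by the stacks $U_k$ of $k$ linearly independent fibrewise vectors, and use relation (\ref{relation}) together with the scissor relation on each step $U_{k-1}\times_\YY E \supset U_k$ to pick up the factor $\LL^n-\LL^{k-1}$. Your additional paragraph verifying that the span of the universal sections really is a $G$-equivariant sub-vector bundle is a point the paper passes over in a single clause, so your write-up is in fact slightly more complete.
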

\begin{proof}
Let $\mathcal{V} \rightarrow \YY$ be the $G$-equivariant vector bundle on $\YY$ associated to $\pi$, i.e.\ $\mathcal{V}=\XX\times_{\Gl_k(n)} \AA^n_k$ with $G$ acting on the first factor only. For $0\le i\le n$ let $\XX_i\rightarrow \YY$ be the bundle of $i$ linear independent vectors in $\mathcal{V}$ with $\XX_0:=\YY$. Obviously $\XX_n=\XX$ and there are natural $G$-equivariant projections $\XX=\XX_n \rightarrow \XX_{n-1} \rightarrow \ldots \rightarrow \XX_0=\YY$. For any $0\le i < n$ the bundle $\XX_{i+1} \rightarrow\XX_i$ is the complement in $\mathcal{V}\times_\YY \XX_i$ of the canonical subbundle of rank $i$ spanned by the vectors in $\XX_i$. This bundle is actually a $G$-equivariant vector bundle of rank $i$. By (\ref{relation}), Example \ref{Gln} and the scissor relations $[\XX\rightarrow \SS]=[\YY\rightarrow \SS]\cdot\prod_{i=0}^{n-1}(\LL^n-\LL^i)=[\Gl_k(n)]\cdot[\YY\rightarrow \SS]$. \end{proof}
 
\begin{Corollary} 
The image of $[\Gl_k(n)]$ in $\KK^G(\Sta/\SS)$ is invertible with inverse $[\BGl_k(n) \rightarrow \Spec(k) \xrightarrow{\epsilon} \SS]$, where $\BGl_k(n)$ denotes the quotient stack $\Spec(k)/\Gl_k(n)$.  
\end{Corollary}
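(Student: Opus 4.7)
The plan is to apply Lemma \ref{principal} to the universal (tautological) $\Gl_k(n)$-principal bundle $\Spec(k)\rightarrow \BGl_k(n)$. First I would check that this setup fits into the framework $\Sta/\SS$: the stack $\BGl_k(n)=\Spec(k)/\Gl_k(n)$ has a single closed point whose stabilizer is $\Gl_k(n)$, which is affine, so $\BGl_k(n)\rightarrow \Spec(k)\xrightarrow{\epsilon}\SS$ is indeed an object of $\Sta/\SS$, and $\Spec(k)\rightarrow \BGl_k(n)$ is a principal $\Gl_k(n)$-bundle in the required sense.

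Next, take $\YY=\BGl_k(n)$ with its structure morphism $f:\BGl_k(n)\rightarrow \Spec(k)\xrightarrow{\epsilon}\SS$ and $\XX=\Spec(k)$ with the canonical projection $\pi:\Spec(k)\rightarrow \BGl_k(n)$. Lemma \ref{principal} (applied with trivial equivariance $G=\{1\}$) then yields
\[ [\Spec(k) \xrightarrow{\pi} \BGl_k(n) \xrightarrow{f} \SS] \;=\; [\Gl_k(n)]\cdot [\BGl_k(n)\xrightarrow{f} \SS] \]
inside $\KK(\Sta/\SS)$.

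The left-hand side is simply the class of the composition $\Spec(k)\rightarrow \BGl_k(n)\rightarrow \Spec(k)\xrightarrow{\epsilon}\SS$. Since $\BGl_k(n)\rightarrow \Spec(k)$ post-composed with the inclusion $\Spec(k)\rightarrow \BGl_k(n)$ is the identity on $\Spec(k)$, this composition is just $\epsilon$, so the left-hand side equals $[\Spec(k)\xrightarrow{\epsilon}\SS]=1$, the unit of $\KK(\Sta/\SS)$. Hence $[\Gl_k(n)]\cdot [\BGl_k(n)\rightarrow \SS]=1$, proving invertibility with the claimed inverse. The statement for general $G$ follows in exactly the same way, giving everything trivial $G$-equivariance (or, alternatively, by observing that $\KK_0(\Var/k)\rightarrow \KK^G(\Sta/\SS)$ factors through the non-equivariant ring).

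The only real subtlety — and the one potential obstacle — is confirming that Lemma \ref{principal} applies to the stacky bundle $\Spec(k)\to \BGl_k(n)$ rather than just to principal bundles between schemes or algebraic spaces; this is why the statement of the corollary lives in $\KK^G(\Sta/\SS)$, where such stacky quotients are legitimate objects, rather than in any of the smaller rings appearing in sequence (\ref{kette}).
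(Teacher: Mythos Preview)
Your argument is correct and is exactly the one the paper has in mind: the Corollary is stated without proof precisely because it follows immediately from Lemma~\ref{principal} applied to the tautological $\Gl_k(n)$-principal bundle $\Spec(k)\to\BGl_k(n)$, just as you do. Your remark on the potential subtlety is also apt---Lemma~\ref{principal} is formulated and proved in $\KK^G(\St/\SS)$, so stacky $\YY$ such as $\BGl_k(n)$ are allowed, and this is indeed why the inverse only exists once one passes to $\KK^G(\Sta/\SS)$.
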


The following proposition generalizes Remark \ref{Kaiso} (1).
\begin{Proposition}[cf.\ \cite{Ekedahl1}, Theorem 1.2] \label{Kaiso2}
If $\SS$ is a monoidal Artin stack of finite type over $k$, the ring homomorphism
\[ \KK^G(\Var/\SS)\big[[\Gl_k(n)]^{-1}, n\in \mathbb{N}\big] \longrightarrow \KK^G(\Sta/\SS) \]
is an isomorphism. For $\SS$ being locally of finite type over $k$ one needs to complete $\KK^G(\Var/\SS)\big[[\Gl_k(n)]^{-1}, n\in \mathbb{N}\big]$ with respect to the topology introduced in Remark \ref{Kaiso} (1).
\end{Proposition}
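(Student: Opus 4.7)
The plan is to construct an inverse to the natural map by stratification. Given an object $[\XX \to \SS]$ in $\Sta/\SS$ with $\XX$ of finite type and having affine stabilizers at closed points, the goal is to express $[\XX \to \SS]$ as a combination of classes of varieties divided by classes of $\Gl_k(n)$'s. The key geometric input, which is the heart of Ekedahl's argument, is that any finite type Artin stack with affine (linear) stabilizers admits a finite locally closed stratification $\XX = \sqcup_\alpha \XX_\alpha$ such that each stratum $\XX_\alpha$ is isomorphic to a quotient stack $[Y_\alpha/\Gl_k(n_\alpha)]$ for some variety $Y_\alpha$ (equivariantly for the $G$-action, after possibly refining). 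Granted such a stratification, one defines
\[ \Phi([\XX \to \SS]) := \sum_\alpha [\Gl_k(n_\alpha)]^{-1} \cdot [Y_\alpha \to \SS] \in \KK^G(\Var/\SS)\big[[\Gl_k(n)]^{-1}\big]. \]

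To see that this is the correct inverse, apply Lemma \ref{principal}: for each stratum, the $\Gl_k(n_\alpha)$-principal bundle $Y_\alpha \to [Y_\alpha/\Gl_k(n_\alpha)]$ gives $[Y_\alpha \to \SS] = [\Gl_k(n_\alpha)]\cdot [\XX_\alpha \to \SS]$ in $\KK^G(\Sta/\SS)$, so after inverting $[\Gl_k(n_\alpha)]$ and summing over $\alpha$ we recover $[\XX \to \SS]$. Conversely, starting from $[X \to \SS]$ with $X$ a variety, we view $X$ as a trivial-stabilizer stack, so the composition in the other direction is clearly the identity. This shows the map is surjective with a one-sided inverse; to get injectivity of the constructed map $\Phi$ (and hence that $\Phi$ is genuinely well-defined on the relations), one checks compatibility with refinement of stratifications and with different presentations of the same stratum as $[Y/\Gl_k(n)]$: two such presentations differ by a common refinement through $[Y'/\Gl_k(n+m)]$ obtained by pulling back along a $\Gl_k(n) \hookrightarrow \Gl_k(n+m)$-equivariant bundle, and the resulting identity in $\KK^G(\Var/\SS)[[\Gl_k(n)]^{-1}]$ follows from Lemma \ref{principal} applied to the fiber, i.e.\ to $\Gl_k(n+m)/\Gl_k(n)$, together with the scissor relations.

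The main obstacle is the stratification result itself: one must know that every finite type Artin stack with linear stabilizers has a locally closed stratification into global $\Gl_k(n)$-quotients, equivariantly for the fixed $G$-action. This is essentially a theorem of Kresch (for the existence of such stratifications) combined with the classical fact that any affine algebraic group embeds in some $\Gl_k(n)$, so a gerbe stratum banded by a linear group $H$ can be re-presented as $[Y/\Gl_k(n)]$ where $Y$ is the associated bundle along $H \hookrightarrow \Gl_k(n)$. The $G$-equivariance is handled by choosing $G$-invariant strata, which exists because the action is assumed to be good (every point has an affine $G$-invariant neighborhood).

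For the locally finite type case, the argument is applied to each finite type open substack $\mathcal{U} \subset \SS$, yielding compatible inverses modulo the neighborhoods $\KK^G(\Var/\SS\setminus \mathcal{U})[[\Gl_k(n)]^{-1}]$ of zero. Passing to the completion, the inverses assemble into a continuous ring homomorphism whose composition with the natural map is the identity on both sides, exactly as in the finite type case. The completion is forced here precisely because the stratification of $\XX$ may be infinite when $\SS$ is only locally of finite type, and the resulting sum $\sum_\alpha [\Gl_k(n_\alpha)]^{-1}\cdot[Y_\alpha\to\SS]$ converges only in the completed ring.
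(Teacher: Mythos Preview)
The paper does not actually give a proof of this proposition; it simply attributes the result to Ekedahl (Theorem~1.2 of \cite{Ekedahl1}) and moves on. Your proposal is a faithful outline of Ekedahl's argument, which is the intended proof: stratify $\XX$ into global quotients $[Y_\alpha/\Gl_k(n_\alpha)]$ using Kresch's result (this is precisely what the paper later records in the footnote to Proposition~\ref{vanishing}), invoke Lemma~\ref{principal} to rewrite each stratum as $[\Gl_k(n_\alpha)]^{-1}[Y_\alpha\to\SS]$, and verify independence of the presentation. So there is nothing to compare---your approach \emph{is} the cited one.

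One small comment: your treatment of well-definedness (independence of the chosen stratification and presentation) is the only place where real work hides. You correctly identify that two presentations of the same stratum can be compared through a common $\Gl_k(n+m)$-presentation, but in Ekedahl's actual proof this step is organized slightly differently, via an explicit inverse map built from a canonical filtration rather than by checking all pairs of presentations. Either route works; your sketch is adequate for the level of detail in this paper.
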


\begin{Remark} \label{Kaiso4}
Let us assume that $G:=\GG_m$ acts trivially on $\SS$. Notice that $\KK(\St/\SS) \hookrightarrow \KK^{\GG_m}(\St/\SS)$ by taking trivial $\GG_m$-actions. A left inverse is given by forgetting the $G$-action. Any $\GG_m$-equivariant stack $\XX \rightarrow \SS$ is a $\GG_m$-principal bundle on $\XX/\GG_m\longrightarrow \SS$ with trivial $\GG_m$-action. Hence $\KK^{\GG_m}(\St/\SS)=\KK(\St/\SS)$ by Lemma \ref{principal}. Now, let $X\rightarrow \SS$ be a $\GG_m$-equivariant variety over $\SS$ together with an affine $\GG_m$-equivariant stratification $(X_i)_{i\in I}$ of $X$ which exists by the goodness of the $\GG_m$-action. Moreover, we can assume that the stabilizer is constant on every stratum $X_i$. As the field $k$ is algebraically closed, we can apply Luna's \'{e}tale slice theorem to any stratum to see that $X_i$ is a $\GG_m$-equivariant principal bundle on $X_i//\GG_m$ with structure group being the trivial group $\{1\}$ or $\GG_m$ depending on the size of the stabilizer. Thus, by Lemma \ref{principal} $[X\rightarrow \SS]$ is in the image of $\KK(\Var/\SS)$ in $\KK^{\GG_m}(\Var/\SS)$ and $\KK(\Var/\SS)=\KK^{\GG_m}(\Var/\SS)$ follows as in the ``stacky'' case.    
\end{Remark}

\section{$\lambda$-rings}

The notion of a $\lambda$-ring is crucial for defining (motivic) Donaldson--Thomas invariants. As the theory is well-known (cf.\ \cite{Knutson},\cite{Yau}), we will only recall the definitions and give a few examples.
\begin{Definition}
A commutative ring $R$ is called a $\lambda$-ring if it is equipped with an additional map $\lambda:R\ni a \longmapsto \lambda_a(t)=\sum_{n\ge 0} \lambda^n(a)t^n \in R[[t]]$ such that
\begin{itemize}
 \item[(i)] $ \lambda_a(t)= 1 + at \mod t^2$,
 \item[(ii)] $ \lambda_0(t)=1$,
 \item[(iii)] $ \lambda_{a+b}(t)=\lambda_a(t)\lambda_b(t).$
\end{itemize}
The opposite $\lambda$-ring $(R,\lambda^{op})$ is defined by $\lambda^{op}_a(t)=\lambda_a(-t)^{-1}$. A homomorphism from a $\lambda$-ring $(R_1,\lambda^{(1)})$ to a $\lambda$-ring $(R_2,\lambda^{(2)})$ is a ring homomorphism $\pi:R_1 \rightarrow R_2$ such that $ \pi(\lambda^{(1)}_a(t)) = \lambda^{(2)}_{\pi(a)}(t)$ for all $a\in R_1$. 
\end{Definition}

\begin{Remark}
The letter $\lambda$ has been chosen as in many early examples the structure was induced by exterior powers $\Lambda$ (see below). The opposite $\lambda$-structure was than denoted by the letter $\sigma$ as it was induced by symmetric powers $\Sym$. In the sequel we will use the letter $\sigma$ to denote a general $\lambda$-structure and reserve the letter $\lambda$ for its opposite $\sigma^{op}$ because in our examples the $\lambda$-structure is mostly induced by operations like $\Sym$.    
\end{Remark}
 
\begin{Definition} \qquad \label{special}
\begin{itemize}
\item[(i)] A $\lambda$-ring $(R,\sigma)$ is called special if $\sigma^n(ab)=P^n(\sigma^1(a),\ldots, \sigma^n(a),\sigma^1(b),\ldots,$ $ \sigma^n(b))$ and $\sigma^m(\sigma^n(a))=Q^{m,n}(\sigma^1(a),\ldots,\sigma^{mn}(a))$ for certain universal polynomials $P^n\in \mathbb{Z}[X_1,\ldots,X_n,Y_1,\ldots,Y_n], Q^{m,n}\in \mathbb{Z}[X_1,\ldots,X_{mn}]$ independent of $(R,\sigma)$ and $a\in R$.
\item[(ii)] An element $a\in R$ is called a line element if $\sigma_a(t)=\sum_{n\ge 0}a^nt^n=\frac{1}{1-at}$. 
\end{itemize}
\end{Definition}

\begin{Remark} \qquad
\begin{enumerate}
\item The Definition \ref{special} can be also expressed in terms of $\lambda=\sigma^{op}$ (with different universal polynomials), e.g.\ $a\in R$ is a line element if $\lambda_a(t)=1+at$. These ``opposite'' definitions are often used in the literature, in other words $(R,\sigma)$ is special in our sense if $(R,\sigma^{op})$ is special in terms of the opposite definition and similarly for line elements.
\item Some authors call a $\lambda$-ring a pre-$\lambda$-ring and a special $\lambda$-ring just a $\lambda$-ring. One should, therefore, be careful with the literature.
\end{enumerate}
\end{Remark}

%\begin{Definition}
%For any $\lambda$-ring $(R,\sigma)$ define Adams operations $\psi:R\ni a\longmapsto \psi_a(t)=\sum_{n\ge 1}\psi^n(a)t^n \in tR[[t]]$ by means of 
%\[ \psi_a(t) = t\frac{d}{dt}\log \sigma_a(t)=\frac{t\sigma_a'(t)}{\sigma_a(t)}.\] 
%\end{Definition}
%Then the following holds.
%\begin {itemize}
% \item[(i)] $\psi_a(t)= at \mod t^2$,
% \item[(ii)] $\psi_0(t)=0$,
% \item[(iii)] $\psi_{a+b}(t)=\psi_a(t)+ \psi_b(t)$
%\end {itemize}
%and if $\sigma^n(ab)=P^n(\sigma^1(a),\ldots, \sigma^n(a),\sigma^1(b),\ldots,\sigma^n(b))$ and $\sigma^m(\sigma^n(a))=Q^{m,n}(\sigma^1(a),\ldots,\sigma^{mn}(a))$ as in Definition (\ref{special}), we even have for all $m,n\ge1$
%\begin {itemize}
% \item[(iv)] $\psi^n(ab)=\psi^n(a)\psi^n(b) $,
% \item[(v)] $\psi^m(\psi^n(a))=\psi^{mn}(a)$.
%\end {itemize}
%Moreover, $\sigma_a(t)=\exp\big(\sum_{n\ge 1} \frac{1}{n}\psi_n(a)t^n\big)$ in $(R\otimes \mathbb{Q})[[t]]$. Thus, if $R=R\otimes \mathbb{Q}$, $\lambda$-structures and Adams operations are in one-to-one correspondence and $(R,\sigma)$ is special if and only if (iv) and (v) holds for any $a,b\in R$. If $a\in R$ is a line element, we get $\psi_a(t)=\frac{at}{1-at}=\sigma_a(t)-1$, i.e.\ $\psi^n(a)=\sigma^n(a)=a^n \;\forall n\ge 1$.

\begin{Example} \label{lambdarings}\qquad
\begin{enumerate}
\item The standard $\lambda$-structures on $\mathbb{Z}$ are $\sigma_a(t)=\frac{1}{(1-t)^a}=\sum_{n\ge 0} { a + n- 1 \choose n} t^n$ and $\lambda_a(t)=\sigma^{op}_a(t)=(1+t)^a=\sum_{n\ge 0} { a \choose n}t^n$. 
%The Adams operations for the special $\lambda$-ring $(\mathbb{Z},\sigma)$ are given by $\psi_a(t)=\frac{at}{1-t}=\sum_{n\ge 1}at^n$ and 
%One can use these formulas to extend the $\lambda$-structure to $\mathbb{Q}$. 
The only line element is $1\in \mathbb{Z}$. 
 
\item Let $G$ be some algebraic group over $\QQ$ and denote by $R$ the $\KK_0$-group of the abelian category $G-\Rep_\QQ^{f.d.}$ of finite dimensional $\QQ$-linear algebraic $G$-representations. There is a $\lambda$-ring structure $\sigma$ on $R$ such that if $V$ is a finite dimensional $G$-representation, $\sigma^n(\cl(V))=\cl(\Sym^nV)$ and $\lambda^n(\cl(V))=\cl(\Lambda^n V)$ for the opposite $\lambda$-structure $\lambda=\sigma^{op}$. The $\lambda$-ring $(R,\sigma)$ is special and classes of one-dimensional representations are line elements. For $G=\{1\}$ we obtain the previous Example.
  
\item Let $R=\CF(\SS,\mathbb{Z})$ be the space of constructible $\mathbb{Z}$-valued functions on a commutative monoidal algebraic space $\SS$ with multiplication map $\mu:\SS\times \SS \rightarrow \SS$ being of finite type. The ring structure on $R$ is given by the convolution product $fg=\mu_\ast(f\boxtimes g)$, where the push-forward is given by integrating along the fibers with respect to the Euler characteristic with compact support. Any constructible function $f$ on $\SS$ defines a constructible function on $\Sym^n \SS$ with value $\prod_{i=1}^p {f(x_{m_i}) + m_{i+1}-m_i-1 \choose m_{i+1}-m_i}$ at $(x_1,\ldots,x_n)\in \Sym^n \SS$ satisfying $x_m=x_{m_i}$ if and only if $m_i\le m < m_{i+1}$. By integrating this function along the map $\Sym^n \SS \xrightarrow{\:\mu\:}\SS$ we obtain the constructible function $\sigma^n(f)$. This defines a special $\lambda$-ring structure $\sigma$ on $R$. Moreover, any homomorphism $\SS \rightarrow \TT$ of finite type defines a $\lambda$-ring homomorphism by pushing forward functions. For $\SS=\Spec(k)$ we obtain once more the first Example.
 
\item Let $(\SS,\mu,\epsilon)$ be commutative monoid in the category of algebraic spaces locally of finite type over $k$ with multiplication map $\mu:\SS\times \SS \rightarrow \SS$ being of finite type as before. There is a $\lambda$-ring structure $\sigma$ on $R=\KK_0(\Var/\SS)$ such that
\[ \sigma^n([X\xrightarrow{f} \SS])=\big[\Sym^n X \xrightarrow{\Sym^n(f)} \Sym^n \SS \xrightarrow{\mu} \SS \big] \]
This $\lambda$-ring is not special (see \cite{LarsenLunts}, section 8). However, $\LL=[\AA^1\rightarrow \Spec(k)\xrightarrow{\epsilon} \SS]$ is a line element and for all $m,n\ge 1$ and any polynomial $f\in \mathbb{Z}[X]$ the following equations hold
\begin{eqnarray*}
 \sigma^n(f(\LL) a)&=& P^n(\sigma^1(a),\ldots, \sigma^n(a),\sigma^1(f(\LL)),\ldots, \sigma^n(f(\LL))), \\
 \sigma^m(\sigma^n(f(\LL))) &=& Q^{m,n}(\sigma^1(f(\LL)),\ldots,\sigma^{mn}(f(\LL))).
\end{eqnarray*}
Using this one can extend the $\lambda$-structure to the localization of $\KK_0(\Var/\SS)$ with respect to any family $\mathcal{F}$ of polynomials $f\in \mathbb{Z}[X]$ such that $f(X)\in \mathcal{F}$ implies $f(X^n)\in \mathcal{F}$ for all $n\ge 1$ by putting 
\[ \hspace{0.8cm} \sigma^n\Bigl(\frac{a}{f(\LL)}\Bigr):=P^n(\sigma^1(a),\ldots, \sigma^n(a),\sigma^1(f(\LL)^{-1}),\ldots, \sigma^n(f(\LL)^{-1})),\]
where the $\sigma^n(f(\LL)^{-1})$ are uniquely determined by 
\[ \hspace{1.2cm} P^n(\sigma^1(f(\LL)^{-1}),\ldots, \sigma^n(f(\LL)^{-1}),\sigma^1(f(\LL)),\ldots, \sigma^n(f(\LL)))=\sigma^n(1)=1 \] 
as the coefficient of $\sigma^n(f(\LL)^{-1})$ can be shown to be $f(\LL^n)$. If we apply this to the family $\mathcal{F}=\{\LL^n,n\in \mathbb{N}\}\cup \{\LL^n-1,n\in \mathbb{N}\}$ we get by Example \ref{Gln} a $\lambda$-ring structure on $\KK_0(\Var/\SS)\bigl[[\Gl(n)]^{-1},n\in \mathbb{N}\bigr]$, extending the one on $\KK_0(\Var/\SS)$. In the same way $\KK_0(\Var/\SS)[\LL^{-1}]$ can be made into a $\lambda$-ring and $\sigma^n(a\LL^m)=\sigma^n(a)\LL^{mn}$ holds for any $m\in \mathbb{Z},n\in \mathbb{N}$ and $a\in \KK_0(\Var/\SS)[\LL^{-1}]$.
\item If $(\SS,\mu,\epsilon)$ is a commutative monoid in the category of Artin stacks locally of finite type over $k$, we can define a (not special) $\lambda$-ring structure on $\KK_0(\Sta/\SS)$ resp.\ $\KK_0(\St/\SS)$ by means of
\[ \sigma^n([\XX\xrightarrow{f} \SS])=\big[\XX^n/S_n \xrightarrow{f^n/S_n} \SS^n/S_n \xrightarrow{\mu} \SS \big], \]
where $\XX^n/S_n=:\Symm^n \XX$ denotes the quotient stack of $\XX^n$ by the group $S_n$ of permutations of $n$ elements and similarly for $\SS^n/S_n$. See \cite{Ekedahl2} and \cite{Romagny1} for the precise definition of that quotient stack and \cite{Ekedahl2} for the existence of the $\lambda$-ring structure. Notice that if $\SS$ is an algebraic space, $\Sym^n \SS$ is the coarse moduli space of $\Symm^n \SS=\SS^n/S_n$.\footnote{This doesn't make sense if $\SS$ is a stack with nontrivial stabilizers. This is why we restricted ourselves to algebraic spaces in the example before.} 
\end{enumerate}
\end{Example}

The bridge between the third and the fourth Example is given as follows. To any variety $X\rightarrow \SS$ one can associate a constructible $\mathbb{Z}$-valued function on $\SS$ given by fiberwise integration with respect to the Euler characteristic with compact support, i.e.\ by pushing forward the constant function $1_X$ to $\SS$. One can show that this defines a $\lambda$-ring homomorphism from $\KK_0(\Var/\SS)$ to the constructible $\mathbb{Z}$-valued functions on $\SS$.\\
There is also a connection between the second and the fourth Example. Let for simplicity $k= \mathbb{C}$ and $X$ be a smooth quasi-projective variety over $k$. Consider the Betti cohomology functors $\Ho_c^n(-,\QQ):(\Var^{sm,qp}/k)^{op}\longrightarrow \Vect_\QQ^{f.d.}$ mapping $X$ to $\Ho_c^n(X,\mathbb{Q})$\footnote{With $\Ho_c^n(X,\QQ)$ we mean the $n$-th cohomology with compact support of the complex analytic space underlying $X$. Note that (contravariant) functoriality is only given for proper morphisms. However, using Poincar\'{e} duality we can define $f_\ast:\Ho_c^{\dim_{\mathbb{R}} X-n}(X,\QQ) \longrightarrow \Ho_c^{\dim_{\mathbb{R}} Y-n}(Y,\QQ)$ for any $f:X\longrightarrow Y$.}. They satisfy the following properties,
\begin{itemize}
 \item[(A)] $\Ho_c^n(X\times Y,\QQ) \cong \oplus_{p+q=n} \Ho_c^p(X,\QQ)\otimes \Ho_c^q(Y,\QQ)$ naturally in $X$ and $Y$,
 \item[(B)] For a closed inclusion $i:Z\hookrightarrow X$ with open complement $j:U\hookrightarrow X$ there are natural transformations $\partial^n_{Z,X}:\Ho_c^n(Z,\QQ) \longrightarrow \Ho_c^{n+1}(U,\QQ)$ such that the following long sequence is exact 
 \[ \ldots \rightarrow \Ho_c^n(U,\QQ) \xrightarrow{j_\ast} \Ho_c^n(X,\QQ) \xrightarrow{i^\ast} \Ho_c^n(Z,\QQ) \xrightarrow{\partial^n_{Z,X}} \Ho_c^{n+1}(U,\QQ) \rightarrow \ldots. \]
\end{itemize}
Let $G^{mot}$ be the group of $\mathbb{Q}$-linear automorphisms $(\tau^n: \Ho_c^n(-,\QQ) \rightarrow \Ho_c^n(-,\QQ))_{n\ge 0}$ of the functors $\Ho_c^n(-,\QQ)$ respecting properties (A) and (B). By construction there is a functor $\Var^{sm,qp}/k \longrightarrow \DER^b(G^{mot}-\Rep^{f.d.}_\QQ)$ mapping $X$ to the trivial complex $\oplus_{n\ge 0}\Ho_c^n(X,\QQ)[-n]$ of finite dimensional $\QQ$-linear $G^{mot}$-representations. Because of (B) and a result of Bittner (cf.\ \cite[Theorem 3.1]{Bittner04}) we get a natural homomorphism
\[ \KK_0(\Var/k)\cong \KK_0(\Var^{sm,qp}/k) \longrightarrow \KK_0(\DER^b(G^{mot}-\Rep_\QQ^{f.d.}))\cong \KK_0(G^{mot}-\Rep_\QQ^{f.d.}) \]
which is actually a $\lambda$-ring homomorphism as $\Ho_c^\ast(\Sym^n X,\QQ)\cong \Sym^n \Ho_c^\ast(X,\QQ)$ naturally in $X$.\\

The construction of the $\lambda$-structure on $\KK_0(\Var/\SS)$ resp.\ $\KK_0(\St/\SS)$ can also be done 
%for any of the other subcategories in $\St/k$ considered in section 2.  Moreover, 
in the $G$-equivariant case. 
%everything can be defined on $\KK^G_0(\ldots/\SS)$
Ekedahl's proof that the subgroup generated by (\ref{relation}) is a $\lambda$-ideal (see \cite[Proposition 2.5]{Ekedahl2}) also generalizes literally to the equivariant case. Thus, the $\lambda$-structure on $\KK_0^G(\St/\SS)$ descends to $\KK^G(\St/\SS)$. Moreover, any homomorphism $\SS \rightarrow \TT$ of finite type defines a $\lambda$-ring homomorphism by pushing forward motives. The following proposition provides the bridge between Example (4) and (5).
\begin{Proposition} \label{Kaiso3} Let $(\SS,\mu,\epsilon)$ be a $G$-equivariant monoidal algebraic space locally of finite type with $\mu$  of finite type as before. By passing from $\KK_0^G(\ldots/\SS)$ to $\KK^G(\ldots/\SS)$ we get a sequence of $\lambda$-ring homomorphisms similar to the sequence (\ref{kette}).
\begin{eqnarray*} &&\KK^G(\Var/k) \xrightarrow{\epsilon_\ast}  \KK^G(\Var/\SS) \longrightarrow \KK^G(\Sch/\SS) \longrightarrow  \\
 && \qquad \qquad \qquad \longrightarrow \KK^G(\Sp/\SS) \longrightarrow \KK^G(\Sta/\SS) \longrightarrow \KK^G(\St/\SS). \nonumber 
\end{eqnarray*}
Moreover, the ring isomorphism of Proposition (\ref{Kaiso2}) is an isomorphism of $\lambda$-rings.
\end{Proposition}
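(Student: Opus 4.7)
The statement has two parts: (a) each arrow of the sequence is a $\lambda$-ring homomorphism, and (b) the ring isomorphism from Proposition~\ref{Kaiso2} is one of $\lambda$-rings. Recall from Example~\ref{lambdarings}(4) that the $\lambda$-structure on the first four groups is given by $\sigma^n([X\to\SS]) = [\Sym^n X \to \Sym^n\SS \xrightarrow{\mu}\SS]$, and from Example~\ref{lambdarings}(5) that on the last two it is $\sigma^n([\XX\to\SS])=[\Symm^n\XX \to \Symm^n\SS\xrightarrow{\mu}\SS]$. The map $\epsilon_\ast$ is $\lambda$-compatible because $\Sym^n\Spec(k)=\Spec(k)$ and the unit axiom for $\mu$ forces $\mu\circ\Sym^n(\epsilon)=\epsilon$. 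The arrows within $\Var\to\Sch\to\Sp$ and the arrow $\Sta\to\St$ preserve their respective symmetric powers tautologically, so only one arrow in the sequence requires real work.

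That arrow is $\KK^G(\Sp/\SS)\to\KK^G(\Sta/\SS)$, for which one must prove $[\Sym^n X\to\SS] = [\Symm^n X\to\SS]$ in $\KK^G(\Sta/\SS)$ for every algebraic space $X\to\SS$. Via the canonical morphism $\Symm^n X\to\Sym^n X$ to the coarse moduli space, one stratifies $X^n$ by $S_n$-stabilizer type; applying Luna's \'etale slice theorem in the spirit of Remark~\ref{Kaiso4}, each stratum becomes a principal bundle situation where the scissor relation together with relation~(\ref{relation}) forces the classes of $\Symm^n X$ and $\Sym^n X$ to coincide. This is the $G$-equivariant version of Ekedahl's \cite[Proposition~2.5]{Ekedahl2}; the generalization to non-trivial $G$ is routine because the diagonal $G$-action on $X^n$ commutes with the $S_n$-action permuting the factors. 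Parallel to this, one checks that the subgroup generated by (\ref{relation}) is a $\lambda$-ideal in $\KK_0^G(\Sta/\SS)$ so that $\sigma$ descends to $\KK^G$; this is proven by the same kind of stratification applied to $\Symm^n \XX$ for a $G$-equivariant $\Gl_k(r)$-bundle $\XX\to\YY$.

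For part (b), the ring isomorphism sends $[\Gl_k(n)]^{-1}$ in the localized source to $[\BGl_k(n)\to\Spec(k)\xrightarrow{\epsilon}\SS]$ in the target, by the Corollary following Lemma~\ref{principal}. The extended $\lambda$-structure on $\KK^G(\Var/\SS)[[\Gl_k(n)]^{-1},\,n\in\mathbb{N}]$ defined in Example~\ref{lambdarings}(4) is characterized uniquely by the universal identity $P^n(\sigma^\bullet([\Gl_k(n)]^{-1}),\sigma^\bullet([\Gl_k(n)]))=1$, and this identity holds automatically in the target because $[\Gl_k(n)]$ is already invertible there. Hence on generators of the source---the classes $[X\to\SS]$ (handled in part~(a)) together with the inverses $[\Gl_k(n)]^{-1}$---the two $\lambda$-structures coincide, and multiplicativity completes the verification. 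The main obstacle is the Ekedahl-style comparison $[\Sym^n X]=[\Symm^n X]$ in $\KK^G(\Sta/\SS)$; once that identification and the $\lambda$-ideal property of (\ref{relation}) are established, everything else is essentially formal.
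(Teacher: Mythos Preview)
Your overall decomposition is sound and matches the paper's strategy: everything reduces to the comparison $[\Sym^n X\to\SS]=[\Symm^n X\to\SS]$ in $\KK^G(\Sta/\SS)$, plus routine checks. But your proposed mechanism for that comparison does not work as stated.

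You stratify by $S_n$-stabilizer type and invoke Luna's slice theorem ``in the spirit of Remark~\ref{Kaiso4}'' together with relation~(\ref{relation}). The analogy with Remark~\ref{Kaiso4} breaks down: there the stabilizers are $\{1\}$ or $\GG_m$, and a $\GG_m$-principal bundle is handled by Lemma~\ref{principal} (ultimately a vector-bundle argument). Here the stabilizers are arbitrary subgroups $H\le S_n$, and on each stratum the fiber of $\Symm^n X\to\Sym^n X$ is the classifying stack $BH$. Neither relation~(\ref{relation}) (which is about vector bundles) nor Lemma~\ref{principal} (which is about $\Gl_k(n)$-principal bundles) says anything directly about finite-group gerbes. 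What you actually need is the nontrivial fact
\[
[\Spec(k)/S_n]=1 \quad\text{in }\KK^G(\Sta/k),
\]
which is Ekedahl's \cite[Theorem~4.3]{Ekedahl2} and is precisely the ``crucial observation'' the paper singles out. Its proof does use relation~(\ref{relation}), but via an embedding $S_n\hookrightarrow\Gl_k(n)$ and a comparison of associated vector bundles, not via Luna slices. You also cite \cite[Proposition~2.5]{Ekedahl2} for this comparison; that proposition is the $\lambda$-ideal statement (which you correctly invoke elsewhere), not the $\Sym$-vs-$\Symm$ identity. The relevant input is \cite[Theorem~4.3 and Corollary~4.4]{Ekedahl2}.

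For part~(b), be careful with the ``automatically'' in your last paragraph: the $\lambda$-rings here are \emph{not} special, so $\sigma^n(ab)=P^n(\ldots)$ is not available in general. The extension in Example~\ref{lambdarings}(4) only uses that identity when one factor is a polynomial in $\LL$, so to match the two $\lambda$-structures you must verify that $\sigma^n(f(\LL)\cdot a)=P^n(\sigma^\bullet(f(\LL)),\sigma^\bullet(a))$ holds in $\KK^G(\Sta/\SS)$ for $f\in\mathcal F$ and arbitrary $a$. This again comes out of Ekedahl's Corollary~4.4 once $[1/S_n]=1$ is in hand, but it is not a purely formal consequence of invertibility of $[\Gl_k(n)]$.
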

\begin{proof}
 The arguments in the proof of Corollary 4.4 in \cite{Ekedahl2} go literally through in the $\GG_m$-equivariant case. The crucial observation is $[1/S_n]=1$ in $\KK^G(\Sta/k)$ for any $n\ge 1$ (see \cite[Theorem 4.3]{Ekedahl2}). 
\end{proof}

\begin{Definition}
 A filtered $\lambda$-ring is a $\lambda$-ring $(R,\sigma)$ together with a descending filtration $\ldots \subset F^1R \subset F^0R=R$ such that
 \begin{itemize}
  \item[(i)] $F^mR\cdot F^nR \subset F^{m+n}R$,
  \item[(ii)] $\sigma^m(F^nR)\subset F^{mn}R$.
 \end{itemize}
If $R$ with the induced topology is complete\footnote{so that in particular $\cap_{n\ge 0}F^nR=0$} $(R,\sigma,F^\cdot R)$ is called a complete filtered $\lambda$-ring. In that case we define the operation
\[ \Sym:R':= F^1R \longrightarrow R, a\longmapsto \sigma_a(1)=\sum_{n\ge 0} \sigma^n(a).\]
\end{Definition}

\begin{Lemma}
If $(R,\sigma,F^\cdot R)$ is a complete filtered $\lambda$-ring, $\Sym:R'\rightarrow R$ is an isomorphism from the additive group $(R',+)$ onto the multiplicative subgroup $1+R'$ of the group of units $(R^\times,\cdot)$. 
\end{Lemma}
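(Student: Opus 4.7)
The plan is to verify the four standard things in order: well-definedness, target, homomorphism property, and bijectivity, using the filtration estimates in an essential way.

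First I would check that $\Sym$ is well-defined and lands in $1+R'$. For $a\in R'=F^1R$, axiom (ii) of the filtered $\lambda$-ring gives $\sigma^n(a)\in F^{n}R$, so the sum $\sum_{n\ge 0}\sigma^n(a)$ converges in the complete topology. From condition (i), $\sigma_a(t)\equiv 1+at \bmod t^2$, hence $\sigma^0(a)=1$, and therefore $\Sym(a)-1=\sum_{n\ge 1}\sigma^n(a)\in F^1R=R'$. So $\Sym(R')\subset 1+R'$, and in particular $\Sym(a)$ is a unit of $R$.

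Next, the homomorphism property $\Sym(a+b)=\Sym(a)\Sym(b)$ is obtained by specializing condition (iii): both sides of $\sigma_{a+b}(t)=\sigma_a(t)\sigma_b(t)$ make sense at $t=1$ because the coefficient in bidegree $(m,n-m)$ of the product lies in $F^{n}R$, so one may rearrange and evaluate. This makes $\Sym\colon (R',+)\to (1+R',\cdot)$ a group homomorphism.

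For injectivity, suppose $\Sym(a)=1$; I claim $a\in F^mR$ for every $m$, which by completeness forces $a=0$. Indeed, if $a\in F^nR$ with $n\ge 1$, then $\sigma^k(a)\in F^{kn}R\subseteq F^{2n}R$ for $k\ge 2$, so the relation
\[ a=-\sum_{k\ge 2}\sigma^k(a) \]
shows $a\in F^{2n}R$; iterating, $a\in F^{2^{j}n}R$ for all $j$.

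The main step, and the one requiring the most care, is surjectivity. Given $u\in 1+R'$, I would construct $b\in R'$ with $\Sym(b)=u$ by a Newton-style iteration. The key estimate is that for $c\in F^{N+1}R$ one has
\[ \Sym(c)\equiv 1+c \pmod{F^{2(N+1)}R}, \]
since $\sigma^k(c)\in F^{k(N+1)}R$ for $k\ge 2$. Starting from $b_0=0$, I build $b_N\in R'$ such that $u\,\Sym(b_N)^{-1}\in 1+F^{N+1}R$; writing $u\,\Sym(b_N)^{-1}=1+c_{N+1}$ with $c_{N+1}\in F^{N+1}R$ and setting $b_{N+1}=b_N+c_{N+1}$ gives, by the homomorphism property and the above estimate,
\[ \Sym(b_{N+1})=\Sym(b_N)\Sym(c_{N+1})=\Sym(b_N)(1+c_{N+1})+F^{2(N+1)}R, \]
so $u\,\Sym(b_{N+1})^{-1}\in 1+F^{N+2}R$. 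The increments lie in $F^{N+1}R$, so by completeness $b:=\lim_N b_N$ exists in $R'$; continuity of $\Sym$ (which follows from $\sigma^n(F^{m}R)\subseteq F^{nm}R$) yields $\Sym(b)=u$. This completes the proof.
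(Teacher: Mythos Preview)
Your proof is correct and follows essentially the same strategy as the paper: both use the filtration estimate $\sigma^n(F^mR)\subset F^{mn}R$ to see that $\Sym$ lands in $1+R'$, read off the homomorphism property from axiom (iii), and build the inverse degree by degree using completeness. The only cosmetic differences are that the paper argues injectivity via the leading graded piece (rather than your doubling trick $a\in F^nR\Rightarrow a\in F^{2n}R$) and writes out the recursive equations for surjectivity explicitly instead of packaging them as a Newton iteration.
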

\begin{proof}
By the properties of $\sigma$ we only need to show that $\Sym:R'\rightarrow 1+R'$ is an isomorphism. Indeed, any $0\neq a\in R'$ has a unique decomposition $a=\sum_{n\ge r} a_n$ with $a_n\in F^nR\setminus F^{n+1}R$, $r\ge 1, a_r\neq 0$. Then $\Sym(a)=1+a_r \mod F^{r+1}R$ is not one and injectivity follows since $\Sym$ is a group homomorphism. For surjectivity we consider $1+\sum_{n\ge 1}b_n$ with $b_n\in F^nR\setminus F^{n+1}R$. To compute $a=\sum_{n\ge 1}a_n$ as above with 
\[ \Sym(a)=1+\sum_{n\ge 1}\sum_{{1\le i_1, \ldots, i_n \atop i_1 + 2i_2 + \ldots + ni_n=n}}\sigma^{i_1}(a_1)\cdots \sigma^{i_n}(a_n)= 1 + \sum_{n\ge 1} b_n=b \] 
we have to solve the system of equations $\sum_{{1\le i_1, \ldots, i_n \atop i_1 + 2i_2 + \ldots + ni_n=n}}\sigma^{i_1}(a_1)\cdots \sigma^{i_n}(a_n)=b_n$ which can be done recursively as $\sigma^1=id$.   
\end{proof}

\begin{Remark} If $R=R\otimes \mathbb{Q}$, the statement of the lemma would also be true for the operation $\exp:R'\ni a\longmapsto \sum_{n\ge 0} \frac{a^n}{n!}\in 1 + R'$. Moreover, due to the similarity between\footnote{Notice that additional relations of the form $[\XX^n/S_n]=[\XX^n]/[S_n]$ would give $1=0$.}   $\sigma^n([\XX])=[\XX^n/S_n]$ and $[\XX]^n/[S_n]=[\XX]^n/n!$, some authors prefer to write  ``Exp'' or ``EXP'' instead of $\Sym$. We prefer the notation $\Sym$ because it is often induced by something categorical (see the previous Example (2)). In addition to this, ``Exp'' sometimes suggests relations in the context of power structures which do not hold. 
\end{Remark}

\begin{Example}
Let $\SS$ be a monoidal Artin stack as before with a descending filtration $\ldots \subset F^1\SS \subset F^0\SS=\SS$ such that $\mu(F^m\SS\times F^n\SS)\subset F^{m+n}\SS$ and $\cap_{n\in \mathbb{N}} F^n\SS=\emptyset$. Then $\KK_0(\Var/\SS)$ is a filtered $\lambda$-ring with $F^n\KK_0(\Var/\SS)=\KK_0(\Var/F^n\SS)$ and similarly for $\hat{\KK}_0(\Var/\SS)$. Moreover, $\hat{\KK}_0(\Var/\SS)$ is complete. %A natural filtration satisfying these properties is given inductively by $F^{n+1}\SS=\mu(F^n\SS\times F^1\SS)$ with $F^1\SS=\SS\setminus \{0\}$. 
One can replace $\Var/\SS$ by any other subcategory of $\St/\SS$ and similarly $\KK_0(\ldots/\SS)$ by $\KK^G(\ldots/\SS)$. 
\end{Example}

\begin{Remark} \label{relative} Everything that has been said so far can be generalized to a relative version. For this let $\Base$ be an Artin stack locally of finite type over $k$ and $\SS$ an Artin stack locally of finite type over $\Base$. If we replace any occurrence of $k$ resp.\ $\Spec(k)$ by $\Base$ we get the relative version. In particular, if $\SS$ is a commutative monoid in the category of Artin stacks locally of finite type over $\Base$, the product and the $\lambda$-ring structure on $\KK_0(\Sta/\SS)$ are given by suitable extensions of
\begin{eqnarray*}
& & [\XX\xrightarrow{f} \SS]\cdot[\YY\xrightarrow{g} \SS] \;=\; [\XX\times_\Base \YY \xrightarrow{f\times_\Base g} \SS \times_\Base \SS \xrightarrow{\;\mu\;} \SS] \\
& & \sigma^n([\XX\xrightarrow{f} \SS]) \;=\; [ \underbrace{\XX \times_\Base \ldots \times_\Base \XX}_{n\;\mbox{\scriptsize times}} /S_n \xrightarrow{f^n/S_n} \underbrace{\SS \times_\Base \ldots \times_\Base \SS}_{n\;\mbox{\scriptsize times}} /S_n \xrightarrow{\;\mu\;} \SS]  
\end{eqnarray*}
with unit $[\Base \xrightarrow{\epsilon} \SS]$. In the special case $\SS=\Base$ we get the so-called fiber product on $\KK_0(\Sta/\SS)$. If $\SS$ and $\Base$ are algebraic spaces, we can replace the quotient stacks by their coarse moduli spaces $\Sym^n_\Base(-)$ to obtain a $\lambda$-ring structure on $\KK_0(\Var/\SS)$. We will only make use of this more general structure in the proof of Theorem \ref{symvanishing} and Theorem \ref{ThomSebastiani} with $\Base=\mathbb{N}_{>0}$. The reader is invited to convince himself that any result in section 4 goes through literally if we replace $k$ with $\Base$.
\end{Remark}

%Let us finally extend our $\lambda$-rings of motives $\KK^{G}(\ldots/\SS)$ by adjoining the inverse of a square root $\LL^{1/2}$ of $\LL$ to $\KK^{G}(\ldots/\SS)$ such that $\sigma^n(a(-\LL^{1/2})^m)=\sigma^n(a)(-\LL^{1/2})^{mn}$ for any $m\in \mathbb{Z}$ which is justified as $\sigma^n(a\LL^m)=\sigma^n(a)\LL^{mn}$ in $\KK_0(\Sta/\SS)$ for any $m\in \mathbb{Z}$. In particular, $-\LL^{1/2}$ is a line element. The minus sign is a matter of convention and the reader should be warned because some authors use the opposite sign.  

\section{Vanishing cycles - the equivariant case}

Let $\SS$ be a commutative monoid over $\BB$ as before, e.g.\ $\SS=\BB$, and consider the monoid $\AA^1_\SS=\SS\times_k\AA^1_k$ locally of finite type over $\BB$ with $\GG_m$ acting fiberwise with weight $d\ge 0$, i.e.\ $g\cdot(u,z)=(u,g^dz)$ ($\AA^1$ is given the structure of a monoid via addition). For notational convenience we will assume  that $\SS$ is an algebraic space or even a variety and any quotient by some permutation group $S_n$ has to be taken without stabilizers. The stacky case is completely analogous. Denote by $\KK_0^{\GG_m,d}(\Var/\AA^1_\SS)$ the corresponding $\lambda$-ring of $\GG_m$-equivariant motives and similarly we denote by $\KK_0^{\GG_m,d}(\Var/\Gl_\SS(1))$ the subgroup supported on the complement $\SS \times_k \GG_m$ of the zero section in $\AA^1_\SS$. The latter group can canonically identified with $\KK_0^{\mu_d}(\Var/\SS)$ by taking the fiber over the unit section $1_\SS:\SS \rightarrow \Gl_\SS(1)$. The inverse operation is given by linear extension of
\[ [X \xrightarrow{f}\SS ] \longmapsto [X\times_{\mu_d} \GG_m \xrightarrow{f\times_k z^d} \SS\times_k\GG_m] \]
with $\GG_m$ acting on $X\times_{\mu_d}\GG_m$ by multiplication on the second factor. Indeed, if $f=(f_1,f_2):X \rightarrow \SS\times_k\GG_m$ is homogeneous of degree $d$, i.e.\ $f(g\cdot x)=g^df(x)=(f_1(x),g^df_2(x))\;\forall \;g\in\GG_m,x\in X$, we can pass to an \'{e}tale cover with Galois group $\mu_d$ and assume that $f$ is homogeneous of degree one. In that case $X$ is isomorphic to $f^{-1}(1)\times_k \GG_m$ with the isomorphism given by $x\longmapsto (f_2(x)^{-1}\cdot x,f_2(x))$. In the sequel we will use, however, a different identification of $\KK_0^{\GG_m,d}(\Var/\Gl_\SS(1))$ with $\KK_0^{\mu_d}(\Var/\SS)$, namely the one described above followed by multiplication with $-1$. We will justify this choice later.\\
Note that for $d|d'$ the $\lambda$-ring $\KK^{\GG_m,d}_0(\Var/\AA^1_\SS)$ maps canonically into $\KK^{\GG_m,d'}_0(\Var/\AA^1_\SS)$ by replacing the $\GG_m$-action on $X$ in $X\xrightarrow{f} \AA^1_\SS$ with the new action $g\star x:= g^{d'/d}\cdot x$. This map is compatible with the subgroups $\KK^{\GG_m,d}_0(\Var/\Gl_\SS(1))$ on which it is actually an embedding. Indeed, if we identify the latter subgroup with $\KK^{\mu_d}_0(\Var/\SS)$ as shown before, the map corresponds to the replacement of the $\mu_d$-action with the $\mu_{d'}$-action by means of the epimorphism $\mu_{d'} \twoheadrightarrow \mu_d$ sending a root to its $d'/d$-th power. A left inverse of this construction is given by modding out the action of the kernel $\mu_{d'/d} \subset \mu_{d'}$ of that epimorphism. The quotient $X/\mu_{d'/d}$ exits as our action was good by assumption. Let us introduce the following notation
\[ \KK_0^{\hat{\mu}}(\Var/\SS) := \varinjlim_d \KK^{\mu_d}_0(\Var/\SS) = \bigcup_d \KK^{\mu_d}_0(\Var/\SS) = \varinjlim_d \KK^{\GG_m,d}_0(\Var/\Gl_\SS(1)).\]
We will now equip $\KK^{\hat{\mu}}_0(\Var/\SS)$ with a new structure of a $\lambda$-ring. To do this we consider the $\lambda$-ring $\KK^{\GG_m,d}_0(\Var/\AA^1_\SS)$ together with the subgroup $\mathfrak{I}_d$ given by the image of the pull-back 
\[ \KK^{\GG_m}_0(\Var/\SS) \ni [Y\xrightarrow{g} \SS] \longmapsto [Y\times_k \AA^1_k \xrightarrow{g\times id} \SS\times_k \AA^1_k]\in \KK^{\GG_m,d}_0(\Var/\AA^1_\SS) \]
with $\GG_m$ acting on $Y\times_k \AA^1_k$  by  $g\cdot (y,z)=(g\cdot y,g^dz)$. The restriction to the zero section $\SS\hookrightarrow \AA^1_\SS$ provides us with a left inverse and we can identify the subgroup $\mathfrak{I}_d$ with $\KK_0^{\GG_m}(\Var/\SS)$. For $d|d'$ we have the commutative diagram
\[ \xymatrix {\KK_0^{\GG_m}(\Var/\SS) \cong \mathfrak{I}_d \ar@{^{(}->}[r] \ar[d] & \KK_0^{\GG_m,d}(\Var/\AA^1_\SS) \ar[d] \\ \KK_0^{\GG_m}(\Var/\SS) \cong \mathfrak{I}_{d'} \ar@{^{(}->}[r] & \KK_0^{\GG_m,d'}(\Var/\AA^1_\SS),  }  
\]
where the vertical maps replace the action of $g\in \GG_m$ with the action of $g^{d'/d}$ as seen above.
\begin{Lemma} \label{lemma}
The subgroup $\mathfrak{I}_d \cong \KK_0^{\GG_m}(\Var/\SS)$  is a $\lambda$-ideal of $\KK^{\GG_m,d}_0(\Var/\AA^1_\SS)$ and if we identify the quotient $\lambda$-ring $\KK^{\GG_m,d}_0(\Var/\AA^1_\SS) / \mathfrak{I}_d$ with the complement $\KK^{\GG_m,d}_0(\Var/\Gl_\SS(1))\cong \KK^{\mu_d}_0(\Var/\SS)$ of $\mathfrak{I}_d$ in $\KK^{\GG_m,d}_0(\Var/\AA^1_\SS)$, the latter gets a natural structure of a $\lambda$-ring. Moreover, we can pass to the inductive limit over $d$ and get a $\lambda$-ring structure on $\KK^{\hat{\mu}}_0(\Var/\SS)$.
\end{Lemma}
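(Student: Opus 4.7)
The plan is to verify three items: that $\mathfrak{I}_d$ is a two-sided ideal in $\KK^{\GG_m,d}_0(\Var/\AA^1_\SS)$, that it is closed under $\sigma^n$ for $n\ge 1$, and that the quotient is canonically identified with the subgroup $\KK^{\GG_m,d}_0(\Var/\Gl_\SS(1))$ supported away from the zero section. Let $\pi\colon \AA^1_\SS \to \SS$ be the projection and $i\colon \SS \hookrightarrow \AA^1_\SS$ the zero section, so $\mathfrak{I}_d$ is the image of $\pi^*$ and $i^*\pi^* = \mathrm{id}$ forces injectivity. For the ideal property, compute the convolution of $\pi^*[Y \xrightarrow{g} \SS] = [Y \times_k \AA^1_k \to \AA^1_\SS]$ with a generator $[X \xrightarrow{f} \AA^1_\SS]$, $f = (f_\SS, f_{\AA^1})$: the product is represented by $Y \times_k X \times_k \AA^1_k \to \AA^1_\SS$, $(y,x,z) \mapsto (g(y)f_\SS(x),\, z + f_{\AA^1}(x))$. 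The substitution $z \mapsto z - f_{\AA^1}(x)$ is a $\GG_m$-equivariant automorphism of $X \times_k \AA^1_k$ over $X$, since $f_{\AA^1}$ is homogeneous of degree $d$; it transforms the product into $\pi^*[Y \times_k X \to \SS]$ with new map $(y,x) \mapsto g(y)f_\SS(x)$, so $\mathfrak{I}_d$ is an ideal.

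For $\sigma^n$-closure, split $\AA^n_k = \AA^1_w \oplus H_0$ with $w = \sum_i z_i$ and $H_0 = \{\sum z_i = 0\}$; this decomposition is $S_n$-equivariant with trivial $S_n$-action on $\AA^1_w$, and $\GG_m$-equivariant of weight $d$ on both summands. Hence
\[ \Sym^n(Y \times_k \AA^1_k) \cong \AA^1_w \times_k (Y^n \times_k H_0)/S_n, \]
and the induced map to $\AA^1_\SS = \SS \times_k \AA^1_k$ reads $(w,[\bar y,\bar u]) \mapsto (\prod_i g(y_i),\, w)$, which is manifestly $\pi^*$ of $[(Y^n \times_k H_0)/S_n \to \SS]$. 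I view this clean factoring of the sum coordinate as the main geometric point of the lemma; without the trivial $\AA^1_w$-factor one could not see $\sigma^n(\pi^*[Y])$ as a pull-back from $\SS$. With both the ideal property and $\sigma^n$-closure established, the standard identity $\sigma^n(a+b) = \sum_{i+j=n}\sigma^i(a)\sigma^j(b) \equiv \sigma^n(a) \pmod{\mathfrak{I}_d}$ for $b \in \mathfrak{I}_d$ shows the $\lambda$-structure descends to the quotient.

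For the quotient identification, I establish the direct-sum decomposition $\KK^{\GG_m,d}_0(\Var/\AA^1_\SS) = \mathfrak{I}_d \oplus \KK^{\GG_m,d}_0(\Var/\Gl_\SS(1))$. Zero intersection follows from $i^*\pi^* = \mathrm{id}$ combined with the vanishing of $i^*$ on classes supported on $\Gl_\SS(1)$. For spanning, scissor gives $[X] = i_*[X|_\SS] + [X|_{\Gl_\SS(1)}]$, while applying scissor to $\pi^*[X|_\SS] = [X|_\SS \times_k \AA^1_k \to \AA^1_\SS]$ yields $\pi^*[X|_\SS] = i_*[X|_\SS] + [X|_\SS \times_k \GG_m \to \AA^1_\SS]$, so $[X] - \pi^*[X|_\SS]$ lies in $\KK^{\GG_m,d}_0(\Var/\Gl_\SS(1))$. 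Under the identification $\KK^{\GG_m,d}_0(\Var/\Gl_\SS(1)) \cong \KK^{\mu_d}_0(\Var/\SS)$ established earlier in the text, the quotient becomes the stated $\lambda$-ring. Finally, the transition maps $\KK^{\GG_m,d}_0 \to \KK^{\GG_m,d'}_0$ for $d\mid d'$ merely rescale the $\GG_m$-action and hence respect both the ring and $\lambda$-ring structures; by the commutative diagram preceding the lemma they carry $\mathfrak{I}_d$ into $\mathfrak{I}_{d'}$, descend to $\lambda$-ring maps on the quotients, and passage to the colimit yields the desired $\lambda$-structure on $\KK^{\muu}_0(\Var/\SS) = \varinjlim_d \KK^{\mu_d}_0(\Var/\SS)$.
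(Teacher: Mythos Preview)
Your proof is correct and follows essentially the same route as the paper's: the shear $(y,z,x)\mapsto(y,z-f_{\AA^1}(x),x)$ for the ideal property, the $S_n$-equivariant splitting $\AA^n_k=\AA^1_w\oplus H_0$ for $\sigma^n$-closure, the scissor decomposition along the zero section for the complement identification, and the weight-independence of the $\lambda$-structure for the passage to the colimit are exactly the ingredients the paper uses.
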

\begin{proof}
It is not difficult to see that $\KK_0^{\GG_m,d}(\Var/\Gl_\SS(1))$ is indeed a complement of $\mathfrak{I}_d$ in $\KK_0^{\GG_m,d}(\Var/\AA^1_\SS)$ and the projection onto this complement is given by linear extension of
\[ [ X\xrightarrow{\;f\;} \AA^1_\SS] \longmapsto \Big([f^{-1}(\Gl_\SS(1)) \xrightarrow{\;f\;} \Gl_\SS(1)] - [f^{-1}(0)\times_k \GG_m \xrightarrow{f\times id} \SS\times_k\GG_m]\Big) \]
respectively of 
\[ [ X\xrightarrow{\;f\;} \AA^1_\SS] \longmapsto \Bigr([f^{-1}(0) \xrightarrow{p_\SS\circ f\;} \SS] - [f^{-1}(1) \xrightarrow{p_\SS\circ f} \SS]\Bigl) \]
after identifying $\KK_0^{\GG_m,d}(\Var/\Gl_\SS(1))$ with $\KK_0^{\mu_d}(\Var/\SS)$. Here, $\mu_d\subset \GG_m$ acts both on $f^{-1}(0)$ and on $f^{-1}(1)$. It remains to show that $\mathfrak{I}_d$ is a $\lambda$-ideal. This can be done by looking at generators $[X\xrightarrow{f} \AA^1_\SS]$ and $[Y\times_\SS \AA^1_\SS \xrightarrow{p_{\AA^1_\SS}} \AA^1_\SS]$  of $\KK^{\GG_m,d}_0(\Var/\AA^1_\SS)$ respectively $\mathfrak{I}_d$, with $[Y\xrightarrow{g} \SS]\in \KK_0^{\GG_m}(\Var/\SS)$. For the product 
\[ [X\xrightarrow{f} \AA^1_\SS] \cdot [Y\times_\SS \AA^1_\SS \xrightarrow{p_{\AA^1_\SS}} \AA^1_\SS] = [X\times_\BB (Y \times_\SS \AA^1_\SS) \xrightarrow{f p_X + p_{\AA^1_\SS}} \AA^1_\SS] \]
we mention 
\[ [X\times_\BB (Y \times_\SS \AA^1_\SS) \xrightarrow{f p_X + p_{\AA^1_{\SS}}} \AA^1_\SS] = [(X\times_\BB Y) \times_\SS \AA^1_\SS \xrightarrow{p_{\AA^1_\SS}} \AA^1_\SS] \in \mathfrak{I}_d\] 
which is obtained by composing $p_{\AA^1_\SS}$ with the $\GG_m$-equivariant isomorphism 
\[ X\times_\BB (Y \times_\SS \AA^1_\SS) \xrightarrow{(p_X, p_Y, f p_X + p_{\AA^1_\SS})} (X\times_\BB Y) \times_\SS \AA^1_\SS \]
as $g\in \GG_m$ acts by multiplication with $g^d$ on $\AA^1_\SS$. Similarly, we get for all $n>0$ 
\begin{eqnarray*} && [\Sym^n_\BB(Y \times_\SS \AA^1_\SS) \xrightarrow{\Sym^n_\BB p_{\AA^1_\SS}} \Sym^n_\BB \AA^1_\SS \xrightarrow{\;+\;} \AA^1_\SS ] \\
&=& [ (Y^n \times_\BB \AA^n_\BB)/S_n \xrightarrow{(g^n\times id_{ \AA^n_\BB})/S_n}  (\SS^n \times_\BB \AA^n_\BB)/S_n \xrightarrow{\mu\times+} \SS\times_\BB\AA^1_\BB \cong \AA^1_\SS] \\
& = & [ ( Y^n \times_\BB \AA^n_\BB)^0/S_n\times_\BB \AA^1_\BB \xrightarrow{\bigl(g^n \times id_{\AA^{n-1}_\BB}\bigr)/S_n \times id_{\AA^1_\BB}}  (\SS^n \times_\BB \AA^n_\BB)^0/S_n\times_\BB \AA^1_\BB  \\ 
& & \hspace{6.5cm}\xrightarrow{\mu\circ p_{ \SS^n} \times id_{\AA^1_\BB}} \SS\times_\BB \AA^1_\BB \cong \AA^1_\SS] \\
& = & [( Y^n \times_\BB \AA^n_\BB)^0/S_n \times_\SS \AA^1_\SS \xrightarrow{p_{\AA^1_\SS}} \AA^1_\SS]\in \mathfrak{I}_d, 
\end{eqnarray*}
where we used the notation 
\[ (Y^n\times_\BB \AA^n_\BB)^0:=\{(y_1,\ldots,y_n,z_1,\ldots,z_n) \in Y^n\times_\BB \AA^n_\BB \mid z_1 + \ldots z_n=0 \} \] and similarly for $(\SS^n\times_\BB \AA^n_\BB)^0$ as well as the obvious  $\GG_m\times S_n$-equivariant isomorphism $(Y^n\times_\BB \AA^n_\BB)^0\times_\BB \AA^1_\BB \longrightarrow Y^n \times_\BB \AA^n_\BB$ sending $((y_1,\ldots,y_n,z_1,\ldots,z_n),z)$ to $(y_1,\ldots,y_n,z_1 + z/n,\ldots, z_n + z/n)$.\\
In particular, for any $a\in \KK_0^{\GG_m,d}(\Var/\AA^1_\SS)$ and any $b\in \mathfrak{I}_d$ one has $\sigma^p(a+b)=\sum_{n=0}^p \sigma^{p-n}(a)\sigma^{n}(b)\equiv\sigma^p(a) \mod \mathfrak{I}_d$ and the residue class of $\sigma^p(a)$ depends only on the residue class of $a$ in $\KK_0^{\GG_m,d}(\Var/\AA^1_\SS)/\mathfrak{I}_d$. \\
Neither the product nor the $\lambda$-ring structure on $\KK_0^{\GG_m,d}(\Var/\AA^1_\SS)$ depend on the weight $d$. Thus, $\KK^{\GG_m,d}_0(\Var/\AA^1_\SS) \longrightarrow \KK^{\GG_m,d'}_0(\Var/\AA^1_\SS)$ is a $\lambda$-ring homomorphism and the same holds for the quotients $\KK^{\GG_m,d}_0(\Var/\AA^1_\SS)/\mathfrak{I}_d \longrightarrow \KK^{\GG_m,d'}_0(\Var/\AA^1_\SS)/\mathfrak{I}_{d'}$. The statement for the inductive limit follows. 
\end{proof}

For the sake of completeness we will also give explicit formulas for the product and the $\lambda$-ring structure in terms of $\KK_0^{\hat{\mu}}(\Var/\SS)$. For natural numbers $d,n>0$ we introduce the following notation
\begin{eqnarray*} 
 J_{n,0}^d&:=&\{ (z_1,\ldots,z_n)\in \GG_m^n \mid z_1^d + \ldots + z_n^d=0 \} \\
 J_{n,1}^d&:=&\{ (z_1,\ldots,z_n)\in \GG_m^n \mid z_1^d + \ldots + z_n^d = 1 \}.
\end{eqnarray*}
The groups $\mu_d^n$ and $S_n$ obviously act on $J^d_{n,0}$ and $J^d_{n,1}$. Moreover, the action of the diagonal subgroup $\mu_d\hookrightarrow \mu_d^n$ commutes with the action of $\mu_d^n$ and of $S_n$. If $[X\rightarrow \SS]$ and $[Y\rightarrow \SS]$ are two elements of $\KK_0^{\hat{\mu}}(\Var/\SS)$ with $\hat{\mu}$ acting on $X$ and $Y$ by means of $\mu_d$, their product is given by 
\[ [X\rightarrow \SS]\cdot [Y\rightarrow \SS]= [(X\times_\BB Y)\times_{\mu_d^2}J^d_{2,0} \longrightarrow \SS] -  [(X\times_\BB Y)\times_{\mu_d^2}J^d_{2,1} \longrightarrow \SS]. \]
with $\hat{\mu}$ acting on $J^d_{2,0}$ resp.\ $J^d_{2,1}$ via $\mu_d\hookrightarrow \mu_d^2$. Similarly, 
\[ \sigma^n(-[X\rightarrow \SS])= [(X^n\times_{\mu_d^n}J^d_{n,0})/S_n\longrightarrow \SS] - [(X^n\times_{\mu_d^n}J^d_{n,1})/S_n\longrightarrow \SS] \]
with $\hat{\mu}$ acting on $J^d_{n,0}$ resp.\ $J^d_{n,1}$ via $\mu_d \hookrightarrow \mu_d^n$.  \\
Notice that we used our sign convention when identifying $\KK_0^{\GG_m,d}(\Var/\Gl_\SS(1))$ with $\KK^{\mu_d}_0(\Var/\SS)$. The reason is the following. The $\lambda$-ring $\KK_0^{\GG_m}(\Var/\SS)$ maps as a $\lambda$-ring to $\KK_0^{\GG_m,d}(\Var/\AA^1_\SS)$ by means of the push forward along the homomorphism $\SS \xrightarrow{\,0\,} \AA^1_\SS$ of monoids as seen before. %\footnote{For $[X\rightarrow \SS]\in \KK_0(\Var/\SS)$ we equip $X$ with the trivial $\GG_m$-action to obtain an element $[X\rightarrow \SS \xrightarrow{0} \AA^1_\SS]\in \KK_0^{\GG_m,d}(\Var/\SS)$ for any $d\ge 0$.}  
By composing it with the quotient map $\KK_0^{\GG_m,d}(\Var/\AA^1_\SS) \longrightarrow \KK_0^{\GG_m,d}(\Var/\Gl_\SS(1))\cong \KK_0^{\mu_d}(\Var/\SS)$ we get a natural $\lambda$-ring homomorphism $\KK_0^{\GG_m}(\Var/\SS) \longrightarrow \KK_0^{\mu_d}(\Var/\SS)$ which by our sign convention is just the obvious map considering any variety with good $\GG_m$-action as a variety with good $\mu_d$-action by restricting the action to $\mu_d\subset \GG_m$. In particular, our sign convention guarantees that the $\lambda$-ring structure on $\KK_0^{\hat{\mu}}(\Var/\SS)$ is just an ``extension'' of the one on $\KK_0(\Var/\SS) \longrightarrow \KK_0^{\GG_m}(\Var/\SS)$. 

\begin{Remark}
The ring $\KK_0^{\muu}(\Var/\SS)$ and its quotient $\KK^{\muu}(\Var/\SS)$ has been introduced in \cite{GuibertLoeserMerle} to prove a kind of Thom-Sebiastiani theorem. The product was already defined by Looijenga \cite{Looijenga1} in a different way. This section and, in particular, Lemma \ref{lemma} was inspired by the paper \cite{KS1} of Kontsevich and Soibelman which simplified the ad hoc construction of Looijenga a lot. They show that $\KK_0(\Var/\SS) \hookrightarrow \KK_0(\Var/\AA^1_\SS)$ is an ideal. Hence, they get a ring structure on $\KK_0(\Var/\Gl_\SS(1))$ and, by forgetting the $\GG_m$-action, we obtain a ($\lambda$-)ring homomorphism $\KK_0^{\muu}(\Var/\SS)\longrightarrow \KK_0(\Var/\Gl_\SS(1))$. 
\end{Remark}

As mentioned before, everything that has been said so far generalizes easily to $\KK_0^{\hat{\mu}}(\St/\SS)$ and $\KK_0^{\hat{\mu}}(\Sta/\SS)$. Moreover, the construction of the $\lambda$-ring structure is compatible with relation (\ref{relation}) in section 2 allowing us to put a new $\lambda$-ring structure on for example $\KK^{\hat{\mu}}(\Var/\SS)$. By Remark \ref{Kaiso4} the $\lambda$-ideal of $\KK^{\GG_m,d}(\Var/\AA^1_\SS)$ analoguos to $\mathfrak{I}_d$ is isomorphic to $\KK(\Var/\SS)$ and we can replace the $\hat{\mu}$-actions on $[(X^n\times_{\mu_d^n}J^d_{n,0})/S_n\longrightarrow \SS]$ and on $[(X\times_\BB Y)\times_{\mu_d^2}J^d_{2,0} \longrightarrow \SS]$ with the trivial one. Using relation (\ref{relation}) it is also not difficult to see that $[(X\times_\BB Y)\times_{\mu_d^2}J^d_{2,0} \longrightarrow \SS]$ coincides with $(\LL-1)[X\times_{\mu_d} Y \longrightarrow \SS]$ in $\KK^{\muu}(\Var/\SS)$ (cf.\ \cite[section 7]{Looijenga1}).

\begin{Example}
The residue class of the map $[X=\AA^1_\BB \xrightarrow{f=z^2} \AA^1_\BB]\in \KK^{\GG_m,2}(\Var/\AA^1_\BB)$ in $\KK^{\hat{\mu}}(\Var/\BB)$ is given by $[f^{-1}(0)]-[f^{-1}(1)]=1- [\mu_2]$ with $\mu_2$ carrying the obvious $\hat{\mu}$-action. Similarly, the residue class of $[\AA^2_\BB \xrightarrow{g=z_1^2+z_2^2} \AA^1_\BB]$ can be easily computed by coordinate change to be $[g^{-1}(0)] - [g^{-1}(1)]=\LL$ because the class of the variety $\Gl_\BB(1)=\GG_m$ with $\mu_2$-action given by sign change is the same as the class of $\LL-1$ by (2) and the scissor relation. On the other hand, the latter residue class is by definition of the convolution product and Lemma (\ref{lemma}) just the square of $\LL^{1/2}:=1-[\mu_2]$. We will see later that $-\LL^{1/2}=[\mu_2]-1$ is a line element.   
\end{Example}
Generalizing the previous example, the residue class in $\KK^{\muu}(\Var/k)$ of a nondegenerate quadratic function $f:\AA^n_k \rightarrow \AA^1_k$ is given by $\LL^{n/2}$. However, the critical scheme $\Crit(f)\subset \AA^n_k$ is always $\Spec(k)$ independent of $n$. To get some measure of $\Spec(k)$ independent of its representation as a critical locus, we should normalize the residue class motivating the following definition.

\begin{Definition}
Let $X$ be an equidimensional variety locally of finite type over $\SS$. Assume that $X$ carries a good $\GG_m$-action. Let $X^s\subset X$ be a $\GG_m$-invariant locally closed subvariety of finite type over $\SS$. If $f:X \rightarrow \AA^1_\SS$ is a $\GG_m$-equivariant map over $\SS$ with $\GG_m$ acting on $\AA^1_\SS$ with some positive weight $d$, we denote by $\int_{X^s}\phi^{eq}_f\in \KK^{\hat{\mu}}(\Var/\SS)[\LL^{-1/2}]$ the rescaled residue class of $[X^s \xrightarrow{\;f|_{X^s}\;} \AA^1_\SS]$, i.e.\ 
\[ \int_{X^s}\phi^{eq}_f:= \LL^{-\frac{\dim X}{2}} \big( [f^{-1}(0)\cap X^s \xrightarrow{\;pr_{\mathfrak{M}}\circ f\;} \SS] - [f^{-1}(1)\cap X^s \xrightarrow{\;pr_{\mathfrak{M}}\circ f\;} \SS] \big) \]
with $\hat{\mu}$ acting nontrivially only on $f^{-1}(1)\cap X^s$ via $\mu_d$. Moreover, if $\XX^s\subset \XX$ are Artin stacks with affine stabilizers satisfying the same properties, we define $\int_{\XX^s}\phi^{eq}_f\in \KK^{\muu}(\Sta/\SS)$ in the same way.
\end{Definition}

%Notice that since $\LL$ is invertible in $\KK^{\muu}(\St/\SS)$, so is $\LL^{1/2}$. 
As an immediate consequence of Lemma \ref{lemma} and the definition we obtain the following proposition.
\begin{Proposition} \label{basicprop}
Let $\XX,\YY$ and $\ZZ$ be equidimensional Artin stacks locally of finite type over $\SS$ with affine stabilizers. Assume, moreover, that $\XX,\YY$ and $\ZZ$ are equipped with a good $\GG_m$-action and that $\XX^s,\YY^s,\ZZ^s$ are locally closed $\GG_m$-invariant substacks in $\XX,\YY$ resp.\ $\ZZ$. Let $\pi:\ZZ \rightarrow \XX$, $f:\XX \rightarrow \AA^1_\SS$ and $g:\YY \rightarrow \AA^1_\SS$ be $\GG_m$-equivariant maps with respect to suitable actions of $\GG_m$ on $\AA^1_\SS$ of positive weight. By choosing the $\GG_m$-actions correspondingly, we can assume that $f+g:\XX\times_\BB \YY \xrightarrow{f\times g}  \AA^1_\SS \times_\BB \AA^1_\SS \xrightarrow{\:+\:} \AA^1_\SS$ and $\SSymm_+^n(f):\Symm^n_\BB \XX \xrightarrow{\Symm^n(f)} \Symm^n_\BB \AA^1_\SS \xrightarrow{\:+\:} \AA^1_\SS$ are also $\GG_m$-equivariant. Then
\begin{enumerate}
\item $\int_{\XX^s}\phi^{eq}_f=\LL^{-\frac{\dim\XX}{2}} [\XX^s \rightarrow \SS]$ if $f\equiv 0$,
\item $\int_{\XX^s\sqcup \XX^s_2} \phi^{eq}_f=\int_{\XX^s}\phi^{eq}_f + \int_{\XX^s_2} \phi^{eq}_f$ for any $\GG_m$-invariant locally closed substack $\XX^s_2\subset \XX$ not intersecting $\XX^s$,
\item $\int_{\XX^s\times \YY^s}\phi^{eq}_{f+g}=\int_{\XX^s}\phi^{eq}_f \cdot \int_{\YY^s}\phi^{eq}_g$ (Thom--Sebastiani),
\item $\LL^{n\dim \XX/2}\int_{\Symm^n \XX^s} \phi^{eq}_{\SSymm_+^n(f)}=\sigma^n( \LL^{\dim  \XX/2}\int_{\XX^s}\phi^{eq}_f)$ for all $n \ge 0$ and a similar statement holds for $\Sym^n(f):\Sym^n \XX \longrightarrow \AA^1_\SS$ if $\XX$ and $\SS$ are algebraic spaces,
\item $\int_{\ZZ^s}\phi^{eq}_{f\circ \pi}= \LL^{-\frac{r}{2}}[\mathcal{F}]\int_{\XX^s}\phi^{eq}_f$ if $\pi|_{\ZZ^s}:\ZZ^s\rightarrow \XX^s$ is a Zariski locally trivial fibration with $r$-dimensional fiber\footnote{If the fiber $F$ carries a nontrivial $\GG_m$-action, its class in $\KK^{\muu}(\Var/k)[\LL^{-1/2}]$ is given by the residue class of $[F\rightarrow 0]\in \KK^{\GG_m,1}(\Var/\AA^1_k)$.} $\mathcal{F}$. The same holds if $\pi|_{\ZZ^s}$ is a vector bundle or a $\Gl_k(n)$-principal bundle.
\end{enumerate}
\end{Proposition}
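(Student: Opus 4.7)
The strategy is to reduce everything to the $\lambda$-ring structure on the quotient $\KK^{\GG_m,d}(\Var/\AA^1_\SS)/\mathfrak{I}_d \cong \KK^{\muu}(\Var/\SS)$ established in Lemma \ref{lemma}. The key observation is that, up to the factor $\LL^{-\dim \XX/2}$, the element $\int_{\XX^s}\phi^{eq}_f$ is precisely the image of $[\XX^s \xrightarrow{f} \AA^1_\SS]$ under the projection $\KK^{\GG_m,d}(\Var/\AA^1_\SS) \twoheadrightarrow \KK^{\muu}(\Var/\SS)$, via the explicit formula for the residue class derived in the proof of Lemma \ref{lemma}. Parts (1) and (2) are then immediate from the definition: if $f\equiv 0$ then $f^{-1}(0)\cap\XX^s = \XX^s$ and $f^{-1}(1)\cap\XX^s = \emptyset$; and (2) is the scissor relation applied to $f^{-1}(0)$ and $f^{-1}(1)$ intersected with $\XX^s\sqcup\XX^s_2$.

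For (3) and (4), recall that the convolution product on $\KK^{\GG_m,d}(\Var/\AA^1_\SS)$ uses the monoid structure on $\AA^1_\SS$ combining the product on $\SS$ with addition on $\AA^1$; consequently the product of $[\XX^s \xrightarrow{f} \AA^1_\SS]$ and $[\YY^s \xrightarrow{g} \AA^1_\SS]$ is exactly $[\XX^s \times_\BB \YY^s \xrightarrow{f+g} \AA^1_\SS]$, and similarly $\sigma^n([\XX^s \xrightarrow{f} \AA^1_\SS]) = [\Symm^n \XX^s \xrightarrow{\Symm^n_+(f)} \AA^1_\SS]$ by Example \ref{lambdarings}(5). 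Since the projection to $\KK^{\muu}(\Var/\SS)$ is a $\lambda$-ring homomorphism by Lemma \ref{lemma}, the residue class of the product (resp.\ of $\sigma^n$) equals the product (resp.\ $\sigma^n$) of the residue classes. Multiplying through by the appropriate powers of $\LL^{-1/2}$ and using $\dim(\XX\times_\BB\YY) = \dim\XX + \dim\YY$ and $\dim\Symm^n\XX = n\dim\XX$ gives (3) and (4). The variant of (4) with $\Sym^n$ in place of $\Symm^n$ follows from the algebraic space version of the $\lambda$-structure in Example \ref{lambdarings}(4). The extension of $\sigma^n$ to $\KK^{\muu}(\Var/\SS)[\LL^{-1/2}]$ used here is justified by $\LL$ being a line element together with $-\LL^{1/2} = [\mu_2]-1$ being a line element, so that the $\sigma^n(\LL^{m/2})$ behave as formal powers.

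For (5), the Zariski-local triviality of $\pi|_{\ZZ^s}$ together with the scissor relation gives $[\pi^{-1}(A) \to \SS] = [\mathcal{F}]\cdot[A \to \SS]$ in $\KK^{\muu}(\Var/\SS)[\LL^{-1/2}]$ for every locally closed $\GG_m$-invariant $A \subset \XX^s$, where the $\muu$-action on the left is the product of the action on $\mathcal{F}$ (interpreted as in the footnote) and on $A$. Applied to $A = f^{-1}(c)\cap\XX^s$ for $c = 0, 1$, and combined with $\dim\ZZ = \dim\XX + r$, this yields $\int_{\ZZ^s}\phi^{eq}_{f\circ\pi} = \LL^{-r/2}[\mathcal{F}]\int_{\XX^s}\phi^{eq}_f$. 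The vector bundle case uses relation (\ref{relation}) and $[\mathcal{F}] = \LL^r$, while the $\Gl_k(n)$-principal bundle case uses Lemma \ref{principal} and $[\mathcal{F}] = [\Gl_k(n)]$; both apply even when global Zariski-trivialization is unavailable.

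The main subtlety is keeping track of the $\muu$-actions under the various identifications (in particular the sign-convention identification $\KK_0^{\GG_m,d}(\Var/\Gl_\SS(1))\cong\KK_0^{\mu_d}(\Var/\SS)$), and verifying that the $\lambda$-operations extend coherently to the localization at $\LL^{1/2}$ used in (4); both are handled by the infrastructure developed in sections 2--4, so once the key observation identifying $\LL^{\dim\XX/2}\int_{\XX^s}\phi^{eq}_f$ with the residue class of $[\XX^s\to\AA^1_\SS]$ is in place, the remainder is bookkeeping.
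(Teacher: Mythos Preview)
Your proof is correct and follows essentially the same approach as the paper's: all five parts reduce to the observation that $\LL^{\dim\XX/2}\int_{\XX^s}\phi^{eq}_f$ is the residue class of $[\XX^s \to \AA^1_\SS]$ in $\KK^{\GG_m,d}(\Var/\AA^1_\SS)/\mathfrak{I}_d$, together with Lemma \ref{lemma} for the $\lambda$-ring structure and relation (\ref{relation}) resp.\ Lemma \ref{principal} for the bundle cases in (5). One minor point: your aside about $-\LL^{1/2}$ being a line element is unnecessary (the prefactors in (4) are arranged so both sides already lie in $\KK^{\muu}(\Sta/\SS)$ before inverting $\LL^{1/2}$) and would in any case be circular, since that fact is established in the Example immediately following this Proposition using part (4) itself.
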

\begin{proof}
Properties (1),(2),(3),(4) and (5) follow directly from the definition and Lemma \ref{lemma}. If $\pi|_{\ZZ^s}$ is a vector or a principal bundle, we use either the relation (\ref{relation}) or Lemma \ref{principal}. 
\end{proof}

\begin{Example}
Let $f:\AA^n_\BB \rightarrow \AA^1_\BB$ be a quadratic form of rank $r$. Since $\bar{k}=k$,  $\int_{\AA^n_\BB}\phi^{eq}_f=\LL^{\frac{n-r}{2}}$ by equation (1) and (3) of Prop.\ \ref{basicprop}. Let us now consider the case $n=r=1$. We use the identification $\Sym^q \AA^1_k\cong \AA^q_k$ given by 
\[ \AA^q_k\ni(z_1,\ldots,z_q)\longmapsto (z_1+\ldots+z_q, \ldots, z_1^q+\ldots+ z_q^q)\in \AA_k^q, \]
such that $\GG_m$ acts with weights $1,2,\ldots,q$ on $\AA^q_k$. Hence, the function $\SSym_+^q(f):\AA^q_k \ni(z_1,\ldots, z_q)\longmapsto z_2\in \AA^1_k$ is in the $\lambda$-ideal $\mathfrak{I}_2$ for $q>1$ and $\int_{\AA^q_k}\phi^{eq}_{\SSym_+^q(f)}=0$ follows. By Proposition \ref{basicprop} (4) we obtain $\sigma_{1-[\mu_2]}(t) = 1 + (1-[\mu_2])t$ and, thus, $\sigma_{[\mu_2]-1}(t) = 1 / (1 - ([\mu_2]-1)t )$, in other words, $[\mu_2]-1$ is a line element in $\KK^{\muu}(\Var/k)$ while $1-[\mu_2]$ is not. Notice that $\KK^{\muu}(\Var/k)[\LL^{-1}]=\KK^{\muu}(\Var/k)[\LL^{-1/2}]$ and $\sigma^n(a(-\LL^{1/2})^m)=\sigma^n(a)(-\LL^{1/2})^{mn}$ for all $m\in\mathbb{Z},n\in \mathbb{N}$ and $a\in\KK^{\muu}(\Var/k)[\LL^{-1/2}]$.
\end{Example}

Let us finally show that the Euler characteristic (with compact support) provides a $\lambda$-ring homomorphism $\KK^{\muu}(\Var/k)\longrightarrow \mathbb{Z}$ with respect to the exotic $\lambda$-ring structure on $\KK^{\muu}(\Var/k)$ introduced in this section. Consider the group $\CF^{\GG_m}(\AA^1_k,\mathbb{Z})$ of $\GG_m$-invariant $\mathbb{Z}$-valued constructible functions $f$ on $\AA^1_k$. Such a function is determined by its values on $0$ and $1$. As in Example \ref{lambdarings} (3) we equip this group with a $\lambda$-ring structure. Note that the unit is given by the characteristic function $\delta_0$ of $\{0\}\subset \AA^1_k$. Similarly to Lemma \ref{lemma} one shows that the subgroup of constant functions is a $\lambda$-ideal. The quotient group can be identified with $\mathbb{Z}$ by taking the difference $f(0)-f(1)$. Thus, $\mathbb{Z}$ gets an induced $\lambda$-ring structure with unit $1=\delta_0(0)-\delta_0(1)$. By the basic properties of $\lambda$-rings there is a unique $\lambda$-ring structure on $\mathbb{Z}$ with unit $1$ given by Example \ref{lambdarings} (1). Hence, we obtain a $\lambda$-ring homomorphism $\CF^{\GG_m}(\AA^1_k,\mathbb{Z}) \longrightarrow \mathbb{Z}$ vanishing on constant functions.\\ 
For any $d>0$ there is obviously a $\lambda$-ring homomorphism $\KK^{\GG_m,d}(\Var/\AA^1_k)\longrightarrow \CF^{\GG_m}(\AA^1_k,\mathbb{Z})$ by taking the Euler characteristic fiberwise. As $\mathfrak{I}_d$ maps to the subgroup of constant functions, we get a $\lambda$-ring homomorphism $\chi:\KK^{\mu_d}(\Var/k)\longrightarrow \mathbb{Z}$ by passing to the quotients. Taking our identifications into account, $\chi$ maps a variety with good $\mu_d$-action to its Euler characteristic. This is independent of $d$ and we can finally pass to the limit $d\to \infty$.  \\ 
This construction can be extended to get $\lambda$-ring homomorphisms to Grothendieck groups of $\muu$-equivariant  resp.\ fractional mixed Hodge structures. See \cite{DavisonMeinhardt2} for more details.

\section{Vanishing cycles - the general case}

Let $X$ be a smooth $k$-variety of finite type and dimension $d$ and let $f:X\rightarrow \AA_k^1$ be a regular map. Denote by $X_0$ the (reduced) fiber over $0\in \AA_k^1$ and by $\Crit(f)$ the critical locus of $f$. The latter is contained in finitely many fibers of $f$ and by shrinking $X$ and replacing $f$ with $f-c$ for $c\in \AA_k^1$ constant, we can assume $\Crit(f)\subset X_0$. 
Note that for $k=\mathbb{C}$ the cohomology of the classical nearby cycle sheaf on $X_0$ is more or less by definition the cohomology of a nearby fiber $f^{-1}(\varepsilon)$ with $0\neq \varepsilon\in \AA^1_k$ very small. A naive motivic replacement could be the motive of the fiber $f^{-1}(\varepsilon)$. But this motive depends on $\varepsilon$ and there is in general no way to consider $f^{-1}(\varepsilon)$ as a motive on $X_0$ generalizing the classical nearby cycle sheaf. However, there is one exception. For $\GG_m$-equivariant $f$ the motive of $f^{-1}(\varepsilon)$ is independent of $\varepsilon\neq 0$ and this was our motivation for the (integral over the) naive vanishing cycle $\int_{X}\phi^{eq}_f$ defined in the previous section for $\GG_m$-equivariant functions. In the general case Denef and Loeser \cite{DenefLoeser2},\cite{DenefLoeser1} constructed a motivic version of the sheaf of vanishing cycles associated to $f$, i.e.\ a motive $\phi_f$ in $\KK^{\muu}(\Var/X_0)[\LL^{-1/2}]$, by considering certain arc-spaces. We will give a slightly different approach to it using various $\GG_m$-equivariant functions associated to $f$ and their naive vanishing cycles.\\
For $X\rightarrow \AA_k^1$ and $n\ge0$ we denote by $\Lo_n(X)$ the space of arcs of length $n$ in $X$, i.e.\ the scheme representing the functor $Y\longmapsto \Hom_{\Sch}(Y\times_k \Spec(k[t]/(t^{n+1})), X)$. By functoriality of the construction there is a morphism $\Lo_n(f):\Lo_n(X)\longrightarrow \Lo_n(\AA_k^1)\cong \AA^{n+1}_k$. Denote by $\Lo_{n,\GG_m}(X) \subset \Lo_n(X)$ the preimage of $\{0\} \times_k \ldots \times_k\{0\}\times_k \GG_m \subset \AA^{n+1}_k\cong \Lo_n(\AA^1_k)$, in other words, the subvariety of arcs $\gamma$ such that $f\circ \gamma$ is induced by $k[X]\ni P(X)\mapsto P(zt^n) \mod t^{n+1}\in k[t]/(t^{n+1})$ for some $z\in \GG_m$. \\
Let us also introduce the obvious projections $\pi^n_m:\Lo_n(X) \longrightarrow \Lo_m(X)$ for $m\le n$ and write $\pi^n$ for $\pi^n_0$. For any $\gamma \in \Lo_n(X)|_{X_0}:= (\pi^n)^{-1}(X_0)$ the arc $\Lo_n(f)(\gamma)=f\circ \gamma$ is either zero and $\gamma$ was an arc in $X_0$ or there is a first nonzero coefficient and $\gamma\in (\pi^n_m)^{-1}(\Lo_{m,\GG_m}(X))$ for some $1\le m\le n$. The latter space is an affine bundle with fiber $\AA^{(n-m)d}_k$ over $\Lo_{m,\GG_m}(X)$ as $X$ is smooth. In other words, we have the following stratification
\begin{equation} \label{stratification} \Lo_n(X)|_{X_0}=(\pi^n)^{-1}(X_0) = \Lo_n(X_0) \sqcup \sqcup_{m=1}^n (\pi^n_m)^{-1}(\Lo_{m,\GG_m}(X)). \end{equation}
The space on the left hand side is also an affine fibration with fiber $\AA^{nd}_k$ over $X_0$. Taking the motives relative to  $X_0$ yields
\[ [X_0\rightarrow X_0]=\frac{[(\pi^n)^{-1}(X_0)\rightarrow X_{0}]}{\LL^{nd}} = \frac{[\Lo_n(X_0) \rightarrow X_0]}{\LL^{nd}} + \sum_{m=1}^n \frac{[\Lo_{m,\GG_m}(X)\rightarrow X_0]}{\LL^{md}}.\] 
By multiplying this equation with the formal variable $T^n$ and summing over $n\ge 1$, we finally get the equation
\begin{eqnarray}  \frac{[X_0\rightarrow X_0]T}{1-T} &=& Z_f^{naive}(T)\frac{1}{1-T} + J_{X_0}(\LL^{-d}T)-[X_0\rightarrow X_0]  \;\mbox{ or } \nonumber \\ 
J_{X_0}(\LL^{-d}T) &=& \frac{[X_0\rightarrow X_0] - Z_f^{naive}(T)}{1-T},
\end{eqnarray}
where we used the shorthands $J_{X_0}(T)=\sum_{n\ge 0} [\Lo_n(X_0) \rightarrow X_0] T^n$ and $Z_f^{naive}(T)=\sum_{n\ge 1} [\Lo_{n,\GG_m}(X) \rightarrow X_0]\LL^{-nd}T^n$ as in \cite[Definition 3.2.1]{DenefLoeser2}.\\
We also define a map $f_n:\Lo_n(X) \longrightarrow \AA^1_k$ by the composition 
\[\Lo_n(X)\xrightarrow{\Lo_n(f)} \Lo_n(\AA^1_k)\cong \AA^{n+1}_k \xrightarrow{pr_{n+1}} \AA^1_k.\] 
There is a natural $\GG_m$-action on $\Spec(k[t]/(t^{n+1}))$ induced by $t\mapsto gt$ for $g\in \GG_m$. This induces an action on $\Lo_n(-)$ and $\Lo_n(f)$ is $\GG_m$-equivariant. Moreover, $f_n$ is homogeneous of degree $n$ if we use the standard $\GG_m$-action on $\AA^1_k$. Let us introduce the following generating series
\begin{eqnarray*} \lefteqn{Z_f^{eq}(T)\;  = \; \sum_{n\ge 1} \int_{\Lo_n(X)|_{X_0}} \phi^{eq}_{f_n} T^n } \\ & = & \sum_{n\ge 1} \LL^{-(n+1)d/2}\Bigl( \bigl[f_n^{-1}(0)\cap \Lo_n(X)|_{X_0} \rightarrow X_0\bigr] - \bigl[f_n^{-1}(1)\cap \Lo_n(X)|_{X_0} \rightarrow X_0\bigr]\Bigr)T^n 
\end{eqnarray*}
in $\hat{\KK}^{\muu}(\Var/X_0\times \mathbb{N}_{>0})$ with $\muu$ acting nontrivially only on $f_n^{-1}(1)\cap \Lo_n(X)|_{X_0}$. To compute $Z^{eq}_f(T)$ we use the stratification (\ref{stratification}) and the notation $\Lo_{n,1}(X):=\Lo_{n,\GG_m}(X)\cap f_n^{-1}(1)$. Observe that
\begin{eqnarray*} f_n^{-1}(z)\cap \Lo_n(X_0) & = & \begin{cases} \emptyset & \mbox{for }z\neq 0, \\ \Lo_n(X_0) & \mbox{for }z=0, \end{cases} \\ 
f_n^{-1}(z)\cap \Lo_{n,\GG_m}(X) & \cong & \begin{cases} \Lo_{n,1}(X)& \mbox{for } z\neq 0, \\ \emptyset & \mbox{for }z=0. \end{cases} 
\end{eqnarray*}
Notice that for any $1\le m <n$ the stratum $(\pi^n_m)^{-1}(\Lo_{m,\GG_m}(X))$ is isomorphic to the product $\AA^1_k\times_k f_n^{-1}(0)\cap (\pi^n_m)^{-1}(\Lo_{m,\GG_m}(X))$ by mapping an arc $\gamma$ in $X$ such that $f\circ \gamma=a_mt^m + \ldots + f_n(\gamma)t^n$ with $a_m\neq 0$ to the pair $(f_n(\gamma), \gamma\circ \vartheta_\gamma)$ with $\vartheta_\gamma$ being the automorphism of $k[t]/(t^{n+1})$ mapping $t$ to $t - \frac{f_n(\gamma)}{ma_m} t^{n-m+1}$. This isomorphism is actually $\GG_m$-equivariant if $g\in \GG_m$ acts on $(z,\gamma)\in \AA^1_k\times_k f_n^{-1}(0)\cap (\pi^n_m)^{-1}(\Lo_{m,\GG_m}(X))$ by means of $(g^nz,g\cdot \gamma)$. Hence, $[(\pi^n_m)^{-1}(\Lo_{m,\GG_m}(X))\longrightarrow X_0]$ is in the $\lambda$-ideal $\mathfrak{I}_n\subset \KK^{\GG_m,n}(\Var/\AA^1_{X_0})$ and $\int_{(\pi^n_m)^{-1}(\Lo_{m,\GG_m}(X))} \phi^{eq}_{f_n}=0$ follows. In particular, these strata will not contribute to $Z^{eq}_f(T)$, and we finally get
\begin{eqnarray*} \lefteqn{\LL^{d/2} Z^{eq}_f(\LL^{-d/2}T) \;=\; \sum_{n\ge 1} \Bigl( \bigl[ \Lo_n(X_0) \rightarrow X_0 \bigr] - \bigl[ \Lo_{n,1}(X) \rightarrow X_0 \bigr]\Bigr) \LL^{-nd} T^n } \\ 
& = &  J_{X_0}(\LL^{-d}T) - [X_0 \rightarrow X_0] - Z_f(T) \;=\; \frac{T[X_0\rightarrow X_0] - Z^{naive}_f(T)}{1-T} - Z_f(T),
\end{eqnarray*}
where we used equation (\arabic{equation}) and the Zeta function $Z_f(T)=\sum_{n\ge 1} \bigl[ \Lo_{n,1}(X)\rightarrow X_0\bigr] \LL^{-nd}T^n\in \hat{\KK}^{\muu}(\Var/X_0\times\mathbb{N}_{>0})$ introduced by Denef and Loeser \cite[Definition 3.2.1]{DenefLoeser2},\cite{DenefLoeser1}.  
Using motivic integration Denef and Loeser show that $Z^{naive}_f(T)$ and $Z_f(T)$ are Taylor series expansions around $T=0$ of rational functions in $\KK^{\muu}(\Var/X_0)[\LL^{-1/2}](T)$. Moreover, the rational functions are regular at $T=\infty$ and Denef and Loeser define the vanishing cycle sheaf (up to the factor $(-1)^d$) to be the value of $Z_f(T)$ at $T=\infty$ plus $[X_0 \xrightarrow{id} X_0]$ with trivial $\muu$-action. We will follow their definition but use a different factor. Notice that by the previous formula $Z^{eq}_f(T)$ is also a Taylor expansion around $T=0$ of a rational function without a pole at $T=\infty$.
\begin{Definition} If we also denote by $Z^{eq}_f(T)$ the rational function having Taylor series expansion around $T=0$ as above, the vanishing cycle sheaf is defined by 
\[ \phi_f=-Z^{eq}_f(\infty) \in \KK^{\muu}(\Var/X_0)[\LL^{-1/2}].\]
\end{Definition}
Our definition differs from that of Denef and Loeser by a factor $(-\LL^{1/2})^d$ which is a line element having Euler characteristic $1$. It coincides with that of Kontsevich and Soibelman \cite{KS1}. \\

To give an explicit expression for the rational function $Z^{eq}_f(T)$ we choose an embedded resolution of $X_0\subset X$, i.e.\ a smooth variety $Y$ together with a proper morphism $\pi:Y \rightarrow X$ such that $Y_0=(f\circ\pi)^{-1}(0)=\pi^{-1}(X_0)$ is a normal crossing divisor and $\pi: Y\setminus Y_0 \xrightarrow{\sim} X\setminus X_0$. Denote the irreducible components of $Y_0$ by $E_i$ with $i\in J$ and let $m_i>0$ be the multiplicity of $E_i$. Since $f\circ \pi$ is a section in $\mathcal{O}_Y(-\sum_{i\in J}m_iE_i)$, it induces a regular map to $\AA_k^1$ from the total space of $\mathcal{O}_Y(\sum_{i\in I} m_iE_i)$ for any $\emptyset \neq I \subset J$. The latter space restricted to $E_I^\circ$ is just $\otimes_{i\in I} N_{E_i|Y}^{\otimes m_i}|_{E_I^\circ}$. By composition with the tensor product we get a regular map $f_I:N_I:=\prod_{i\in I}( N_{E_i|Y}\setminus E_i) |_{E_I^\circ} \longrightarrow \AA_k^1$ which is obviously homogeneous of degree $m_i$ with respect to the $\GG_m$-action on the factor $(N_{E_i|Y}\setminus E_i)|_{E_I^\circ}$ and homogeneous of degree $m_I:=\sum_{i\in I}m_i$ with respect to the diagonal $\GG_m$-action. In particular, $f_I^{-1}(1)$ carries a natural $\muu$-action via $\mu_{m_I}$. By composing with $\pi:Y \rightarrow X$ the projection $N_I \rightarrow E_I^\circ$ along the fibers induces a $\muu$-equivariant map $f_I^{-1}(1) \rightarrow X_0$.\footnote{The diagonal action of $\GG_m$ on $N_I$ is canonical but not the only possible one. We can also let $\GG_m$ act with weight $w_i\in \mathbb{Z}$ on the fibers of $N_{E_i|Y}\setminus E_i$ for any $i\in I$ such that $w=\sum_{i\in I}w_im_i$ is positive. Then, $f_I$ is homogeneous of degree $w>0$ and $f_I^{-1}(1)$ carries a $\mu_w$-action. For different weights $(w_i)_{i\in I}$ we get different monodromy actions but one can show that the motives $[f_I^{-1}(1)\longrightarrow X_0]\in \KK^{\muu}(\Var/X_0)$ are always the same (see the proof of Theorem \ref{theorem1}).  A ``minimal'' monodromy action is obtained by choosing weights such that $\sum_{i\in I}w_im_i=\gcd(m_i\mid i\in I)$.}  Finally, we introduce positive integers $\nu_i$ by the formula $\pi^\ast K_X \otimes K_Y^{-1}=\mathcal{O}_Y(\sum_{i \in J}(1-\nu_i)E_i)$ with $K_X$ resp.\ $K_Y$ being the canonical divisors of $X$ resp.\ $Y$. By combining the explicit formulas for the rational functions associated to $Z^{naive}_f(T)$ and $Z_f(T)$ (see \cite[Theorem 3.3.1]{DenefLoeser2}, \cite[Corollary 3.3.2]{DenefLoeser2} and \cite[Lemma 5.3]{Looijenga1}) we immediately get the following theorem.

\begin{Theorem} \label{resolution}
Let $f:X\rightarrow \AA_k^1$ be as above and $\pi:Y \rightarrow X$ be an embedded resolution of $X_0$. In the notation just explained we have
\[ \LL^{d/2}Z^{eq}_f(\LL^{-d/2}T) = \sum_{\emptyset \neq \hat{I}\subset J\sqcup\{\star\} } a_{\hat{I}} \prod_{i\in \hat{I}} \frac{ \LL^{-\nu_i}T^{m_i}}{1-\LL^{-\nu_i}T^{m_i}} \]
with $\nu_\star=0$, $m_\star=1$, $a_{\{\star\}}=[X_0\rightarrow X_0]$, $a_{\hat{I}}=-[N_I\rightarrow X_0]$ for $\hat{I}=I\sqcup \{\star\}$ and $a_{\hat{I}}=-[N_I\rightarrow X_0] - [f_I^{-1}(1) \rightarrow X_0]$ for $\hat{I}=I\subset J$ with $\muu$ acting nontrivially only on $f_I^{-1}(1)$ via $\mu_{m_I}$. In particular,
\[ \phi_f \;=\; -\LL^{-\frac{d}{2}}\sum_{\emptyset \neq \hat{I}\subset J\sqcup\{\star\}} (-1)^{|\hat{I}|} a_{\hat{I}} \;= \; \LL^{-\frac{d}{2}}\Big( [X_0 \rightarrow X_0] + \sum_{\emptyset \neq I\subset J} (-1)^{|I|}[f_I^{-1}(1) \rightarrow X_0] \Big). \]
\end{Theorem}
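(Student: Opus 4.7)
The strategy is to substitute the classical Denef--Loeser rational expressions for $Z^{naive}_f(T)$ and $Z_f(T)$ (the cited results from \cite{DenefLoeser2} and \cite{Looijenga1}) into the identity
\[ \LL^{d/2}Z^{eq}_f(\LL^{-d/2}T) \;=\; \frac{T[X_0\to X_0] - Z^{naive}_f(T)}{1-T} \;-\; Z_f(T) \]
derived just above the theorem statement. Both Denef--Loeser expressions are sums over the nonempty $I\subset J$ of the shape $c_I\prod_{i\in I}\frac{\LL^{-\nu_i}T^{m_i}}{1-\LL^{-\nu_i}T^{m_i}}$, with coefficient $c_I=[N_I\to X_0]$ for $Z^{naive}_f$ and $c_I=[f_I^{-1}(1)\to X_0]$ (equipped with its $\mu_{m_I}$-action) for $Z_f$. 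In particular the right-hand side of the identity is already a rational function in $T$, which serves simultaneously as the definition of the rational $Z^{eq}_f(T)$ whose existence was asserted just before.

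The combinatorial rearrangement is straightforward. Introduce the auxiliary index $\star$ with $m_\star=1$ and $\nu_\star=0$: then $\frac{T}{1-T}$ matches $\frac{\LL^{-\nu_\star}T^{m_\star}}{1-\LL^{-\nu_\star}T^{m_\star}}$, and from $\frac{1}{1-T}=1+\frac{T}{1-T}$ the piece $\frac{-Z^{naive}_f(T)}{1-T}$ splits, for each nonempty $I\subset J$, into a contribution with coefficient $-[N_I\to X_0]$ to the $\hat{I}=I$ summand and a contribution with the same coefficient to the $\hat{I}=I\sqcup\{\star\}$ summand. The term $\frac{T[X_0\to X_0]}{1-T}$ gives the lone $\hat{I}=\{\star\}$ summand with coefficient $a_{\{\star\}}=[X_0\to X_0]$, and $-Z_f(T)$ adds the remaining $-[f_I^{-1}(1)\to X_0]$ to the $\hat{I}=I\subset J$ coefficient. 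Collecting these produces exactly the asserted formula for $\LL^{d/2}Z^{eq}_f(\LL^{-d/2}T)$ and in particular confirms that $Z^{eq}_f(T)$ is regular at $T=\infty$.

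For the formula for $\phi_f=-Z^{eq}_f(\infty)$ we then evaluate at $T=\infty$: each factor $\frac{\LL^{-\nu_i}T^{m_i}}{1-\LL^{-\nu_i}T^{m_i}}$ (including $i=\star$) tends to $-1$, so
\[ \LL^{d/2}Z^{eq}_f(\infty)\;=\;\sum_{\emptyset\neq\hat{I}\subset J\sqcup\{\star\}}(-1)^{|\hat{I}|}\,a_{\hat{I}}. \]
The $-[N_I\to X_0]$ parts of $a_I$ and $a_{I\sqcup\{\star\}}$ now enter with opposite signs $(-1)^{|I|}$ and $(-1)^{|I|+1}$ and cancel, leaving $-[X_0\to X_0]$ (from $\hat{I}=\{\star\}$) together with $-(-1)^{|I|}[f_I^{-1}(1)\to X_0]$; multiplying by $-\LL^{-d/2}$ yields the displayed expression for $\phi_f$. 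There is no serious obstacle in this proof; the only point requiring genuine care is the consistent tracking of the $\mu_{m_I}$-monodromy on $f_I^{-1}(1)$, which is exactly the equivariant bookkeeping set up in the previous section, together with verifying that the Denef--Loeser formulas, written in terms of the torus bundles $N_I$ rather than the strata $E_I^\circ$ themselves, match the normalization used here.
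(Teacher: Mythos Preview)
Your proposal is correct and is precisely the argument the paper has in mind: the paper does not give a detailed proof but simply says the theorem follows ``by combining the explicit formulas for the rational functions associated to $Z^{naive}_f(T)$ and $Z_f(T)$'' with the identity $\LL^{d/2}Z^{eq}_f(\LL^{-d/2}T)=\frac{T[X_0\to X_0]-Z^{naive}_f(T)}{1-T}-Z_f(T)$, and you have spelled out exactly that combination, including the $\frac{1}{1-T}=1+\frac{T}{1-T}$ rearrangement and the cancellation of the $[N_I\to X_0]$ terms upon evaluation at $T=\infty$.
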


The following proposition is a direct consequence of the previous theorem.
\begin{Proposition} \label{vanishing} \qquad
 \begin{enumerate}
  \item The motive $\phi_f$ is supported on $\Crit(f)\subset X_0$, in other words contained in $\KK^{\muu}(\Var/\Crit(f))[\LL^{-1/2}] \hookrightarrow \KK^{\muu}(\Var/X_0)[\LL^{-1/2}]$.\footnote{Notice: $\KK^{\muu}(\Var/\emptyset)=\{0\}$}
  \item If $f\equiv 0$, then $\phi_f=\LL^{-\frac{\dim X}{2}}[X \xrightarrow{id} X]$.
  \item Let $\pi:Y\rightarrow X$ be a smooth morphism of relative dimension $r$. Then $\phi_{f\circ\pi}=\LL^{-\frac{r}{2}}\pi^\ast \phi_f$.
 \end{enumerate}
\end{Proposition}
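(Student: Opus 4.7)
\emph{Plan.} I attack the three statements in the order (2), (3), (1). Part~(2) has to be computed directly from the generating series, since Theorem~\ref{resolution} does not apply when $X_0=X$ is not a divisor. Part~(3) follows from Theorem~\ref{resolution} by base-changing an embedded resolution along the smooth morphism $\pi$, and part~(1) is then deduced from (3) applied to the open immersion $X\setminus\Crit(f)\hookrightarrow X$.

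\emph{Part (2).} For $f\equiv 0$ we have $X_0=X$ and every arc $\gamma$ satisfies $f\circ\gamma=0$, so $\Lo_{n,\GG_m}(X)$ and $\Lo_{n,1}(X)$ are empty and $Z_f^{naive}(T)=Z_f(T)=0$. Equation~(2) of the text then specializes to $J_{X}(\LL^{-d}T)=[X\to X]/(1-T)$, and feeding this into $\LL^{d/2}Z^{eq}_f(\LL^{-d/2}T)=J_{X_0}(\LL^{-d}T)-[X_0\to X_0]-Z_f(T)$ yields the rational function $Z^{eq}_f(S)=S[X\to X]/(1-\LL^{d/2}S)$, whose value at $S=\infty$ is $-\LL^{-d/2}[X\to X]$. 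Hence $\phi_f=-Z^{eq}_f(\infty)=\LL^{-d/2}[X\xrightarrow{\mathrm{id}}X]$.

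\emph{Part (3).} Assume $f\not\equiv 0$ (the case $f\equiv 0$ is covered on both sides by part~(2)). Pick an embedded resolution $\pi_X:X_{\mathrm{res}}\to X$ of $X_0$ with exceptional components $E_i$ of multiplicities $m_i$ and discrepancies $\nu_i$, and form $Y_{\mathrm{res}}:=Y\times_X X_{\mathrm{res}}$ with projections $\pi_Y:Y_{\mathrm{res}}\to Y$ (proper) and $q:Y_{\mathrm{res}}\to X_{\mathrm{res}}$ (smooth of relative dimension $r$). Smoothness of $q$ makes $Y_{\mathrm{res}}$ smooth and $\pi_Y^{-1}(Y_0)=\bigcup_iq^{-1}(E_i)$ a simple normal crossings divisor with the same multiplicities $m_i$; stability of the relative cotangent sheaf under base change gives $K_{Y_{\mathrm{res}}/X_{\mathrm{res}}}=\pi_Y^\ast K_{Y/X}$, hence $\pi_Y^\ast K_Y-K_{Y_{\mathrm{res}}}=q^\ast(\pi_X^\ast K_X-K_{X_{\mathrm{res}}})$, so the discrepancies also agree, $\nu_i^Y=\nu_i^X$. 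Normal bundles pull back, $N_{q^{-1}(E_i)|Y_{\mathrm{res}}}=q^\ast N_{E_i|X_{\mathrm{res}}}$, so the auxiliary spaces $N_I$ and maps $f_I$ attached to $Y_{\mathrm{res}}$ are base changes via $q$ of those attached to $X_{\mathrm{res}}$, carrying the same $\mu_{m_I}$-monodromy; in particular the cartesian identification $(f_I^{-1}(1))^{Y_{\mathrm{res}}}=(f_I^{-1}(1))^{X_{\mathrm{res}}}\times_{X_0} Y_0$ yields $[f_I^{-1}(1)\to Y_0]=\pi^\ast[f_I^{-1}(1)\to X_0]$ in $\KK^{\muu}(\Var/Y_0)$, and likewise $[Y_0\to Y_0]=\pi^\ast[X_0\to X_0]$. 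Plugging into Theorem~\ref{resolution} with $\dim Y=d+r$ gives the prefactor $\LL^{-(d+r)/2}=\LL^{-r/2}\LL^{-d/2}$, yielding $\phi_{f\circ\pi}=\LL^{-r/2}\pi^\ast\phi_f$.

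\emph{Part (1).} Set $U:=X\setminus\Crit(f)$ and $j:U\hookrightarrow X$; then $j$ is smooth of relative dimension zero, so part~(3) gives $j^\ast\phi_f=\phi_{f|_U}$, and it suffices to prove $\phi_{f|_U}=0$. On $U$ the map $f|_U$ is smooth, so $U_0$ is a smooth divisor and $\mathrm{id}_U$ is an embedded resolution whose components $E_1,\ldots,E_s$ are pairwise disjoint with $m_i=\nu_i=1$. Thus $E_I=\emptyset$ for $|I|>1$, and for each $I=\{i\}$ the function $f_i$ is a nowhere-vanishing $\GG_m$-equivariant linear function on $N_{E_i|U}\setminus E_i$, so $f_i^{-1}(1)\to E_i$ is a section with trivial $\muu$-action, giving $[f_i^{-1}(1)\to U_0]=[E_i\to U_0]$. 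Theorem~\ref{resolution} then yields $\phi_{f|_U}=\LL^{-d/2}([U_0\to U_0]-\sum_i[E_i\to U_0])=0$, which forces $\phi_f$ to be supported on $\Crit(f)$. The main obstacle will be the base-change bookkeeping in part~(3), namely verifying that multiplicities, discrepancies, normal bundles and the auxiliary maps $f_I$ are all stable under the smooth morphism $q$; once this is in hand, the factor $\LL^{-r/2}$ simply records the shift in $\dim Y$.
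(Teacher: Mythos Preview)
Your proof is correct and follows the same route the paper has in mind: deduce (1) and (3) from the explicit formula of Theorem~\ref{resolution}, as the paper's one-line ``direct consequence of the previous theorem'' indicates. Your base-change bookkeeping for part~(3) is sound---properness, smoothness, the SNC property, multiplicities, discrepancies, and the identifications $N_I^Y=q^\ast N_I^X$ and $(f_I^{-1}(1))^Y=(f_I^{-1}(1))^X\times_{X_0}Y_0$ all behave under the smooth base change $q$ as you claim---and the reduction of (1) to $\phi_{f|_U}=0$ via the open immersion $j$ is clean.

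The one point worth flagging is your treatment of part~(2). You are right that Theorem~\ref{resolution} does not literally apply when $f\equiv 0$, since then $X_0=X$ is not a divisor and no embedded resolution in the required sense exists; so the paper's blanket ``direct consequence'' is a slight overstatement here. Your direct computation from the generating series---using $Z_f^{naive}=Z_f=0$, the identity $J_X(\LL^{-d}T)=[X\to X]/(1-T)$, and then evaluating the resulting rational function at infinity---is the correct way to handle this case and fills a small gap the paper leaves implicit. One minor remark on part~(3): the pullbacks $q^{-1}(E_i)$ need not be irreducible, but since they are smooth their connected components are disjoint and share the same $(m_i,\nu_i)$, so the sum over subsets in Theorem~\ref{resolution} is unaffected; you may want to say a word about this to be fully precise.
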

 
The last property is very useful for defining motivic vanishing cycles on Artin stacks. Indeed, let $\XX$ be a smooth Artin stack locally of finite type over $k$ and let $f:\XX \rightarrow \AA_k^1$ be a regular map and $\XX_0$ its zero locus. Assume that $\XX$ is locally a quotient stack, i.e.\ every closed point has an open neighborhood $\XX'$ of finite type which is isomorphic to a quotient stack $X/\Gl_k(n)$ for some smooth connected $k$-variety $X$ and some $n >0$.\footnote{By a result of Kresch \cite[Proposition 3.5.9]{Kresch} an Artin stack $\XX$ locally of finite type admits a locally finite stratification by quotient stacks if and only if it belongs to $\Sta/k$. Conjecturally, $\XX$ is locally of the form $X/\Gl(n)$ with smooth $X$ of finite type if and only if $\XX$ is smooth and  $\XX\in \Sta/k$.} If $\phi_f$ were defined such that Prop.\ \ref{vanishing} (3) holds for any representable smooth morphism between stacks of the form mentioned above, we would get for any local atlas $\pi:X \rightarrow X/\Gl_k(n)\cong \XX'\subset \XX$
\[ \pi_\ast \phi_{f\circ\pi}= \LL^{-\frac{n^2}{2}}\pi_\ast\pi^\ast \phi_f=\LL^{-\frac{n^2}{2}}\phi_f\times_\XX[X \xrightarrow{\pi} \XX]=\LL^{-\frac{n^2}{2}}[\Gl(n)] \,\phi_f|_{\XX'_0}\]
and, thus, $\phi_f|_{\XX'_0}=\LL^{\frac{n^2}{2}}[\Gl_k(n)]^{-1}\pi_\ast\phi_{f\circ\pi}$ in $\KK^{\muu}(\Var/\XX'_0)[[\Gl_k(n)]^{-1}, n \in \mathbb{N}]=\KK^{\muu}(\Sta/\XX'_0)$. We will take this as our (local) definition for $\phi_f$ and by Proposition \ref{vanishing} (3) this definition is independent of the choice of a local description by a quotient stack. In particular, the locally defined motives $\phi_f|_{\XX'_0}$ glue and we get a well-defined element $\phi_f\in \KK^{\muu}(\Sta/\XX_0)$. \\

As a special case we get motivic vanishing cycles for smooth Deligne--Mumford stacks of the form $X/G$ for some finite group $G$ acting on a smooth variety $X$ as $X/G\cong Y/\Gl_k(n)$ with $Y=X\times_G \Gl_k(n)$ for some embedding $G\hookrightarrow \Gl_k(n)$.  Let us firstly mention that any \'{e}tale locally trivial $G$-principal bundle on $\Spec (k[t]/(t^{p+1}))$ is trivial. Hence, $\Lo_p(X/G)=\Lo_p(X)/G$.
By left translation on $\Gl_k(n)$ we have $\Lo_p(X\times_k \Gl_k(n))=\Lo_p(X)\times_k \Gl_k(n) \times_k \gl_k(n)^p$ with $G$ acting nontrivially only on the factor $\Lo_p(X)\times_k \Gl_k(n)$. As $X\times_k \Gl_k(n) \longrightarrow Y$ is an \'{e}tale $G$-principal bundle, $\Lo_p(Y)=\Lo_p(X\times_k \Gl_k(n))/G=\Lo_p(X)\times_G \Gl_k(n) \times_k \gl_k(n)^p$. Thus, $\Lo_p(Y) \longrightarrow \Lo_p(X)/G$ is a vector bundle of rank $n^2p$ over a $\Gl_k(n)$-principal bundle on $\Lo_p(X)/G$. By Proposition \ref{basicprop} (5) we finally get for any regular $f:X/G \longrightarrow \AA^1_k$ with associated $\tilde{f}:Y\longrightarrow \AA^1_k$ and $f_p:\Lo_p(X)/G \longrightarrow \AA^1_k$
\[ Z^{eq}_{\tilde{f}}(T)=\frac{[\Gl_k(n)]}{\LL^{n^2/2}} \sum_{p\ge 1} \int_{\Lo_p(X)/G|_{X_0/G}} \phi^{eq}_{f_p} (\LL^{n^2/2}T)^p =: \frac{[\Gl_k(n)]}{\LL^{n^2/2}} \, Z^{eq}_f(\LL^{n^2/2}T).\]
In particular, the ``stacky'' generating series $Z^{eq}_f(T)$ is a Taylor series expansion of a rational function with regular value $-\phi_f$ at $T=\infty$. But the reader should be warned. The rational function is not obtained by applying Theorem \ref{resolution} to a $G$-equivariant resolution of $X$ and by replacing any space with $G$-action with its quotient stack.\\

We will apply this to the following situation. Start with a regular function $f:X\rightarrow \AA^1_k$ on a smooth variety of dimension $d$. Form the smooth Deligne--Mumford stack $\Symm^nX =  X^n/S_n$ and the regular function $\SSymm_+^n(f): \Symm^n X \xrightarrow{\Symm^n(f)} \Symm^n \AA^1 \xrightarrow{\;+\;} \AA^1_k$. Notice that $\Symm^n X_0$ is naturally a substack of $(\Symm^n X)_0$.

\begin{Theorem}  \label{symvanishing}
If we consider $\phi_f\in \KK^{\muu}(\Var/X_0)[\LL^{-1/2}]$ and $\phi_{\SSymm_+^n(f)}|_{\Symm^n X_0}\in \KK^{\muu}(\Sta/\Symm^n X_0)$ as elements of the $\lambda$-ring $\hat{\KK}^{\muu}(\Sta/\Symm X_0)$ via the obvious maps, we get for any $n\ge 0$ the equation
\[ \LL^{nd/2} \phi_{\SSymm_+^n(f)}|_{\Symm^n X_0} = \sigma^n ( \LL^{d/2} \phi_f). \]
\end{Theorem}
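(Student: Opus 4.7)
The plan is to deduce Theorem~\ref{symvanishing} from its equivariant counterpart in Prop.~\ref{basicprop}~(4), using the Denef--Loeser representation $\phi_f=-Z^{eq}_f(\infty)$ developed just before Theorem~\ref{resolution}. Recall that
\[Z^{eq}_f(T)=\sum_{p\ge 1}\int_{\Lo_p(X)|_{X_0}}\phi^{eq}_{f_p}\,T^p\]
is assembled from integrated $\GG_m$-equivariant vanishing cycles of the truncation maps $f_p:\Lo_p(X)\to\AA^1_k$, to which the equivariant theorem applies directly. So the strategy is: (i) show that all the geometric data entering $Z^{eq}_f$ commutes with $\Symm^n$; (ii) apply Prop.~\ref{basicprop}~(4) term by term; (iii) pass to $T=\infty$.

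For step (i) the point is that $S_n$ is finite and arc spaces are local, so there are canonical identifications of smooth Deligne--Mumford stacks
\[\Lo_p(\Symm^n X)\cong\Symm^n\Lo_p(X),\qquad \Lo_p(\Symm^n X)|_{\Symm^n X_0}\cong\Symm^n(\Lo_p(X)|_{X_0}),\]
under which $(\SSymm^n_+(f))_p$ corresponds to $\SSymm^n_+(f_p)$ (both extract the coefficient of $t^p$ after componentwise composition with $f$ and summation). The stratification~(\ref{stratification}) is $S_n$-equivariant and refines to a stratification of $\Symm^n(\Lo_p(X)|_{X_0})$, so the decomposition that produces the naive series $Z^{naive}_f$ and the Denef--Loeser series $Z_f$ is preserved.

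For step (ii) each $f_p$ is $\GG_m$-equivariant of weight $p$, and $\XX:=\Lo_p(X)$ is smooth of dimension $d(p+1)$. Applying Prop.~\ref{basicprop}~(4) to $\XX^s=\Lo_p(X)|_{X_0}$ yields, for every $p\ge 1$,
\[\LL^{nd(p+1)/2}\int_{\Symm^n(\Lo_p(X)|_{X_0})}\phi^{eq}_{\SSymm^n_+(f_p)}=\sigma^n\Bigl(\LL^{d(p+1)/2}\int_{\Lo_p(X)|_{X_0}}\phi^{eq}_{f_p}\Bigr).\]
The same reasoning applied to $\Lo_{p,\GG_m}(X)$ and to $\Lo_{p,1}(X)$ yields analogous identities for $Z^{naive}_f$ and $Z_f$. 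Combined with the formula $\LL^{d/2}Z^{eq}_f(\LL^{-d/2}T)=\tfrac{T[X_0]-Z^{naive}_f(T)}{1-T}-Z_f(T)$, this promotes the termwise identity to a compatibility between the full rescaled series for $f$ and for $\SSymm^n_+(f)$.

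For step (iii) I would use the explicit rational-function shape from an embedded resolution $\pi:Y\to X$ as in Theorem~\ref{resolution}. Each building block $a_{\hat I}\prod_{i\in\hat I}\tfrac{\LL^{-\nu_i}T^{m_i}}{1-\LL^{-\nu_i}T^{m_i}}$ is a product of ``line-element''-type factors in $T$, $\LL^{-1/2}$ and $-\LL^{1/2}$, and the $\sigma^n$ of such a building block can be evaluated in closed form using the line-element identity $\sigma^n((-\LL^{1/2})^m a)=(-\LL^{1/2})^{mn}\sigma^n(a)$ together with Thom--Sebastiani from Prop.~\ref{basicprop}~(3); the resulting product matches the corresponding block on the $\SSymm^n_+(f)$-side coming from the resolution $\Symm^n Y\to\Symm^n X$ of the stratum containing $\Symm^n X_0$. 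Evaluating at $T=\infty$ yields the desired equality $\LL^{nd/2}\phi_{\SSymm^n_+(f)}|_{\Symm^n X_0}=\sigma^n(\LL^{d/2}\phi_f)$.

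The main obstacle is exactly step (iii): one cannot simply apply $\sigma^n$ coefficient by coefficient to $Z^{eq}_f(T)$ since $\sigma^n$ is not additive, and the value at $T=\infty$ is a nonlinear limit of a rational function. The escape route is the finite resolution formula of Theorem~\ref{resolution}, which reduces $\sigma^n(\LL^{d/2}\phi_f)$ to a finite $\lambda$-combinatorial expression. Matching it against the corresponding finite expression obtained from the $S_n$-equivariant product resolution $Y^n\to X^n$, and checking that the induced $\muu$-monodromies agree (they are controlled by the diagonal copy of $\mu_{m_I}$ inside $\prod_{i\in I}\mu_{m_i}$), is the technical heart of the proof.
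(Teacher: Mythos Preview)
Your steps (i) and (ii) match the paper's argument exactly: the identifications $\Lo_p(\Symm^n X)\cong\Symm^n\Lo_p(X)$, $(\SSymm^n_+(f))_p=\SSymm^n_+(f_p)$, and the termwise application of Prop.~\ref{basicprop}~(4) are precisely what the paper does. Where you diverge is step (iii), and there is a real gap in your plan.

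First, your worry that ``one cannot simply apply $\sigma^n$ coefficient by coefficient'' is misplaced. The paper works in the \emph{relative} $\lambda$-ring $\hat{\KK}^{\muu}(\Sta/\Symm X_0\times\mathbb{N}_{>0})$ over the base $\Base=\mathbb{N}_{>0}$ (cf.\ Remark~\ref{relative}). In that ring, identified with $TR[[T]]$ for $R=\hat{\KK}^{\muu}(\Sta/\Symm X_0)$, the product is coefficientwise and hence $\sigma^n$ \emph{is} applied to each coefficient separately. So your termwise identity from (ii) is literally the equation
\[
\LL^{nd/2}\,Z^{eq}_{\SSymm^n_+(f)}(\LL^{-nd/2}T)\big|_{\Symm^n X_0}\;=\;\sigma^n\bigl(\LL^{d/2}Z^{eq}_f(\LL^{-d/2}T)\bigr)
\]
in that $\lambda$-ring, with no additivity obstacle.

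Second, your proposed route to $T=\infty$ via the $S_n$-equivariant product resolution $Y^n\to X^n$ does not work as stated. The paper explicitly warns (just after the definition of $\phi_f$ for quotient stacks) that for $X/G$ the rational function $Z^{eq}_f(T)$ is \emph{not} obtained by applying Theorem~\ref{resolution} to a $G$-equivariant resolution and passing to quotient stacks. So you cannot read off $\phi_{\SSymm^n_+(f)}$ from the blocks of $Y^n/S_n$ and match them against $\sigma^n$ of the blocks for $Y$; and even on the $\sigma^n(\LL^{d/2}\phi_f)$ side, expanding $\sigma^n$ of the finite sum $\sum_{\hat I}(-1)^{|\hat I|}a_{\hat I}$ into cross-terms and controlling the monodromies would reproduce, not bypass, the difficulty.

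The paper's actual step (iii) is Lemma~\ref{conjecture}: one shows, by partial fractions, that the $R$-submodule $TR[[T]]_{rat}\subset TR[[T]]$ of Taylor expansions of the rational functions arising in Theorem~\ref{resolution} is a $\lambda$-subring (for the coefficientwise $\sigma^n$), and that $g_0(T)\mapsto -g(\infty)$ is a $\lambda$-ring homomorphism $TR[[T]]_{rat}\to R$. Applying this homomorphism to the displayed series identity gives the theorem immediately. This is the missing idea in your proposal.
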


\begin{proof}
Notice that $\Lo_p(\Symm^n X)= \Lo_p(X^n/S_n)=\Lo_p(X)^n/S_n =\Symm^n \Lo_p(X)$, $(\pi^p)^{-1}(\Symm^n X_0)=\Symm^n ((\pi^p)^{-1}(X_0))$ and $(\SSymm_+^n(f))_p=\SSymm_+^n(f_p)$. By Proposition \ref{basicprop} (4) with $\MM:=\Symm X_0$ we get
\begin{eqnarray*} \lefteqn{\LL^{nd/2} Z^{eq}_{\SSymm_+^n(f)}(\LL^{-nd/2}T)|_{\Symm^n X_0} } \\ 
& = & \sum_{p\ge 1} \LL^{-ndp}\cdot \LL^{\dim \Symm^n\Lo_p(X)/2} \int_{\Symm^n ((\pi^p)^{-1}(X_0))} \phi^{eq}_{\SSymm_+^n(f_p)} T^p \\
& = & \sum_{p\ge 1} \sigma^n\Bigl( \LL^{-dp} \cdot\LL^{\dim \Lo_p(X)/2} \int_{(\pi^p)^{-1}(X_0)} \phi^{eq}_{f_p} \Bigr) T^p \\
& = & \sigma^n\bigl( \LL^{d/2} Z^{eq}_f (\LL^{-d/2}T) \bigr),
\end{eqnarray*}
where we used the $\lambda$-ring structure on $\hat{\KK}^{\muu}(\Sta/\Symm X_0\times \mathbb{N}_{>0})$ induced by the obvious structure of $\Symm X_0\times \mathbb{N}_{>0}$ as a monoid in the category of stacks over $\mathbb{N}_{>0}$ (cf.\ Remark \ref{relative}). If we identify this $\lambda$-ring with $TR[[T]]$ for $R:=\hat{\KK}^{\muu}(\Sta/\Symm X_0)$, the multiplication has to be done coefficient wise and, similarly, $\sigma^n$ has to be applied to every single coefficient (without changing $T$) for $n\ge 0$. We can also form the $\lambda$-ring $R[[T,T^{-1}]]\cong \hat{\KK}^{\muu}(\Sta/\Symm X_0 \times \mathbb{Z})$ with obvious $\lambda$-ring homomorphisms to $TR[[T]]$ resp.\ to $R$ given by restriction to $\mathbb{N}_{>0}$ resp.\  to $\{0\}$. Denote by $R\langle T \rangle$ the $R$-module of rational functions 
\[ g(T)=\sum_{\Lambda} g_{\Lambda}\prod_{i\in \Lambda}\frac{\LL^{-a_i}T^{b_i}}{1- \LL^{-a_i}T^{b_i}}, \] where the sum is over a finite collection of index sets $\Lambda$, $g_\Lambda \in R$ and $a_i,b_i\in \mathbb{N}$ with $b_i>0$ for all $i\in \Lambda$. Denote by $g_0(T)\in TR[[T]]$ resp. $g_\infty(T)\in R[[T^{-1}]]$ the Taylor series expansions of $g(T)\in R\langle T \rangle$ in $T=0$ resp.\ $T=\infty$ and define $\tau:R\langle T \rangle \ni g(T) \longmapsto g_0(T)-g_\infty(T) \in R[[T,T^{-1}]]$. Obviously, $\tau$ and its composition $\tau_0:R\langle T\rangle \ni g(T) \longmapsto g_0(T)\in TR[[T]]$ with $R[[T,T^{-1}]]\longrightarrow TR[[T]]$ are injective. Let $R[[T,T^{-1}]]_{rat}$ and $TR[[T]]_{rat}$ denote the images of $\tau$ and $\tau_0$. Thus, $R\langle T\rangle \cong R[[T,T^{-1}]]_{rat}\cong TR[[T]]_{rat}$. \\   
By Theorem \ref{resolution}, $\LL^{d/2}Z^{eq}_f(\LL^{-d/2}T)=g_0(T)$ for some $g(T)\in R\langle T \rangle$. The theorem is now a direct consequence of the following lemma, the definition of the vanishing cycle sheaf and the calculations above.
\end{proof}

\begin{Lemma} \label{conjecture}
The $R$-submodules $TR[[T]]_{rat}$ and $R[[T,T^{- 1}]]_{rat}$ of $TR[[T]]$ resp.\ $R[[T,T^{-1}]]$ are $\lambda$-subrings. In particular, the map $TR[[T]]_{rat}\ni g_0(T) \longmapsto -g_\infty(0)=-g(\infty) \in R$ is a $\lambda$-ring homomorphism.
\end{Lemma}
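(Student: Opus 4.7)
\emph{Plan:} I would reduce both parts of the lemma to direct computations on the basic generators $h_{a,b} := \frac{\LL^{-a}T^b}{1-\LL^{-a}T^b}$, since $R\langle T\rangle$ is spanned as an $R$-module by ordinary products $\prod_{i \in \Lambda} h_{a_i,b_i}$. The key observation is that the $\lambda$-ring structure on $TR[[T]]$ and on $R[[T,T^{-1}]]$ is fibrewise over the $T$-exponent grading (coming from the monoid structure of $\Symm X_0\times\mathbb{N}_{>0}$ over $\mathbb{N}_{>0}$), so the product and $\sigma^n$ both act coefficient-by-coefficient in $T$. Using the identity $\sigma^n(\LL^m) = \LL^{mn}$ (a consequence of the end of Section 4's relation $\sigma^n(a(-\LL^{1/2})^m) = \sigma^n(a)(-\LL^{1/2})^{mn}$), direct calculation yields
\[
\sigma^n(h_{a,b}) = \sum_{k\geq 1}\LL^{-akn}T^{bk} = h_{an,b},\qquad h_{a,b}\cdot h_{c,d} = \sum_{r\geq 1}\LL^{-er}T^{\ell r} = h_{e,\ell},
\]
where $\ell = \lcm(b,d)$ and $e = a\ell/b + c\ell/d$; the second identity comes from noting that the coefficient-wise product is supported exactly on common multiples of $b$ and $d$. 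Both results lie in $R\langle T\rangle$.

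To extend closure under the product and under $\sigma^n$ from basic generators to all of $R\langle T\rangle$, I would use $R$-bilinearity of the product together with the universal identities $\sigma^n(x+y) = \sum_{i+j=n}\sigma^i(x)\sigma^j(y)$ and $\sigma^n(xy) = P^n(\sigma^\cdot(x),\sigma^\cdot(y))$. The only nontrivial step is showing that the coefficient-wise product of two products of basic generators $\prod_{i\in\Lambda_1} h_{a_i,b_i}$ and $\prod_{j\in\Lambda_2} h_{c_j,d_j}$ still lies in $R\langle T\rangle$: this product is a generating function for the lattice points of the codimension-one sublattice $\{(k,l) \in \mathbb{Z}_{\geq 1}^{\Lambda_1\sqcup\Lambda_2} : \sum_i b_ik_i = \sum_j d_jl_j\}$, weighted by $\LL$-powers. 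Decomposing the sublattice into residue classes modulo the relevant $\lcm$'s produces a finite sum, each summand being (after applying $\frac{1}{1-X} = 1 + \frac{X}{1-X}$) an $R$-scalar times a product of basic generators. An induction on the total number $|\Lambda_1|+|\Lambda_2|$ of basic factors, with the basic case handled by the explicit formula above, completes the closure argument.

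For the $\lambda$-homomorphism property of $\varphi: g_0(T) \mapsto -g(\infty)$, the Laurent expansion of $h_{a,b}$ at $T=\infty$ is $h_{a,b}(T) = -\sum_{k\geq 0}\LL^{ak}T^{-bk}$, so $h_{a,b}(\infty) = -1$ and $\varphi(h_{a,b}) = 1$. Additivity and $R$-linearity of $\varphi$ are immediate. Multiplicativity on a pair of basic generators reads $\varphi(h_{a,b}\cdot h_{c,d}) = \varphi(h_{e,\ell}) = 1 = \varphi(h_{a,b})\varphi(h_{c,d})$, and compatibility with $\sigma^n$ on a single basic generator reads $\varphi(\sigma^n(h_{a,b})) = \varphi(h_{an,b}) = 1 = \sigma^n(1) = \sigma^n(\varphi(h_{a,b}))$. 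These identities propagate to all of $R\langle T\rangle$ by induction on the number of basic factors: writing a product as $xy$, the relation $\varphi(\sigma^n(xy)) = \sigma^n(\varphi(xy))$ follows from
\[
\varphi(P^n(\sigma^\cdot(x),\sigma^\cdot(y))) = P^n(\sigma^\cdot(\varphi(x)),\sigma^\cdot(\varphi(y))) = \sigma^n(\varphi(x)\varphi(y)) = \sigma^n(\varphi(xy)),
\]
combining the inductive hypothesis for $x,y$ with the already established ring-homomorphism property of $\varphi$.

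The chief obstacle is the polyhedral/residue-class decomposition in the second paragraph: expressing the coefficient-wise product of two arbitrary products of basic generators as an explicit $R$-linear combination of elements of $R\langle T\rangle$ requires careful combinatorial bookkeeping on a hyperplane in a cone, but the procedure is tractable and bottoms out at the explicit $\lcm$ computation for pairs of basic generators.
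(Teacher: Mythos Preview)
Your approach has a genuine gap in the treatment of $\sigma^n$. You invoke the identity $\sigma^n(xy)=P^n(\sigma^\cdot(x),\sigma^\cdot(y))$ as if it were universal, but this is precisely the axiom characterising \emph{special} $\lambda$-rings, and the paper states explicitly (Example~\ref{lambdarings}(4)) that the motivic $\lambda$-ring $R$ is \emph{not} special. Even setting that aside, the identity --- when it does hold --- refers to the $\lambda$-ring product, which in $TR[[T]]$ is the \emph{coefficient-wise} product. But the $R$-module generators of $R\langle T\rangle$ are \emph{ordinary} rational-function products $\prod_i h_{a_i,b_i}$, and these cannot in general be reduced to coefficient-wise operations on single basic generators: for instance $h_{0,1}^{2}_{\mathrm{ord}}=T^2/(1-T)^2$ has $T^k$-coefficient $k-1$, which after applying the Euler characteristic cannot equal any finite $R$-combination $\sum_\alpha g_\alpha h_{A_\alpha,B_\alpha}$ (whose Euler characteristics are eventually periodic in $k$). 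So your induction ``on the number of basic factors'' never gets off the ground for the $\sigma^n$-closure; the same problem recurs in your argument for $\varphi(\sigma^n(xy))=\sigma^n(\varphi(xy))$, where you again apply $P^n$ inside the non-special ring $R$.

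The paper repairs exactly this point via a partial-fraction step: using that $1-\LL^a$ is invertible in $R$, every product $\prod_i h_{a_i,b_i}$ is rewritten as an $R$-linear combination of terms $g\sum_{m} f(m)\LL^{-am}T^{bm+r}$ with $f$ an \emph{integer-valued polynomial}. The crucial gain is that each individual Taylor coefficient is now of the form $g\cdot f(m)\cdot\LL^{-am}$, i.e.\ (element of $R$)$\times$(integer)$\times$(power of $\LL$). One can then compute $\sigma^n$ of such a coefficient using only two facts that \emph{do} hold in any $\lambda$-ring: $\sigma^n(x\LL^k)=\sigma^n(x)\LL^{kn}$, and $\sigma^n(g\cdot m)=P^n(\sigma^\cdot(g),\sigma^\cdot(m))$ for $m\in\mathbb{Z}$ (this last identity follows purely from additivity $\sigma_{ma}(t)=\sigma_a(t)^m$, not from specialness). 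Your computations $\sigma^n(h_{a,b})=h_{an,b}$ and $h_{a,b}\cdot_{\mathrm{cw}} h_{c,d}=h_{e,\ell}$ are correct and match the paper's computations on these refined generators, but without the partial-fraction reduction they do not suffice.
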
 
\begin{proof}
One can use the Euclidean algorithm and the fact that $1-\LL^a$ is invertible in $R=\hat{\KK}^{\muu}(\Sta/\Symm X_0)$ for every $0\neq a\in \mathbb{Z}$ to expand any product $g_\Lambda \prod_{i\in \Lambda} \frac{\LL^{-a_i}T^{b_i}}{1-\LL^{-a_i}T^{b_i}}$ into partial fractions $\sum_{j\in J} g_j\frac{T^{r_j}\LL^{-a_j}T^{b_j}}{(1-\LL^{-a_j}T^{b_j})^{l_j}}$ with $g_j\in R$, $a_j,b_j,l_j,$ $r_j\in \mathbb{Z}$ such that $b_j,l_j>0$, $r_j\ge 0$. Thus, $R[[T,T^{-1}]]_{rat}$ is the $R$-linear span of elements of the form
\[ \sum_{m\in \mathbb{Z}} { m+l-1 \choose l-1 } \LL^{-am}T^{bm+r} \;\mbox{ or more generally }\; \sum_{m\in \mathbb{Z}} f(m) \LL^{-am}T^{bm+r} \]
with $a,b,r,l \in\mathbb{Z}$ such that $b,l>0$, $r\ge 0$ and $f\in \mathbb{Q}[X]$ with $f(\mathbb{Z})\subset \mathbb{Z}$ because the space of these functions is spanned over $\mathbb{Z}$ by binomial coefficients. Notice that the space of these rational polynomials is closed under multiplication and composition. If the product of  two series as above is not zero, we get 
\[ \sum_{m\in \mathbb{Z}}f(m)\LL^{-am}T^{bm+r} \cdot \sum_{n\in\mathbb{Z}} f'(n)\LL^{-a'n}T^{b'n+r'} = \sum_{p\in \mathbb{Z}} f''(p)\LL^{-a''p}T^{b''p+r''}\]
with $b''=\lcm(b,b')$, $a''=ab''/b + a'b''/b'$, $r''=bm_0+r=b'n_0+r'$ and $f''(p)=f(pb''/b+m_0)f'(pb''/b'+n_0)$ where $(m_0,n_0)\in \mathbb{N}_{>0}^2$ is the smallest solution of $bm+r=b'n+r'$. This part of the lemma has already been proven by Denef and Loeser (see Prop.\ 5.1.1. and Prop.\ 5.1.2 in \cite{DenefLoeser3}). \\
Using the basic properties of the $\lambda$-ring $R$ we also obtain
\begin{eqnarray*} \lefteqn{ \sigma^n\Bigr( g \sum_{m\in\mathbb{Z}} f(m) \LL^{-am}T^{bm+r}\Bigl) \;=\; \sum_{m\in \mathbb{Z}} \sigma^n(gf(m))\LL^{-anm}T^{bm+r} } \\ 
& & = \sum_{m\in \mathbb{Z}} P^n(\sigma^1(g),\ldots, \sigma^n(g),\sigma^1(f)(m),\ldots, \sigma^n(f)(m)) \LL^{-anm}T^{bm+r}
\end{eqnarray*}
for any $g\in R$, where we used the universal polynomials $P^n$ from the definition of a special $\lambda$-ring as well as the polynomials $\sigma^n(f)\in \mathbb{Q}[X]$ with values $\sigma^n(f)(m)={ f(m) + n-1 \choose n}$ in $\mathbb{Z}$ for every $m\in \mathbb{Z}$ (cf.\ Example \ref{lambdarings} (1)). As $R[[T,T^{-1}]]_{rat}$ is closed under multiplication, it is, therefore, also a $\lambda$-subring. The same arguments show that $TR[[T]]_{rat}$ is a $\lambda$-subring of $TR[[T]]$.
\end{proof}

The following theorem generalizes the classical Thom--Sebastiani theorem to the motivic vanishing cycle.

\begin{Theorem}[\cite{DenefLoeser3}, Theorem 5.2.2] \label{ThomSebastiani}
Let $X$, $Y$ be smooth varieties and let $f:X\rightarrow \AA_k^1$ and $g:Y \rightarrow \AA_k^1$ be regular maps. Define the map $f+g:X\times_k Y\ni (x,y) \longmapsto f(x)+g(y)\in  \AA_k^1$ and observe $X_0\times_k Y_0 \subset (X\times_k Y)_0$. Then  
\[\phi_{f+g}|_{X_0\times_k Y_0}= \phi_f\boxtimes_k\phi_g, \] 
where we used the obviously defined exterior product of motives.
\end{Theorem}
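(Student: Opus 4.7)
The plan is to reduce the statement above to the equivariant Thom--Sebastiani of Proposition~\ref{basicprop}(3) via the arc-space generating series $Z^{eq}_h(T)$, mimicking the strategy used in the proof of Theorem~\ref{symvanishing}.

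By Proposition~\ref{vanishing}(1), $\phi_{f+g}$ is supported on $\Crit(f+g) = \Crit(f)\times_k\Crit(g) \subseteq X_0\times_k Y_0$, so the restriction on the left-hand side loses no information and we may view $\phi_{f+g}$ as an element of $\KK^{\muu}(\Var/X_0\times_k Y_0)[\LL^{-1/2}]$. Since $\phi_h = -Z^{eq}_h(\infty)$, the theorem will follow from an identity of generating series together with an evaluation at $T=\infty$.

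The geometric input is the pair of canonical identifications
\[ \Lo_n(X\times_k Y) \cong \Lo_n(X)\times_k \Lo_n(Y), \qquad \Lo_n(X\times Y)|_{X_0\times Y_0} \cong \Lo_n(X)|_{X_0}\times_k \Lo_n(Y)|_{Y_0}, \]
under which the $n$-th coefficient function factors as $(f+g)_n(\gamma_X,\gamma_Y) = f_n(\gamma_X) + g_n(\gamma_Y)$, a sum of two $\GG_m$-equivariant weight-$n$ functions. Applying Proposition~\ref{basicprop}(3) at every order $n$ gives, coefficient-by-coefficient in $T$,
\[ Z^{eq}_{f+g}(T)\big|_{X_0\times_k Y_0} \;=\; Z^{eq}_f(T)\,\boxtimes_k\, Z^{eq}_g(T), \]
where the right-hand side is the exterior product performed separately on each coefficient of $T$; by Theorem~\ref{resolution} all three series are Taylor expansions at $T=0$ of rational functions regular at $T=\infty$.

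To conclude, I would establish an exterior-product analogue of Lemma~\ref{conjecture}: the map $g(T)\mapsto -g(\infty)$ on coefficient-wise-multiplied rational functions regular at infinity is multiplicative, in the sense that $-(g\boxtimes_k h)(\infty) = (-g(\infty))\boxtimes_k (-h(\infty))$. Combining this with the factorization above and the bilinearity of $\boxtimes_k$ gives the chain
\[ \phi_{f+g}\big|_{X_0\times_k Y_0} = -Z^{eq}_{f+g}(\infty)\big|_{X_0\times_k Y_0} = \bigl(-Z^{eq}_f(\infty)\bigr)\,\boxtimes_k\,\bigl(-Z^{eq}_g(\infty)\bigr) = \phi_f\,\boxtimes_k\,\phi_g. \]
The main obstacle is matching the product arising on the right-hand side of Proposition~\ref{basicprop}(3)---\emph{a priori} the convolution product of $\KK^{\muu}(\Sta/\SS)$ for a monoidal base $\SS$---with the naive exterior product $\boxtimes_k$ of the statement. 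Using the description at the end of Section~4 that expresses the convolution via $J^d_{2,0}$ and $J^d_{2,1}$ twists, one must verify that after restriction to the fibers $\{0\}$ and $\{1\}$ used in defining $\phi^{eq}$ and after the quotient from $\KK^{\GG_m,d}(\Var/\AA^1_\SS)$ to $\KK^{\mu_d}$, these twists collapse precisely onto the diagonal $\muu$-action defining $\boxtimes_k$; this is the delicate monodromy-tracking step that makes the theorem genuinely more than a formal consequence of Proposition~\ref{basicprop}(3).
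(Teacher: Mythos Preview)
Your approach is essentially the paper's own: reduce to the equivariant Thom--Sebastiani of Proposition~\ref{basicprop}(3) via the arc-space generating series, exactly as in the proof of Theorem~\ref{symvanishing}. The paper in fact says no more than ``the proof follows exactly the same lines as in the proof of Theorem~\ref{symvanishing}''; you have correctly unpacked what that means.

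Two remarks. First, the ``obstacle'' you flag at the end is not an obstacle: the exterior product $\boxtimes_k$ in the statement \emph{is} the exotic convolution product of section~4 (this is the whole reason Looijenga and Denef--Loeser introduced that product), so Proposition~\ref{basicprop}(3) already lands in the right place and there is no further monodromy-matching to perform. Second, the ``exterior-product analogue of Lemma~\ref{conjecture}'' you need is only its multiplicative half---that $TR[[T]]_{rat}$ is a subring and that $g_0(T)\mapsto -g(\infty)$ is a ring homomorphism---which is the easier part, already attributed in the proof of Lemma~\ref{conjecture} to Denef--Loeser. So your outline is complete once you drop the final hedge.

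The paper also records a slicker alternative you might enjoy: apply Theorem~\ref{symvanishing} with $n=2$ to the function $h\colon X\sqcup Y\to\AA^1_k$ equal to $f$ on $X$ and $g$ on $Y$, then restrict to $X\times_k Y\hookrightarrow\Symm^2(X\sqcup Y)$; since $\SSymm^2_+(h)|_{X\times_k Y}=f+g$ and $\sigma^2$ contains the product as a summand, the Thom--Sebastiani identity drops out.
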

\begin{proof}
Using Proposition \ref{basicprop} (3) the proof follows exactly the same lines as in the proof of Theorem \ref{symvanishing}. \\
One could also deduce the theorem directly from Theorem \ref{symvanishing} by replacing $f:X\rightarrow \AA_k^1$ in Theorem \ref{symvanishing} with the unique morphism $h: X\sqcup Y \rightarrow \AA_k^1$ which is $f$ on $X$ and $g$ on $Y$. As $X_0\times_k Y_0 \hookrightarrow \Symm^2(X\times_k Y)_0$ Theorem \ref{symvanishing} will prove the Thom--Sebastiani theorem because $\SSymm_+^2(h)$ coincides with $f+g$ on $X\times_kY \hookrightarrow \Symm^2(X\sqcup Y)$. 
\end{proof} 
\begin{Remark}
It is not difficult to see that the critical locus of $\SSymm_+^n(f)$ coincides with $\Symm^n\Crit(f)\subset \Symm^n X_0 \subset (\Symm^n X)_0$ and similarly $\Crit(f+g)=\Crit(f)\times_k \Crit(g)$. By Proposition \ref{vanishing} (1) we can, therefore, ignore the restriction to $\Symm^n X_0$ in Theorem \ref{symvanishing} resp.\ to $X_0\times_k Y_0$ in Theorem \ref{ThomSebastiani}. 
\end{Remark}

For a locally closed subvariety $X^s\subset X$ we define the element
\[  \int_{X^s} \phi_f:= \int_{f^{-1}(0)\cap X^s} \phi_f|_{f^{-1}(0)\cap X^s} \]
in $\KK^{\muu}(\Var/k)[\LL^{-1/2}]$. As an analogue to Proposition \ref{basicprop} we have the following proposition. 
\begin{Proposition} \label{basicprop2}
Let $X,Y$ and $Z$ be equidimensional smooth varieties of finite type over $\BB$. Assume, moreover, that $X^s,Y^s,Z^s$ are locally closed subvarieties in $X,Y$ resp.\ $Z$. Let $\pi:Z \rightarrow X$, $f:X \rightarrow \AA^1_\BB$ and $g:Y \rightarrow \AA^1_\BB$ be regular maps and assume that $\pi$ is smooth. Then
\begin{enumerate}
\item $\int_{X^s}\phi_f=\LL^{-\frac{\dim X}{2}} [X^s]$ if $f\equiv 0$,
\item $\int_{X^s\sqcup X^s_2} \phi_f=\int_{X^s}\phi_f + \int_{X^s_2} \phi_f$ for any locally closed subvariety $X^s_2\subset X$ not intersecting $X^s$,
\item $\int_{X^s\times Y^s}\phi_{f+g}=\int_{X^s}\phi_f \cdot \int_{Y^s}\phi_g$ (Thom--Sebastiani),
\item $\LL^{n\dim X/2}\int_{\Symm^n X^s} \phi_{\SSymm_+^n(f)}=\sigma^n( \LL^{\dim  X/2}\int_{X^s}\phi_f)$ for all $n \ge 0$,
\item $\int_{Z^s}\phi_{f\circ \pi}= \LL^{-\frac{r}{2}}[\mathcal{F}]\int_{X^s}\phi_f$ if $\pi|_{Z^s}:Z^s\rightarrow X^s$ is a Zariski locally trivial fibration with $r$-dimensional fiber $\mathcal{F}$. The same holds if $\pi|_{Z^s}$ is a vector bundle or a $\Gl_k(n)$-principal bundle.
\end{enumerate}
\end{Proposition}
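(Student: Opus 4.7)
The plan is to deduce each item from its already-established unintegrated counterpart — Proposition \ref{vanishing}, Theorem \ref{ThomSebastiani}, or Theorem \ref{symvanishing} — by restricting the relevant motive to the appropriate subvariety and pushing forward to $\Spec k$. The unifying simplification is Proposition \ref{vanishing}(1): since $\phi_f$ is supported on $\Crit(f) \subset X_0$, integration over any locally closed $X^s$ may always be replaced by integration over $X^s \cap X_0$, matching the definition of $\int_{X^s}\phi_f$.

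Items (1) and (2) are essentially immediate. For (1), Proposition \ref{vanishing}(2) gives $\phi_f = \LL^{-\dim X/2}[X\xrightarrow{\mathrm{id}}X]$ when $f \equiv 0$; restriction to $X^s$ (all of $f^{-1}(0)\cap X^s$, since $X_0 = X$) and pushforward to $\Spec k$ yields $\LL^{-\dim X/2}[X^s]$. Item (2) is additivity of pushforward along the disjoint decomposition $X^s\sqcup X^s_2$, applied to $\phi_f$ restricted to $(X^s\sqcup X^s_2)\cap X_0$.

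For (3), Theorem \ref{ThomSebastiani} identifies $\phi_{f+g}|_{X_0\times Y_0}$ with $\phi_f\boxtimes \phi_g$. Because $\Crit(f+g)\subset X_0\times Y_0$, the integral $\int_{X^s\times Y^s}\phi_{f+g}$ reduces to the pushforward of $\phi_f\boxtimes \phi_g$ over $(X^s\cap X_0)\times (Y^s\cap Y_0)$, and Fubini for pushforward to $\Spec k$ turns this exterior product into the product $\int_{X^s}\phi_f\cdot \int_{Y^s}\phi_g$. For (4), Theorem \ref{symvanishing} gives $\LL^{nd/2}\phi_{\SSymm_+^n(f)}|_{\Symm^n X_0} = \sigma^n(\LL^{d/2}\phi_f)$; the support statement $\Crit(\SSymm_+^n f) = \Symm^n\Crit(f) \subset \Symm^n X_0$ again lets us restrict to $\Symm^n(X^s\cap X_0)$, after which both sides are pushed forward to $\Spec k$. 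The one non-formal input here is that this pushforward is a $\lambda$-ring homomorphism and hence commutes with $\sigma^n$; this is exactly Proposition \ref{Kaiso3} applied in the relative setting of Remark \ref{relative}.

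For (5), Proposition \ref{vanishing}(3) yields $\phi_{f\circ \pi} = \LL^{-r/2}\pi^\ast\phi_f$. Restricting to $Z^s$ and pushing forward to $\Spec k$, the projection formula for a Zariski locally trivial fibration with fiber $\mathcal{F}$ contributes the factor $[\mathcal{F}]$, producing $\LL^{-r/2}[\mathcal{F}]\int_{X^s}\phi_f$. When $\pi|_{Z^s}$ is a vector bundle we substitute $[\mathcal{F}] = \LL^r$ via relation (\ref{relation}); for a $\Gl_k(n)$-principal bundle we substitute $[\mathcal{F}] = [\Gl_k(n)]$ via Lemma \ref{principal}. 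The entire proof is essentially bookkeeping; the only place requiring genuine care is (4), and even there the compatibility one needs — pushforward respecting the $\lambda$-structure — is a direct application of Proposition \ref{Kaiso3} rather than a new calculation.
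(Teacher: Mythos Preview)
Your proof is correct and follows the same approach as the paper, which simply notes that (1)--(4) are immediate from the definition together with Proposition~\ref{vanishing}, Theorem~\ref{ThomSebastiani} and Theorem~\ref{symvanishing}, and that (5) follows from Proposition~\ref{vanishing}(3) plus the projection formula. Your write-up is in fact more detailed than the paper's; the only minor imprecision is that the fact ``pushforward along a monoid homomorphism is a $\lambda$-ring homomorphism'' is stated in the text just before Proposition~\ref{Kaiso3} rather than in the proposition itself.
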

The properties (1) to (4) follow directly from the definition and properties stated before. To show (5) we mention $\phi_{f\circ \pi}|_{Z^s}=\LL^{-r/2}(\pi^\ast \phi_f)|_{Z^s}=\LL^{-r/2}(\pi|_{Z^s})^\ast \phi_f$ by Proposition \ref{vanishing} (3). Now apply the projection formula. \\

The following theorem is very useful for computing the motivic vanishing cycle in a $\GG_m$-equivariant situation. Let $X$ be a smooth equidimensional variety locally of finite type equipped with a $\GG_m$-action. Let us assume that every closed point has an open neighborhood which as a variety with $\GG_m$-action is isomorphic to $\AA_k^r\times Z$ with $\GG_m$ acting via $g\cdot(v_1,\ldots,v_r,z)=(g^{w_1}v_1,\ldots, g^{w_r}v_r, z)$ for $g\in G, (v_1,\ldots,v_r)\in \AA^r_k, z\in Z$ with strictly positive weights $w_1,\ldots,w_r$. In particular, $Z$ is the intersection of the neighborhood with the fixed point set $X^{\GG_m}$ and, hence, smooth. Moreover, it is not difficult to see that the projection to $Z$ along $\AA_k^r$ can be described by $\lim_{g \to 0} g\cdot x$ and extends, therefore, to a smooth map $X \rightarrow X^{\GG_m}$. The assumption is not very restrictive, as any smooth projective variety with $\GG_m$-action has a dense open subset satisfying our assumption by a theorem of Bia{\l}ynicki--Birula \cite{Bialynicki-Birula1},\cite{Bialynicki-Birula2}. Conjecturally, any smooth quasiprojective variety $X$ with $\GG_m$-action such that $X^{\GG_m}$ is connected and $\lim_{g \to 0} g\cdot x$ exists for any closed point $x$ should satisfy our assumption. 
\begin{Theorem} \label{theorem1}
 Let $X$ be a smooth variety with $\GG_m$-action satisfying the assumption mentioned above with all weights equal to one. Let $f:X \rightarrow \AA_k^1$ be a $\GG_m$-equivariant morphism of degree $d$, i.e.\ $f(g\cdot x)=g^df(x) \;\forall g\in\GG_m, x\in X$. Let $\muu$ act on $f^{-1}(1)$ via $\mu_d$ and trivially on $f^{-1}(0)$. Then
 \[ \int_{X} \phi_f = \int_{X}\phi^{eq}_f=\LL^{-\frac{\dim X}{2}}\bigl([f^{-1}(0)] - [f^{-1}(1)] \bigr) \;\;\mbox{in} \;\KK^{\muu}(\Var/k)[\LL^{-1/2}].\]
\end{Theorem}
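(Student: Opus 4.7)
The second equality is immediate from the definition of $\int_X \phi^{eq}_f$ in Section~4; the real content is the first equality $\int_X \phi_f = \int_X \phi^{eq}_f$. My plan is to compute $\phi_f = -Z^{eq}_f(\infty)$ directly from the arc-space generating series
\[ Z^{eq}_f(T) = \sum_{n\ge 1} \int_{\Lo_n(X)|_{X_0}} \phi^{eq}_{f_n} \,T^n \]
by exploiting the geometric $\GG_m$-action on $X$.

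The crucial observation is that this $\GG_m$-action on $X$ induces a second $\GG_m$-action on each arc space $\Lo_n(X)$, distinct from the arc-rescaling action used implicitly in Section~5. Under this new action $f_n$ is $\GG_m$-equivariant of fixed degree $d$ (independent of $n$), and the weight-one local model on $X$ lifts verbatim to $\Lo_n(X)$. I would use the stratification $\Lo_n(X)|_{X_0} = \Lo_n(X_0) \sqcup \bigsqcup_{m=1}^n (\pi^n_m)^{-1}(\Lo_{m,\GG_m}(X))$: the middle strata with $1 \le m < n$ contribute zero to $Z^{eq}_f(T)$ exactly as in the discussion preceding Theorem~\ref{resolution}; the stratum $\Lo_n(X_0)$ produces the $[X_0]$-term; and the main computation reduces to the top stratum $\Lo_{n,1}(X) = f_n^{-1}(1) \cap \Lo_{n,\GG_m}(X)$. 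Using the additional geometric $\GG_m$-action and the weight-one hypothesis, I expect to identify $\Lo_{n,1}(X)$ motivically (up to an explicit affine fibration) with $f^{-1}(1)$, so that $Z^{eq}_f(T)$ sums to an explicit rational function whose value at $T = \infty$ yields $-\LL^{-\dim X/2}[f^{-1}(1)]$ with the correct $\muu$-action.

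An alternative is to apply Theorem~\ref{resolution} to a $\GG_m$-equivariant embedded resolution $\pi : Y \to X$ of $X_0 \subset X$ (which exists by equivariant Hironaka) and integrate, thereby reducing the theorem to the motivic identity
\[ \sum_{\emptyset \ne I \subset J}(-1)^{|I|}[f_I^{-1}(1)] \;=\; -[f^{-1}(1)] \quad \text{in } \KK^{\muu}(\Var/k)[\LL^{-1/2}], \]
via the isomorphism $f^{-1}(1) \cong (f\circ\pi)^{-1}(1)$. This inclusion-exclusion-type relation should then follow by a motivic stratification argument, using the $\GG_m$-equivariance of the resolution and of $f\circ\pi$ to control how $(f\circ\pi)^{-1}(1)$ degenerates onto each exceptional stratum $E_I^\circ$.

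The main obstacle in either approach is reconciling the several a priori distinct $\muu$-actions at play: on the right-hand side $\muu$ acts on $f^{-1}(1)$ via the geometric $\mu_d \subset \GG_m$ stabilising the fibre; in the arc-space formula, the $\muu$-actions on $f_n^{-1}(1)$ come from arc-rescaling monodromy $\mu_n$; in the resolution formula, the $\muu$-actions on $f_I^{-1}(1)$ come from $\mu_{m_I}$. These must all assemble to the same class in $\KK^{\muu}(\Var/k) = \varinjlim_d \KK^{\mu_d}(\Var/k)$, and this is where the weight-one hypothesis on the $\GG_m$-action becomes essential---it is what makes the interplay between the two $\GG_m$-actions on $\Lo_n(X)$ clean enough to carry the comparison through.
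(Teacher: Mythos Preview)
Your second (resolution) approach is the one the paper takes, but with a decisive twist you have not identified: the paper does not use an arbitrary $\GG_m$-equivariant embedded resolution. It first reduces, by an open-cover/inclusion--exclusion argument, to the local model $X=\AA^r_k\times Z$, and there it \emph{constructs} a specific resolution: blow up the fixed locus $Z$, so that the blowup is a line bundle $Y\to E_0$ over the exceptional divisor $E_0$, and then resolve further only inside $E_0$. The payoff of this particular choice is that for every nonempty $I\subset\{1,\dots,m\}$ one has $N_I=N_{I\cup\{0\}}$ as varieties and $f_I=f_{I\cup\{0\}}$ as functions, so the alternating sum $\sum_{\emptyset\ne I}(-1)^{|I|}[f_I^{-1}(1)]$ collapses in pairs $I\leftrightarrow I\cup\{0\}$, leaving only the term $I=\{0\}$, which is literally $[f_{\{0\}}^{-1}(1)]=[f^{-1}(1)]$. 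For a generic equivariant resolution no such pairing is available, and the bare inclusion--exclusion identity you wrote is not something one can prove by ``stratifying $(f\circ\pi)^{-1}(1)$''---the $f_I^{-1}(1)$ live in the normal bundles $N_I$, not in $Y$.

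You are right that reconciling the $\muu$-actions is the crux. Even with the pairing above, $f_I^{-1}(1)$ carries a $\mu_{m_I}$-action while $f_{I\cup\{0\}}^{-1}(1)$ carries a $\mu_{m_I+d}$-action. The paper settles this by a local monomial change of coordinates on $N_I\cong N_{I\cup\{0\}}$ showing that both actions factor through the same $\mu_e$-Galois cover of $E^\circ_{I\cup\{0\}}$, with $e=\gcd(m_j:j\in I\cup\{0\})$, hence give the same class in $\KK^{\muu}(\Var/k)$. The weight-one hypothesis is what makes the first blowup a line bundle over $E_0$ with $E_0$ of multiplicity exactly $d$, which is what drives this whole mechanism.

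Your first (arc-space) approach has a genuine gap. The claim that $\Lo_{n,1}(X)$ is, up to an explicit affine fibration, just $f^{-1}(1)$ is false in general: the classes $[\Lo_{n,1}(X)]$ are governed by the full resolution data $(m_i,\nu_i)$ via Denef--Loeser's formula, not by a single fibration over $f^{-1}(1)$. Moreover, if you replace the arc-rescaling $\GG_m$-action on $\Lo_n(X)$ by the geometric one induced from $X$, the series you write down is no longer the $Z^{eq}_f(T)$ of Section~5 (the $\muu$-structures on the coefficients change), and you would have to prove independently that it is rational with value $-\phi_f$ at $T=\infty$; nothing in the paper gives you that for free.
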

\begin{proof}
Let us first assume $X=\AA_k^r\times Z$ with $\GG_m$ acting nontrivially only on the affine ``fiber'' $\AA_k^r$ by scalar multiplication. Consider the blow-up $\tilde{X}$ of $Z$ in $X$ which has a natural fibration towards the exceptional divisor $\tilde{E}_0$ induced by the affine fibration $X \rightarrow Z$. Moreover, the $\GG_m$-action has a lift to $\tilde{X}$ with fixed point set $\tilde{E}_0$ and it is not difficult to see that $\tilde{X}$ as a variety with $\GG_m$-action is isomorphic to the normal bundle $N:=N_{\tilde{E}_0|\tilde{X}}$ with $\GG_m$ acting by scalar multiplication on the fibers. Denote by $\tilde{E_i}$ for $i=1,\ldots, l$ the strict transforms of the irreducible components of the divisor $f^{-1}(0)$. As they are closed and $\GG_m$-invariant, we get $\tilde{E_i}=N|_{\tilde{D}_i}$ for the divisors $\tilde{D}_i=\tilde{E}_0\cap\tilde{E}_i$ in $\tilde{E}_0$. Note that the collection $(\tilde{D}_i)_{i=1}^l$ might not be a normal crossing divisor in $\tilde{E}_0$. \\
Let $\sigma:E_0 \rightarrow \tilde{E}_0$ be an embedded resolution of the $\tilde{D}_i$, i.e.\ the strict transforms $D_i \;(i=1,\ldots,l)$ together with the exceptional divisors $D_{l+1},\ldots, D_m$ of $\sigma$ form a normal crossing divisor in $E_0$. Moreover, $E_0$ is smooth and $\sigma: E_0\setminus \cup_{i=1}^m D_i \xrightarrow{\;\sim\;} \tilde{E}_0\setminus \cup_{i=1}^l\tilde{D}_i$. Consider the pull-back $Y:=\sigma^\ast N$ and the normal crossing divisors $E_0, \:E_i:=(\sigma^\ast N)|_{D_i}$ for $i=1, \ldots, m$ in $Y$ along with the proper morphism given by the composition $\pi: Y=\sigma^\ast N \xrightarrow{\sigma} N=\tilde{X} \rightarrow X$. By construction $(f\circ\pi)^{-1}(0)=\cup_{i=0}^m E_i$ set-theoretically and $\pi: Y\setminus \cup_{i=0}^m E_i \xrightarrow{\;\sim\;} X\setminus f^{-1}(0)$. We will use this embedded resolution of $X_0=f^{-1}(0)$ to compute $\int_{X} \phi_f$. \\
As $Y\xrightarrow{p} E_0$ is a line bundle, we get $N_{E_0|Y}=Y$ and $N_{\{0\}}=Y\setminus \cup_{i=0}^m E_i$. The induced map $f_{\{0\}}$ is just $f\circ \pi$ as the latter is homogeneous and $E_0$ is of multiplicity $d$. Moreover, after identifying $N_{\{0\}}$ with $X\setminus f^{-1}(0)$ by means of $\pi$, we get $f_{\{0\}}=f$ on $N_{\{0\}}$ and, thus, $f_{\{0\}}^{-1}(1)=f^{-1}(1)$ with $\muu$-action given by the natural $\mu_d$-action on $f^{-1}(1)$. \\
On the other hand, for any $i=1, \ldots, m$ we have by construction
\[ N_{E_i|Y}=p^\ast N_{D_i|E_0}= N_{E_i|Y}|_{E_0\cap E_i} \times N_{E_0|Y}|_{E_0\cap E_i} \] and, thus, for any $\emptyset\neq I\subset \{1,\ldots,m\}$ 
\[ N_I=\prod_{i\in I} (N_{E_i|Y}\setminus E_i)|_{E_I^\circ} =  \prod_{i\in I\cup \{0\}} ( N_{E_i|Y}\setminus E_i)|_{E^\circ_{I\cup\{0\}}}=N_{I\cup\{0\}} .\]
Moreover, by $\GG_m$-equivariance of $f\circ \pi$ the induced maps $f_I$ and $f_{I\cup \{0\}}$ coincide, which can be checked by a local calculation. Hence, $f_I^{-1}(1)=f_{I\cup\{0\}}^{-1}(1)$. Unfortunately, the $\muu$-actions are different but, nevertheless, the $\muu$-equivariant motives $[f_I^{-1}(1)]$ and $[f_{I\cup\{0\}}^{-1}(1)]$ coincide. To see this we choose local functions $(z_i)_{i\in I\cup\{0\}}$ in some $\GG_m$-invariant neighborhood $V=p^{-1}(V\cap E_0)$ of $y\in E^\circ_{I\cup\{0\}}$ such that $E^\circ_{I\cup\{0\}}\cap V$ is the zero locus of $z_0\prod_{i\in I} z_i$. Hence, $N_{I}|_V\cong N_{I\cup\{0\}}|_V \cong  E^\circ_{I\cup\{0\}}\cap V \times \GG_m \times \GG_m^I$ and $f_I=f_{I\cup\{0\}}=uz_0^d\prod_{i\in I}z_i^{m_i}$ with $u$ being a unit on $V$, $m_i>0$ being the multiplicities of $E_i$ in $(f\circ \pi)^{-1}(0)$ and with $z_i$ identified with the coordinates on the corresponding ``normal'' $\GG_m$-factors. The action of $\muu$ via $\mu_{m_I}$ with $m_I=\sum_{i\in I}m_i$ is given by diagonal embedding of $\mu_{m_I}$ into $\GG_m^I$ and similarly for $\mu_{m_{I\cup\{0\}}}=\mu_{m_I+d}$. \\
However, we can choose an automorphism of $\GG_m\times \GG_m^I$ mapping $(z_i)_{i\in I\cup\{0\}}$ to $(\prod_{j\in I\cup\{0\}} z_j^{a_j^{(i)}})_{i\in I\cup\{0\}}$ with $(a^{(i)})_{I\cup\{0\}}$ being a basis of the group $\mathbb{Z}\times \mathbb{Z}^{I}$ of characters of $\GG_m\times \GG_m^I$ such that $a^{(0)}_i=m_i/e$ for all $i\in I\cup\{0\}$ with $e:=\gcd(m_j\mid j\in I\cup\{0\})$ and $m_0=d$. After this coordinate change on $N_{I}|_V$ the function $f_I$ is given by $uz_0^e$. Using relation (\ref{relation}) we see that $[f_I^{-1}(1)]$ is given by   $(\LL-1)^{|I|}[\tilde{E}^\circ_{I\cup\{0\}}]$ with $\tilde{E}^\circ_{I\cup\{0\}}$ being a Galois cover of $E^\circ_{I\cup\{0\}}$ with Galois group $\mu_e$, locally given by $\{(z,y)\in \AA^1_k\times E^\circ_{I\cup\{0\}}\cap V \mid z^eu(y)=1 \}$. Moreover, the group $\mu_{m_I}$  acts by its quotient group $\mu_e$. Exactly the same holds for $f_{I\cup\{0\}}^{-1}(1)$ acted on by $\mu_{m_{I\cup \{0\}}}$. \\
Thus, $[f_I^{-1}(1)]=[f_{I\cup\{0\}}^{-1}(1)]$ in $\KK^{\muu}(\Var/k)$ and their contributions to $\int_{X} \phi_f$ cancel.\footnote{Note that the projections to $X_0$ are different so that they do not cancel each other in the formula for $\phi_f$.} In the formula for $\int_{X} \phi_f$ we are left with the contribution $f_{\{0\}}^{-1}(1)=f^{-1}(1)$ which proves the theorem.\\

The general case is a purely combinatorial argument using the motivic behavior of the integral. Indeed, for any $\GG_m$-invariant open subset $U=\AA_k^r\times Z$  in $X$ we get by the previous arguments and Proposition \ref{basicprop} (5) and Proposition \ref{basicprop2} (5) 
\begin{equation} \int_{U}\phi_f  = \int_{U} \phi_{f|_{U}} = \int_{U}\phi^{eq}_{f|_{U}} = \int_{U}\phi^{eq}_f .\end{equation}
Let us now take a general smooth variety $X$ with $\GG_m$-action satisfying the assumptions of the theorem. Choose an open covering by $\GG_m$-invariant subsets of the form $U_i=\AA_k^{r_i}\times Z_i$ $(i\in I)$. If $U_i \cap U_j \neq \emptyset$, the intersection is of the form $\AA_k^{r_i} \times \tilde{Z}_i$ for some open $\tilde{Z}_i\subset Z_i$ as $U_i \cap U_j$ must contain the limits $\lim_{g \to 0} g\cdot x$ for all $x\in U_i\cap U_j$. By applying equation (\arabic{equation}) to any non-empty intersection $U_J=\cap_{i\in J}U_i$ we finally get  
\[ \int_{X}\phi_f = \sum_{U_J\neq \emptyset} c_J \int_{U_J}\phi_f =  \sum_{U_J\neq \emptyset} c_J \int_{U_J}\phi^{eq}_f = \int_{X}\phi^{eq}_f \]
using Proposition \ref{basicprop} (2) and Proposition \ref{basicprop2} (2), where the $c_J\in \mathbb{Z}$ are coefficients depending only on the combinatorics of the subsets $J\subset I$. 
\end{proof}
Unfortunately, we were not able to prove the theorem for arbitrary positive weights $w_1,\ldots,w_r> 0$. But we strongly believe that Theorem \ref{theorem1} holds also in that case.

\section{The one loop quiver with potential}

In this section we apply the results obtained in the previous parts to compute the motivic Donaldson--Thomas invariants of the one loop quiver with potential $W\in k[t]$. To simplify the notation we will assume that $\KK^{\muu}(\ldots/\mathbb{N}^r)$ for $r\ge 0$ is already completed with respect to the topology introduced in Remark \ref{Kaiso}. Let
\[ \XX = \coprod_{n\ge 0} \Mat_k(n,n)/\Gl_k(n) \]
be the stack of finite dimensional representations of the one loop quiver, i.e. representations of the ring $k[t]$. Equivalently, it can be seen as the stack parametrizing zero dimensional sheaves on $\AA^1_k$. To a given potential $W\in k[t]$ we associate for any $n\ge 0$ the $\Gl_k(n)$-equivariant function $W_n:\Mat_k(n,n)\ni A \longmapsto \tr W(A) \in \AA^1_k$. They induce a regular function $\WW:\XX \rightarrow \AA^1_k$ and its critical locus $\MM:=\Crit(\WW)$ is the stack we are interested in. The stacks $\XX$ and $\MM$ are natural monoids over $k$ as for any two representations we can form their direct sum. Let us also consider  the monoid homomorphism $\dim:\XX \rightarrow \mathbb{N}$ mapping each representation to its dimension or, equivalently, each sheaf to its length. Notice that the substack $\Crit(\WW)$ parametrizes sheaves supported (scheme-theoretically) on the zero-scheme $\Spec (k[t]/(W')) \subset \AA^1_k$ of $W'$. If we denote the line element $[\Spec (k) \rightarrow 1]\in \KK^{\muu}(\Sta/\mathbb{N})$ by $T$, we obtain
\[ \Phi_W(T):= \int_{\dim} \phi_\WW = \sum_{n\ge 0} \frac{\int_{\Mat_k(n,n)} \phi_{W_n}}{\LL^{-n^2/2}[\Gl_k(n)]}\,T^n = \Sym \Bigl( \frac{1}{\LL^{1/2} - \LL^{-1/2}} \sum_{n\ge 1} \Omega_n T^n \Bigr)  \]
in $\KK^{\muu}(\Sta/\mathbb{N})$ by definition of the Donaldson--Thomas invariants $\Omega_n$ (see \cite{Meinhardt3}). The stack $\XX$ carries a natural good $\GG_m$-action given by scalar multiplication on the space of matrices. If $W$ is homogeneous, we can, therefore, also consider the element\footnote{In the equivariant case we consider $\XX$ as a stack over $\mathbb{N}$.}
\[ \Phi_W^{eq}(T):=\int_{\XX} \phi^{eq}_{\WW}=\sum_{n\ge 0} \frac{\int_{Mat_k(n,n)}\phi^{eq}_{W_n}}{\LL^{-n^2/2}[\Gl_k(n)]}\,T^n  \]
in $\KK^{\muu}(\Sta/\mathbb{N})$, where we applied Proposition \ref{basicprop} (5). To compute the series $\Phi_W(T)$ and $\Phi_W^{eq}(T)$ we use the stack 
\[ \Hilb(\AA^1_k) = \coprod_{n\ge 0} H_n/\Gl_k(n) \cong \coprod_{n\ge 0 } \AA^n_k \]
with $H_n:=\{ (A,v)\in \Mat_k(n,n)\times \AA_k^n \mid \Span_k(v,Av,\ldots,A^{n-1}v)=k^n\}$ equipped with the obvious $\Gl_k(n)$-action. Let $\GG_m$ act on $H_n$ by scalar multiplication and on $\AA^n_k$ with weights $1,\ldots,n$. The isomorphism is induced by the $\Gl_k(n)$-principal bundle $q_n:H_n\ni (A,v) \longmapsto (\tr A^1, \ldots, \tr A^n)\in \AA^n_k$.  If for any $A\in \Mat_k(n,n)$ we express $\tr W(A)$  in terms of $\tr(A^i)$ with $i\in \{1,\ldots,n\}$, we obtain functions $f_n:\AA_k^n \rightarrow \AA_k^1$ making the diagram     
\[ \xymatrix { H_n \ar[r]^{q_n} \ar[d]_{p_{\Mat}} & \AA_k^n \ar[d]^{f_n} \\ \Mat_k(n,n) \ar[r]^-{W_n} &  \AA^1_k } \]
commutative and $\GG_m$-equivariant if $W$ is homogeneous. 
\begin{Lemma} \label{wallcrossing}
 Using the notation just introduced, one has
\begin{eqnarray*} & &  \Phi_W(\LL T)/ \Phi_W(T) =\sum_{n\ge 0} \int_{\AA^n_k} \phi_{f_n} (\LL^{1/2}T)^n \mbox{ and }\\
 & &  \Phi_W^{eq}(\LL T)/ \Phi_W^{eq}(T) =\sum_{n\ge 0} \int_{\AA^n_k}\phi^{eq}_{f_n} (\LL^{1/2}T)^n \mbox{ for homogeneous } W
\end{eqnarray*}
in  $\KK^{\muu}(\Sta/\mathbb{N})\cong \KK^{\muu}(\Var/k)[[\Gl_k(n)]^{-1}, n\in \mathbb{N}][[T]].$
\end{Lemma}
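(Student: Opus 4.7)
The plan is to compute $A_n := \int_{\Mat_k(n,n) \times k^n} \phi_{W_n \circ p_{\Mat}}$ in two ways. On one hand, since $p_{\Mat}$ is a smooth projection of relative dimension $n$, Proposition~\ref{vanishing}~(3) combined with the projection formula (i.e.\ Proposition~\ref{basicprop2}~(5) applied to the trivial vector bundle $\Mat_k(n,n) \times k^n \to \Mat_k(n,n)$) gives $A_n = \LL^{n/2} B_n$, where $B_n := \int_{\Mat_k(n,n)} \phi_{W_n}$; note that $b_n := B_n / (\LL^{-n^2/2} [\Gl_k(n)])$ is the coefficient of $T^n$ in $\Phi_W(T)$.

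On the other hand, I would stratify $\Mat_k(n,n) \times k^n = \bigsqcup_{l=0}^n \Sigma_l$ by the dimension $l$ of $\Span(v, Av, A^2 v, \ldots)$; the top stratum is $\Sigma_n = H_n$. For each $l$ there is a $\Gl_k(n)$-equivariant identification $\Sigma_l \cong \Gl_k(n) \times_{P_l} \Lambda_l$, with $P_l$ the parabolic stabilizing a reference $V' \subset k^n$ of dimension $l$ and $\Lambda_l := H_l \times \Mat_k(l,n-l) \times \Mat_k(n-l,n-l)$. The block-triangular form of $A$ relative to $V'$ gives $W_n \circ p_{\Mat} = W_l + W_{n-l}$ on $\Lambda_l$, independent of the middle factor. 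Using the commutative diagram $W_l \circ p_{\Mat} = f_l \circ q_l$ and the fact that $q_l : H_l \to \AA^l_k$ is a $\Gl_k(l)$-principal bundle, Proposition~\ref{basicprop2}~(5) yields $\int_{H_l} \phi_{W_l \circ p_{\Mat}} = \LL^{-l^2/2} [\Gl_k(l)] C_l$ with $C_l := \int_{\AA^l_k} \phi_{f_l}$. Applying Theorem~\ref{ThomSebastiani} to the three-factor decomposition of $\Lambda_l$ (together with Proposition~\ref{basicprop2}~(1) on the middle factor, where the potential vanishes) gives $\int_{\Lambda_l} \phi_{W_l + W_{n-l}} = \LL^{-l^2/2} [\Gl_k(l)] C_l \cdot \LL^{l(n-l)/2} \cdot B_{n-l}$. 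Further application of Proposition~\ref{basicprop2}~(5) to both the trivial $\Gl_k(n)$-fibration $\Gl_k(n) \times \Lambda_l \to \Lambda_l$ and the $P_l$-principal bundle $\Gl_k(n) \times \Lambda_l \to \Sigma_l$, combined with $[\Gl_k(n)]/[P_l] = [\Gl_k(n)/P_l]$ and the normalization factor $\LL^{-(n-l)/2}$ comparing the ambient vanishing cycle on $\Mat_k(n,n) \times k^n$ with the intrinsic one on $\Sigma_l$ (of codimension $n-l$), produces the stratum contribution
\[
\int_{\Sigma_l} \phi_{W_n \circ p_{\Mat}} = \LL^{-(n+l^2-l)/2} \, [\Gl_k(n)/P_l] \, [\Gl_k(l)] \, C_l \, B_{n-l}.
\]

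Equating $\LL^{n/2} B_n = \sum_{l=0}^n \int_{\Sigma_l} \phi_{W_n \circ p_{\Mat}}$ and using the identity $[\Gl_k(n)/P_l] [\Gl_k(l)] = [\Gl_k(n)]/([\Gl_k(n-l)] \LL^{l(n-l)})$, the $\LL$-exponents collapse to give $\LL^n b_n = \sum_{l=0}^n \LL^{l/2} C_l b_{n-l}$, which is exactly the $T^n$-coefficient of the identity $\Phi_W(\LL T) = \Phi_W(T) \cdot \sum_{l \geq 0} C_l (\LL^{1/2} T)^l$. For homogeneous $W$ every construction above is $\GG_m$-equivariant (scalar weight-one action on both $\Mat_k(n,n)$ and $k^n$, respecting $P_l$ and $q_l$), so the identical argument with $\phi^{eq}$ and Proposition~\ref{basicprop} replacing $\phi$ and Proposition~\ref{basicprop2} proves the second equation. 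The main obstacle is the careful tracking of $\LL$-exponents, especially the codimension correction arising from comparing the vanishing cycle in the ambient $\Mat_k(n,n) \times k^n$ with the intrinsic one on each stratum $\Sigma_l$.
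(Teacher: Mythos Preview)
Your overall strategy matches the paper's: both rest on Reineke's stratification of $\Mat_k(n,n)\times k^n$ by the dimension $l$ of $\Span(v,Av,\ldots)$, and both equate the trivial-bundle computation $\LL^{n/2}B_n$ with the sum over strata. The paper, however, packages this as an identity in the motivic Hall algebra $\KK(\Sta/\XX)$ and then applies the integration map $\int_{\dim}^{\phi_\WW}$, which it imports from \cite{Meinhardt3} as an \emph{algebra homomorphism} to $\KK^{\muu}(\Sta/\mathbb{N})$. That single citation absorbs the entire difficulty; the rest of the paper's proof is bookkeeping with Propositions~\ref{basicprop}(5), \ref{vanishing}(3) and \ref{basicprop2}(5).

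Your attempt to carry out the computation directly at the level of varieties has a genuine gap, and it is precisely the step you flag yourself. The ``codimension correction $\LL^{-(n-l)/2}$ comparing the ambient vanishing cycle on $\Mat_k(n,n)\times k^n$ with the intrinsic one on $\Sigma_l$'' is \emph{not} a consequence of Proposition~\ref{basicprop2}(5) or Proposition~\ref{vanishing}(3). Those results let you transport $\phi$ along a smooth morphism $\pi:Z\to X$ when the function lives on the base $X$; they do not allow you to restrict an ambient $\phi_{W_n\circ p_{\Mat}}$ to a locally closed stratum and replace it by the vanishing cycle of the restricted function with only a power-of-$\LL$ discrepancy. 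Concretely, $W_n\circ p_{\Mat}$ genuinely depends on the lower-left block of $A$, which lies in the normal direction to $\Lambda_l$, so there is no local splitting $U\cong \Sigma_l\times\AA^{n-l}$ through which the function factors. The identity you need is exactly the multiplicativity of $\int^{\phi_\WW}$ over the Hall product $\star$; this is a nontrivial motivic statement (ultimately an instance of the integral identity for vanishing cycles) and is what \cite{Meinhardt3} supplies. To close the gap you must either invoke that algebra-homomorphism property, as the paper does, or prove the required local comparison directly --- at which point you are reproducing the cited result rather than bypassing it.
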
 
\begin{proof}
The key observation is the following formula in the (equivariant) Hall algebra $\KK^{(\GG_m)}(\Sta/\XX)$ first observed by Reineke (cf.\ \cite[Lemma 5.1]{Reineke1})
\[ \left[\frac{\Mat_k(\Nn,\Nn)\times \AA^\Nn_k}{\Gl_k(\Nn)} \xrightarrow{p_{\Mat}}  \XX\right]=\sum_{n=0}^\Nn \left[\frac{H_n}{\Gl_n} \xrightarrow{p_{\Mat}} \XX\right] \star \left[\frac{\Mat_k(\Nn-n,\Nn-n)}{\Gl_k(\Nn-n)} \hookrightarrow \XX\right]. \]
We apply the algebra homomorphisms 
\begin{eqnarray*}
 \int_{\dim}^{\phi_{\WW}}: \KK(\Sta/\XX) \ni [\ZZ \xrightarrow{\pi} \XX] & \longmapsto & \int_{\dim\circ\pi} \pi^\ast \phi_{\WW} \in  \KK^{\muu}(\Sta/\mathbb{\mathbb{N}})\\
 \int_{\dim}^{\phi^{eq}_{\WW}} : \KK^{\GG_m}(\Sta/\XX) \ni [\ZZ \xrightarrow{\pi} \XX] & \longmapsto & \int_{\ZZ  } \phi^{eq}_{\WW\circ \pi} \in \KK^{\muu}(\Sta/\mathbb{\mathbb{N}})
\end{eqnarray*}
from the (equivariant) Hall algebras to $\KK^{\muu}(\Sta/\mathbb{\mathbb{N}})$ (see \cite{Meinhardt3}). By Proposition \ref{basicprop} (5), Proposition \ref{vanishing} (3) and Proposition \ref{basicprop2} (5) 
\begin{eqnarray*} \int_{\dim}^{\phi_{\WW}}\left[\frac{\Mat_k(\Nn,\Nn)\times \AA^\Nn_k}{\Gl_k(\Nn)} \xrightarrow{p_{\Mat}} \XX\right] &=& \LL^\Nn \int_{\dim}^{\phi_{\WW}} \left[\frac{\Mat_k(\Nn,\Nn)}{\Gl_k(\Nn)} \hookrightarrow \XX\right] \mbox{ and} \\
 \int_{\dim}^{\phi_{\WW}} \Bigl[H_n/\Gl_k(n) \xrightarrow{p_{\Mat}} \XX\Bigr] &=& \LL^{n/2}\int_{\AA^n_k} \phi_{f_n} 
\end{eqnarray*}
and similarly for the equivariant version. Multiplying with $T^{\Nn}$ and summing over $\Nn\ge 0$ proves the lemma. Notice that the Lemma can also be seen as a special wall-crossing formula (cf. \cite{Mozgovoy2}).  
\end{proof}

To compute the integral $\LL^{n/2}\int_{\AA^n_k} \phi_{f_n}$ we restrict ourselves firstly to the case $W=t^d$ of normalized homogeneous potentials. By Theorem \ref{theorem1} $\Phi_W(T)=\Phi^{eq}_W(T)$ and  
\[ \LL^{n/2}\int_{\AA^n_k} \phi_{f_n}=\LL^{n/2}\int_{\AA^n_k}\phi^{eq}_{f_n} \] 
follows from the previous lemma. Notice that $\Sym^n \AA_k^1 \cong \AA^n_k$, induced by 
\[ \tilde{q}_n:\AA^n_k\ni(z_1,\ldots,z_n)\longmapsto (z_1+\ldots+z_n, \ldots, z_1^n+\ldots+ z_n^n)\in \AA_k^n. \]
An easy calculation shows $f_n\circ \tilde{q}_n(z_1, \ldots, z_n)=z_1^d + \ldots + z_n^d$, in other words, $f_n=\SSym_+^n(W)$. By Proposition \ref{basicprop} (4) we finally obtain
\[ \LL^{n/2}\int_{\AA^n_k} \phi_{f_n}=\LL^{n/2}\int_{\AA^n_k} \phi^{eq}_{f_n} = \sigma^n\Bigl(\LL^{1/2}\int_{\AA^1_k}\phi^{eq}_{W}\Bigr)=\sigma^n\bigl( 1-[\mu_d]\bigr) \]
with $\mu_d$ carrying the obvious $\muu$-action. 

\begin{Theorem} \label{theorem2}
The motivic Donaldson--Thomas invariants for the one loop quiver with homogeneous potential $W=t^d$ are given by 
\[ \Omega_n= \begin{cases} 
              \LL^{-1/2}\bigl(1- [\mu_d]\bigr) &\mbox{ for } n=1, \\ 0 & \mbox{ else }.
             \end{cases}
\]
\end{Theorem}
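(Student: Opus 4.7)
The plan is to combine the calculation just performed with Lemma \ref{wallcrossing} and then invert the defining formula for the $\Omega_n$. Substituting the identity $\LL^{n/2}\int_{\AA^n_k}\phi_{f_n}=\sigma^n(1-[\mu_d])$ established immediately above into Lemma \ref{wallcrossing} will give
$$\Phi_W(\LL T)/\Phi_W(T)=\sum_{n\ge 0}\sigma^n(1-[\mu_d])\,T^n.$$

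I would then recognise the right-hand side as a value of $\Sym$. The element $T=[\Spec(k)\to 1]\in\KK^{\muu}(\Sta/\mathbb{N})$ is a line element: indeed $\sigma^n(T)=[BS_n\to n]=T^n$, using $[BS_n]=[\Spec(k)/S_n]=1$ in $\KK(\Sta/k)$, which is the crucial input recalled in the proof of Proposition \ref{Kaiso3}. Consequently $\sigma^n\bigl((1-[\mu_d])T\bigr)=\sigma^n(1-[\mu_d])\,T^n$, so the generating series above equals $\Sym\bigl((1-[\mu_d])T\bigr)$.

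On the other hand, combining the defining formula $\Phi_W(T)=\Sym\bigl(\tfrac{1}{\LL^{1/2}-\LL^{-1/2}}\sum_{n\ge 1}\Omega_n T^n\bigr)$ with the multiplicativity of $\Sym$ and the identity $(\LL^n-1)/(\LL^{1/2}-\LL^{-1/2})=\LL^{1/2}[n]$ will yield
$$\Phi_W(\LL T)/\Phi_W(T)=\Sym\Bigl(\sum_{n\ge 1}\LL^{1/2}[n]\,\Omega_n\,T^n\Bigr).$$
Invoking the injectivity of $\Sym\colon F^1R\to 1+F^1R$ in any complete filtered $\lambda$-ring (the lemma following the definition of $\Sym$ in section 3), comparing the arguments of the two $\Sym$ expressions will produce the identity
$$\sum_{n\ge 1}\LL^{1/2}[n]\,\Omega_n\,T^n=(1-[\mu_d])\,T.$$

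Finally I would read off the invariants coefficient by coefficient. The $T$-coefficient, together with $[1]=1$, immediately gives $\Omega_1=\LL^{-1/2}(1-[\mu_d])$. For $n\ge 2$ the relation $\LL^{1/2}[n]\,\Omega_n=0$ forces $\Omega_n=0$, because $[n]$ is a factor of $[\Gl_k(n)]=\LL^{{n\choose 2}}[n]!(\LL-1)^n$ (Example \ref{Gln}) and is therefore invertible in $\KK^{\muu}(\Sta/\mathbb{N})$, while $\LL^{1/2}$ is invertible by construction. The main delicate points are these invertibility checks (especially confirming that $T$ is a genuine line element, via $[BS_n]=1$) together with the uniqueness claim needed to compare coefficients; beyond these, the remaining argument is a formal manipulation of $\lambda$-ring identities.
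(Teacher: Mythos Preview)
Your proof is correct and takes essentially the same approach as the paper's: rewrite $\Phi_W(\LL T)/\Phi_W(T)$ as $\Sym\bigl((1-[\mu_d])T\bigr)$ using Lemma \ref{wallcrossing} and the preceding calculation, express the left-hand side via the defining formula for the $\Omega_n$, and compare the arguments of $\Sym$. You supply more detail than the paper (the line-element property of $T$, the invertibility of $[n]$ via Example \ref{Gln}, the injectivity of $\Sym$), whereas the paper simply writes the chain of equalities and concludes ``by comparing coefficients''; one small caveat is that in a non-special $\lambda$-ring the step from ``$T$ is a line element'' to $\sigma^n(aT)=\sigma^n(a)T^n$ really rests on the $\mathbb{N}$-graded nature of the construction rather than on the line-element property alone, but the paper uses this identity without comment as well.
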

\begin{proof}
 Using the previous lemma and our calculations,  we finally get
\begin{eqnarray*} \Sym\Biggl( \sum_{n\ge 1} \frac{\LL^n - 1}{\LL^{1/2}-\LL^{-1/2}} \Omega_n T^n\Biggr) & = & \Phi_W(\LL T)/\Phi_W(T) \\ & = &  \sum_{n\ge 0} \sigma^n \bigl(1 -[\mu_d]\bigr)T^n \\ & = &\Sym \Bigl( \big(1 -[\mu_d]\bigr)T \Bigr). 
\end{eqnarray*}
and the theorem follows by comparing coefficients.
\end{proof}
Let us now come back to the case of general potentials $0\neq W\in k[t]$ and let $W'=c\prod_{i_1}^r(t-a_i)^{d_i-1}$ be the prime decomposition of $W'$ into linear factors with $c\in k^\times$, $1< d_i\in \mathbb{N}$ and $a_i\in k$ for all $1\le i\le r$. Hence, the Grothendieck group of the abelian category of sheaves supported on the zero scheme of $W'$ is $\mathbb{Z}^r$ with effective cone $\mathbb{N}^r$ spanned by the classes $\vec e_i$ of skyscraper sheaves of length one supported at $a_i\in \AA^1_k$ $(i\in \{1,\ldots,r\})$ or equivalently by one-dimensional representations with eigenvector $a_i$. The monoid homomorphism $\dim: \Crit(\WW) \longrightarrow \mathbb{N}$ factorizes as $\dim: \Crit(\WW) \xrightarrow{\;\cl\;} \mathbb{N}^r \xrightarrow{\;+\;} \mathbb{N}$  with $\cl(V)$ being the class of the representation of $V$ in the Grothendieck group. This allows us to define refined Donaldson--Thomas invariants by means of 
\[ \int_{\cl} \phi_\WW = \Sym \Bigl( \frac{1}{\LL^{1/2} - \LL^{-1/2}} \sum_{\vec n=(n_1,\ldots,n_r) \in \mathbb{N}\setminus 0} \Omega_{\vec n}\, T_1^{n_1}\cdots T_r^{n_r} \Bigr) =: \Phi_W(T_1,\ldots,T_r) \]
in $\KK^{\muu}(\Sta/\mathbb{N}^r)$, where we denote the line element $[\Spec(k) \rightarrow \vec e_i]$ by $T_i$. For any $1\le i\le r$ let us write $\MM^{(i)}$  for the substack $\cl^{-1}(\mathbb{N}\vec e_i)$ parametrizing sheaves on $a_i$. Obviously 
\[\MM\cong \prod_{i=1}^r \MM^{(i)} \] 
by taking direct sums. However, to compute the motivic vanishing cycle $\phi_\WW$ we use different embeddings on each side. Indeed, the vanishing cycle on the left hand side restricted to $\cl^{-1}(n_1,\ldots,n_r)=:\MM_{\vec n}$ is computed by means of the embedding $\MM_{\vec n}\subset \Mat_k(N,N)/ \Gl_k(N)$ with $N=n_1+\ldots + n_r$, whereas on the right hand side we use the embedding 
\[ \prod_{i=1}^r \MM_{n_i \vec e_i} \subset \prod_{i=1}^r \Mat_k(n_i,n_i) /\Gl_k(n_i). \]
However, one can prove the product formula 
\begin{equation} \label{prodformula} \int_{\cl} \phi_\WW = \prod_{i=1}^r \int_{\cl} \phi_\WW|_{\MM^{(i)}}. \end{equation}
Indeed, as there are neither extensions nor morphisms between representations $V \in \MM^{(i)}$ and $V'\in \MM^{(j)}$ for $i\neq j$, we have 
\[ [\MM \rightarrow \MM ] = [\MM^{(1)} \rightarrow \MM] \star \ldots \star [\MM^{(r)}\rightarrow \MM] \] 
in the Hall algebra $\KK(\Sta/\MM)$. To prove formula (\ref{prodformula}) we apply the refined algebra homomorphism $\int_{\cl}^{\phi_{\WW}}$ from the Hall algebra to $\KK^{\muu}(\Sta/\mathbb{N}^r)$ (see \cite{Meinhardt3}). \\

For the computation of the right hand side in the product formula (\ref{prodformula}) we make use of the following lemma.
\begin{Lemma}
For any $i\in \{1,\ldots,r\}$ let $d_i$ be defined as above. Then
\[ \int_{\cl} \phi_W|_{\MM^{(i)}} = \Phi_{t^{d_i}}(T_i) = \Sym \biggl(\frac{1-[\mu_{d_i}]}{\LL-1}T_i\biggr).\] 
\end{Lemma}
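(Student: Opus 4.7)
The plan is to reduce, in two steps, to Theorem \ref{theorem2} applied to the monomial potential $W=t^{d_i}$.

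\textbf{Step 1 (translation).} The $\Gl_k(n)$-equivariant automorphism $A\mapsto A-a_iI$ of $\Mat_k(n,n)$ carries the substack $\MM_{n\vec e_i}$ (representations of spectrum $\{a_i\}$ annihilated by $W'$) onto the nilpotent critical locus of $\tilde W_i(t):=W(t+a_i)-W(a_i)$: for a matrix $A$ with all eigenvalues $0$, the factors $(A+(a_i-a_j)I)^{d_j-1}$ appearing in $\tilde W_i'(A)$ are invertible for $j\neq i$, so the condition $\tilde W_i'(A)=0$ reduces to $A^{d_i-1}=0$. Under this automorphism $W_n$ pulls back to $\tr\tilde W_i(A)+nW(a_i)$, and adding the constant $nW(a_i)$ does not affect the motivic vanishing cycle. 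We are therefore reduced to the case $a_i=0$ with $\tilde W_i(t)=t^{d_i}u(t)$ and $u(0)=c_{d_i}\neq 0$.

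\textbf{Step 2 (reduction to the monomial).} Since $u(0)\neq 0$, $\Char k=0$ and $k=\bar k$, Hensel's lemma produces a power series $v(\tau)\in k[[\tau]]$ with $v(\tau)^{d_i}=u(\tau)/c_{d_i}$, realising $\tilde W_i(\tau)=c_{d_i}\bigl(\tau v(\tau)\bigr)^{d_i}$ on an étale neighbourhood of $0\in\AA^1_k$. Lifting via functional calculus, the $\Gl_k(n)$-equivariant map $\Psi:A\mapsto A\cdot v(A)$ is defined and étale on a $\Gl_k(n)$-stable open neighbourhood $\mathcal U$ of $\MM_{n\vec e_i}$ in $\Mat_k(n,n)$: the power series $v$ truncates after finitely many terms on nilpotent matrices of bounded index, and one passes to an étale cover to define the $d_i$-th root on the rest of $\mathcal U$. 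By construction $\tilde W_{i,n}\circ\Psi=c_{d_i}\tr((-)^{d_i})$ on $\mathcal U$. Since the motivic vanishing cycle is supported on the critical locus (Proposition \ref{vanishing} (1)) and commutes with smooth, in particular étale, pullback (Proposition \ref{vanishing} (3)), one obtains
\[\phi_{\tilde W_{i,n}}|_{\MM_{n\vec e_i}}=\phi_{c_{d_i}\tr((-)^{d_i})}|_{\MM_{n\vec e_i}}.\]
The remaining scalar $c_{d_i}$ is absorbed by the $\Gl_k(n)$-equivariant rescaling $A\mapsto\lambda A$ with $\lambda^{d_i}=c_{d_i}^{-1}$, which intertwines $c_{d_i}\tr((-)^{d_i})$ with $\tr((-)^{d_i})$, so we have reduced entirely to $W=t^{d_i}$.

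\textbf{Conclusion.} The two reductions identify $\int_\cl\phi_W|_{\MM^{(i)}}$ with $\Phi_{t^{d_i}}(T_i)$. Theorem \ref{theorem2} then yields $\Omega_1(t^{d_i})=\LL^{-1/2}(1-[\mu_{d_i}])$ and $\Omega_n(t^{d_i})=0$ for $n>1$, so
\[\int_\cl\phi_W|_{\MM^{(i)}}=\Sym\!\left(\frac{\LL^{-1/2}(1-[\mu_{d_i}])}{\LL^{1/2}-\LL^{-1/2}}\,T_i\right)=\Sym\!\left(\frac{1-[\mu_{d_i}]}{\LL-1}\,T_i\right),\]
using $\LL^{-1/2}/(\LL^{1/2}-\LL^{-1/2})=1/(\LL-1)$. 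The main obstacle is Step 2, i.e.\ the ``perturbation lemma'' alluded to in the introduction: constructing the $\Gl_k(n)$-equivariant étale coordinate change $\Psi$ rigorously on a genuine open neighbourhood of the critical locus (not merely at a formal level) and verifying motivic invariance of the vanishing cycle under it; the scalar reductions, the translation in Step 1, and the final invocation of Theorem \ref{theorem2} are then formal.
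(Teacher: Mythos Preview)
Your strategy --- translate to $a_i=0$, then change coordinates formally via $\theta(t)=t(a_0+a_1t+\cdots)$ with $\theta(t)^{d_i}=W(t)$ to reduce to the monomial, and finally apply Theorem \ref{theorem2} --- is exactly the paper's. The difference, and the gap you yourself flag, is in how the formal coordinate change is made rigorous. You try to promote $\Psi:A\mapsto Av(A)$ to an honest \'etale morphism and invoke Proposition \ref{vanishing} (3); but $v$ is only a formal power series, and the suggestion to ``pass to an \'etale cover to extract a $d_i$-th root of $u(A)/c_{d_i}$'' does not actually produce a smooth morphism of schemes to which that proposition applies (one would still have to exhibit a smooth source, a commuting root, and \'etaleness of $\Psi$, none of which is done). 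A minor point: your composition is reversed; with $\Psi(A)=Av(A)$ one has $\tilde W_{i,n}=c_{d_i}\tr((-)^{d_i})\circ\Psi$, not $\tilde W_{i,n}\circ\Psi=c_{d_i}\tr((-)^{d_i})$.

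The paper bypasses the problem by returning to the arc-space definition of $\phi$. Since $A^{m(d_i-1)}$ lies in the $m$-th power of the ideal of $C=\{A:A^{d_i-1}=0\}$, the series $\theta_n(A)=\sum_p a_pA^{p+1}$ converges on the formal completion $\hat C$, giving a formal automorphism with $\tilde W_n\circ\theta_n=W_n$ (here $\tilde W(t)=t^{d_i}$). The decisive observation is that every finite-length arc in $\Mat_k(n,n)$ with constant term in $C$ already factors through $\hat C$; hence $\theta_n$ induces genuine isomorphisms $\Lo_l(\Mat_k(n,n))|_C\xrightarrow{\sim}\Lo_l(\Mat_k(n,n))|_C$ intertwining $\Lo_l(W_n)$ with $\Lo_l(\tilde W_n)$ for every $l$, the generating series over $C$ agree, and $\int_C\phi_{W_n}=\int_C\phi_{\tilde W_n}$ follows by evaluating at $T=\infty$. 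In short, Proposition \ref{vanishing} (3) is the wrong tool: what is really being used is that $\phi_f$ depends only on the formal neighbourhood of its support, a fact that is transparent from the arc-space construction but not from the resolution formula.
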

\begin{proof}
Fix $i\in\{1,\ldots,r\}$ and $0<n\in\mathbb{N}$. By translation we can assume $a_i=0$ and $W=\sum_{p\ge 0}b_pt^{p+d_i}$ with $b_0\neq 0$ but $b_p=0$ for $p>>0$. Let $\tilde{W}(t)=t^{d_i}$. By solving the recursive equations
\[ \sum_{ m_0 + m_1 + \ldots = d_i \atop m_1 + 2m_2 + \ldots = p} { d_i \choose m_0, m_1, \ldots } a_0^{m_0}a_1^{m_1} \cdots = b_p \]
starting with a $d_i$-th root $a_0$ of $b_0$ one finds a power series $\theta(t)=t(a_0 + a_1t + \ldots )$ with $\theta(t)^{d_i}=\tilde{W}(\theta(t))=W(t)$. \\
The component $\MM_{n\vec e_i}$ is given by the quotient stack $C/\Gl(n)$ with $C=\{A\in \Mat_k(n,n) \mid A^{d_i-1}=0 \}$ as $dW_n(A):\Mat_k(n,n)\ni H \longmapsto \tr( H A^{d_i-1} G(A))\in \AA^1_k$ with $G\in k[t]$ having nonzero constant term. As the map $A\mapsto A^{m(d_i-1)}$ is in the $m$-th power of the defining ideal of $C$, we obtain a well defined isomorphism $\theta_n:\hat{C} \rightarrow \hat{C}$ on the formal neighborhood of $C$, i.e. the formal completion of $\Mat_k(n,n)$ along $C$. Moreover, the restriction of the function $\tilde{W}_n:A\mapsto \tr A^{d_i}$ to $\hat{C}$ composed with $\theta_n$ coincides with the restriction of $W_n$ to $\hat{C}$. As any arc of length $l$ in $\Mat_k(n,n)$ with constant term in $C$ is actually an arc of length $l$ in $\hat{C}$, $\theta_n$ induces isomorphisms $\Lo_l(\theta_n):\Lo_l(\Mat_k(n,n))|_C \xrightarrow{\:\sim\:} \Lo_l(\Mat_k(n,n))|_C$ with $\Lo_l(\tilde{W}_n)\circ \Lo_l(\theta_n)=\Lo_l(W_n)$ for any $l>0$. Hence $\int_C Z_{W_n}(T_i)=\int_C Z_{\tilde{W}_n}(T_i)$ with the notation from section 5 and $\int_C \phi_{W_n}=\int_C \phi_{\tilde{W}_n}$ follows proving the first equality. The second equality is a direct consequence of Theorem \ref{theorem2}.   
\end{proof}
Combining the lemma with the arguments before proves our main theorem.
\begin{Theorem}
For $W\in k[T]$ let $W'=c\prod_{i_1}^r(t-a_i)^{d_i-1}$ with $c\in k^\times$, $1< d_i\in \mathbb{N}$ and $a_i\in k$ for all $1\le i\le r$ as before. Define the Donaldson--Thomas invariants $\Omega_{\vec n}\in \KK^{\muu}(\Sta/k)$ for any $r$-tuple $(n_1,\ldots,n_r)\in \mathbb{N}^r$ as above. Then
\[ \Omega_{\vec n}= \begin{cases} 
              \LL^{-1/2}\bigl(1- [\mu_{d_i}]\bigr) &\mbox{ for } \vec n=\vec e_i\; (1\le i\le r),  \\ 0 & \mbox{ else }.
             \end{cases}
\]
In particular, $\Omega_{\vec n}$ is in the image of $\KK^{\muu}(\Var/k)[\LL^{-1/2}]$ in $\KK^{\muu}(\Sta/k)$.
\end{Theorem}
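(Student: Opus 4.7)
The plan is to combine the product formula \eqref{prodformula} with the preceding lemma and then invert $\Sym$ to read off the invariants $\Omega_{\vec n}$. By \eqref{prodformula},
\[ \Phi_W(T_1,\ldots,T_r) = \int_{\cl}\phi_\WW = \prod_{i=1}^r \int_{\cl}\phi_\WW|_{\MM^{(i)}}, \]
and the lemma identifies each factor with $\Sym\bigl(\tfrac{1-[\mu_{d_i}]}{\LL-1}\,T_i\bigr)$. Since $\Sym$ is a group homomorphism from the additive filtered group $R'$ to the multiplicative group $1+R'$ in the complete filtered $\lambda$-ring $R=\hat{\KK}^{\muu}(\Sta/\mathbb{N}^r)$ (with the filtration coming from $\mathbb{N}^r$), the product of $\Sym$'s becomes a $\Sym$ of a sum:
\[ \Phi_W(T_1,\ldots,T_r) = \Sym\biggl(\sum_{i=1}^r \frac{1-[\mu_{d_i}]}{\LL-1}\,T_i\biggr). \]

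Next I would compare this with the defining expression
\[ \Phi_W(T_1,\ldots,T_r) = \Sym\biggl(\frac{1}{\LL^{1/2}-\LL^{-1/2}} \sum_{\vec n\neq 0} \Omega_{\vec n}\,T_1^{n_1}\cdots T_r^{n_r}\biggr). \]
Injectivity of $\Sym:R'\to 1+R'$ (proven in the lemma preceding the construction of $\Sym$) lets me equate the arguments. Using the identity $\LL-1=\LL^{1/2}(\LL^{1/2}-\LL^{-1/2})$ to rewrite $\frac{1-[\mu_{d_i}]}{\LL-1}=\frac{\LL^{-1/2}(1-[\mu_{d_i}])}{\LL^{1/2}-\LL^{-1/2}}$ and clearing the common denominator yields the generating-function identity
\[ \sum_{\vec n\neq 0}\Omega_{\vec n}\,T_1^{n_1}\cdots T_r^{n_r} \;=\; \sum_{i=1}^r \LL^{-1/2}\bigl(1-[\mu_{d_i}]\bigr)\,T_i. \]
Matching coefficients of the line elements $T_1^{n_1}\cdots T_r^{n_r}$ gives $\Omega_{\vec e_i}=\LL^{-1/2}(1-[\mu_{d_i}])$ and $\Omega_{\vec n}=0$ for all other $\vec n\in\mathbb{N}^r$, and since each nonzero $\Omega_{\vec e_i}$ already lies in the image of $\KK^{\muu}(\Var/k)[\LL^{-1/2}]$, the ``in particular'' statement is immediate.

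The only subtle point is to ensure that the formal manipulations with $\Sym$ and with the line elements $T_i$ are legitimate inside the completed $\lambda$-ring $\hat{\KK}^{\muu}(\Sta/\mathbb{N}^r)$; here one uses Remark \ref{relative} to place the $\lambda$-structure on the relative Grothendieck group over $\Base=\mathbb{N}^r$, together with the fact that the $T_i$'s are line elements concentrated in the positive part of the filtration, so every sum appearing above converges in the filtration topology. All the remaining steps are formal, so the only real content of the proof is the combination of \eqref{prodformula} and the lemma that was already established; no further geometric input is needed.
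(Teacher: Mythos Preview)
Your proof is correct and follows exactly the approach the paper intends: the paper's own proof is the single sentence ``Combining the lemma with the arguments before proves our main theorem,'' and what you have written is precisely the spelling-out of that combination (product formula \eqref{prodformula}, the preceding lemma, additivity and injectivity of $\Sym$, then matching coefficients). One small terminological correction: the $\lambda$-ring you work in is $\hat{\KK}^{\muu}(\Sta/\mathbb{N}^r)$ with $\mathbb{N}^r$ playing the role of the monoid $\SS$ (not the base $\Base$ of Remark~\ref{relative}); this does not affect the argument.
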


\nocite{JoyceHGF}
\nocite{JoyceCF}

\bibliographystyle{plain}
\bibliography{Literatur}

\vfill
\textsc{\small B. Davison: Mathematical Institute, University of Oxford, 24-29 St Giles', Oxford OX1 3LB, England}\\
\textit{\small E-mail address:} \texttt{\small Ben.Davison@maths.ox.ac.uk}\\
\\

\textsc{\small S. Meinhardt: Mathematisches Institut, Universit\"at Bonn, Endenicher Allee 60, 53115 Bonn, Germany}\\
\textit{\small E-mail address:} \texttt{\small sven@math.uni-bonn.de}\\

\end{document}